\documentclass[12pt,reqno]{amsart}

\usepackage{amsmath,amssymb,amsthm}
\usepackage[margin=1.2in]{geometry}
\usepackage{enumitem,hyperref}
\usepackage{amsrefs}
\newtheorem{theorem}{Theorem}[section]
\newtheorem{proposition}[theorem]{Proposition}
\newtheorem{lemma}[theorem]{Lemma}
\newtheorem{corollary}[theorem]{Corollary}
\newtheorem{conjecture}[theorem]{Conjecture}
\theoremstyle{definition}
\newtheorem{definition}[theorem]{Definition}
\newtheorem{convention}[theorem]{Convention}
\theoremstyle{remark}
\newtheorem{remark}[theorem]{Remark}

\newcommand{\CC}{\mathbb{C}}
\newcommand{\ZZ}{\mathbb{Z}}
\newcommand{\RR}{\mathbb{R}}
\newcommand{\PP}{\mathbb{P}}
\newcommand{\Hom}{\operatorname{Hom}}
\newcommand{\Ext}{\operatorname{Ext}}
\newcommand{\RHom}{\operatorname{RHom}}
\newcommand{\Stab}{\operatorname{Stab}}
\newcommand{\HH}{\operatorname{HH}}
\newcommand{\Db}{\mathrm{D}^{\mathrm{b}}}
\newcommand{\AX}{\mathcal{A}_X}
\newcommand{\Mukai}{\widetilde{H}(\AX,\ZZ)}
\newcommand{\hb}{\hbar}
\newcommand{\ie}{i.e.}

\newcommand{\PI}{P$_{\mathrm{I}}$}
\providecommand{\QQ}{\mathbb{Q}}

\begin{document}

\title[K3 Atoms and the BPS determinant line]{K3 atoms of the cubic fourfold\\ and the BPS structure of the Painlev\'e~I determinant line}
\author{Mark Raugas}
\address{Pacific Northwest National Laboratory, 1100 Dexter Ave N,
Seattle, WA 98109}
\email{raugas@pnnl.gov}
\subjclass[2020]{Primary 14F08; Secondary 14N35, 34M40, 34M55, 34M60, 58J52}
\keywords{Bridgeland stability, semiorthogonal decomposition, Hodge
atom, Kuznetsov component, noncommutative minimal model program, BPS
structure, Painlev\'e I, cubic oscillator, exact WKB, spectral
determinant, tau function, Stokes phenomenon}

\begin{abstract}
The compatibility of the semiorthogonal decomposition of a cubic
fourfold with Bridgeland stability beyond a generic point is the
missing ingredient in the conjectured dynamical protection of the K3
atom $\AX$. We analyze it in the exactly solvable models of the
noncommutative minimal model program. In the uncoupled Fano models
($\PP^1$; $\PP^1 \times \PP^1$ at resonance) the quantum cohomology
path exits the geometric chamber at an explicit finite time, enters
the selection region of the gluing, and never leaves; perturbing the
resonance shows the $\varepsilon$-crossover is
not a wall. The $A_2$ quiver provides a coupled counterpart.  We
examine the deformed cubic oscillator, Painlev\'e~I, and
formulate a tau-JLO dictionary as a determinant identification and establish
its perturbative layer, including computing
closed-form quantum periods through $Z_4$,
all-orders flatness with exact curvature the tau-divisor current,
identifying it with the zeta determinant's zero divisor along a
Painlev\'e~I trajectory. We close by conjecturing that the
determinant's nonperturbative jumps are automorphisms of
the underlying Joyce structure.
\end{abstract}

\maketitle

\section{Introduction}\label{sec:intro}

The rationality problem for the cubic fourfold $X \subset \PP^5$ was
recently resolved for the very general member by Katzarkov,
Kontsevich, Pantev, and Yu \cite{KKPY}, who extract from the Stokes
data of the quantum connection a birational invariant, the $K3$
Hodge atom, supported on the rank-24 zero-eigenvalue block of
quantum multiplication by $c_1(X)$. In \cite{R} we proposed a physical
reading of this invariant: the semiorthogonal decomposition
\[
  \Db(X) = \bigl\langle\, \AX,\; \mathcal{O}_X,\; \mathcal{O}_X(1),\;
  \mathcal{O}_X(2)\,\bigr\rangle
\]
acts as a dynamical selection rule, the full acyclicity of
$\RHom(E, A)$ for exceptional $E$ and $A \in \AX$ forbidding not
merely a net BPS index but any stable tunneling trajectory between the
vacua of birational surgery and the $K3$ sector, so that the atom's
spectrum constitutes a protected quantum phase
\cite[Conjecture~8.1]{R}.

However,
the Kontsevich--Soibelman formula \cite{KS} guarantees
invariance of the ordered product of Stokes automorphisms, but does
not by itself prevent wall-crossing from creating binding BPS states
at special loci. So, to prove the above conjecture,
one would need to demonstrate the compatibility of the
semiorthogonal
decomposition with all stability conditions in a connected component
of $\Stab(\Db(X))$.

Two developments advance this question:

\begin{enumerate}
\item The noncommutative minimal model program of Halpern-Leistner \cite{HL},
with the quasi-convergence formalism of
Halpern-Leistner--Jiang--Robotis \cite{HLJR} and the augmented
stability space of Halpern-Leistner--Robotis \cite{RobAug}, provides
the mechanism by which the quantum differential equation selects paths
in $\Stab$ whose limits encode semiorthogonal decompositions.  That
program has been carried out for blowup surfaces by Karube \cite{Kar}
and extended to Grassmannians, quadrics,
and cubic threefolds and fourfolds by Karube--Robotis--Zuliani
\cite{KRZ}.
\item The stability theory of the Kuznetsov component
is itself well-developed: Bayer--Lahoz--Macr\`i--Stellari \cite{BLMS}
construct stability conditions on $\AX$ with support controlled by the
$A_2$ lattice.  Wall-crossing inside $\AX$ acts birationally on
hyperk\"ahler moduli \cite{BM14} while counting invariants are
stability-independent by the CY2 theorem of Toda \cite{Toda}.
\end{enumerate}

In this work,
we consider the locus of glued stability conditions in the sense of
Collins--Polishchuk \cite{CP}, whose phase windows separate, and
examine whether the quantum cohomology path enters the selection
region of the semiorthogonal decomposition \emph{at finite time} or
merely converges to that chamber's boundary in a
limit. Quasi-convergence is an asymptotic condition on
Harder--Narasimhan filtrations and is compatible with either behavior;
but the protection statement of \cite{R} is a statement about the
stable spectrum at interior points, and it holds on the selection
region, not on its closure. Equally undetermined by the general theory
is the fate of a degenerate spectral block along the path: whether the
flow can resolve, deform, or couple to the interior of an atom.

The purpose of this paper is to settle these questions completely in
the smallest models where they arise, and to extract from the
solutions two structural principles, provide
a terminality proposition for the
cubic fourfold, and map out an analytic continuation of the program.

\subsection*{The models}
For $\PP^1$ we compute the quantum path explicitly
(\S\ref{sec:P1}). Along any ray avoiding the finitely many Stokes
directions (due to quasi-convergence, phase slopes must separate) the
lifted phase gap between the exceptional objects grows linearly, and
the path crosses a single wall at an explicit finite time $t_0$.
The wall is the destabilization of the binding objects: the
torsion sheaves, whose classes are the unique classes mixing the two
sectors, become strictly semistable with Jordan--H\"older factors the
two exceptional objects. Past $t_0$ the path lies in the selection
region, which we verify is the entire unbounded region beyond the
wall: no new stable object ever forms. Dynamical protection is thus
\emph{realized along the flow}: the binding BPS states exist at
small $t$ and decouple at $t_0$.

For $\PP^1\times\PP^1$ at the resonance $q_1 = q_2$
(\S\ref{sec:product}), the factor-swap symmetry of the mirror forces
the central charges of the two eigenvalue-zero objects to coincide:
the block is exactly phase-locked for all time,
with central charge vanishing on the difference class
$\delta = [\mathcal{O}(1,0)] - [\mathcal{O}(0,1)]$. In particular, no ray can
resolve the block's interior. The path enters the three-block
selection region at finite time through walls of multiplicity two
and the two curve-class binding channels end simultaneously
(a degeneracy inherited from the resonance).  
The block persists to the limit
as a polynomially massless factor against exponentially separated
exceptional sectors. This is the minimal model of an atom the flow
cannot probe.

\subsection*{Two clocks: Phase and Mass}
Perturbing the resonance, $q_1 = qe^{i\varepsilon}$
(\S\ref{sec:epsilon}), splits the block's eigenvalues by
$\Delta\lambda = 2im\varepsilon + O(\varepsilon^2)$.
The internal separation is governed
by two parameters: a phase clock at rate
$\propto \varepsilon\cos\theta$ and a mass clock at rate
$\propto \varepsilon\sin\theta$, which are complementary projections of the
splitting onto the chosen ray.  The resulting crossover, occurring at
$t_1 \sim \pi/(2m\varepsilon|\cos\theta|)$, is not a wall.
Because the block's factors are orthogonal, there is no
gluing inequality and no binding class: the stable spectrum is
constant past the coarse entry time, independent of $\varepsilon$.

What changes at $t_1$ is the multi-scale type of the limit.  The
augmented boundary point acquires a second level and as
$\varepsilon \to 0$ it slides into the deeper three-block region. We
are then led to distinguish two pictures:

\begin{enumerate}
  \item the \emph{chamber layer} (walls, binding morphisms, changes
of stable spectrum at finite time)
\item the \emph{region layer} (scale crossovers, level structure, the classification of limits at infinity)
\end{enumerate}

These layers are disjoint, with all the
$\varepsilon$-dependence residing in the second. Keeping them
separate is, we will argue, the correct organization of the
compatibility problem for the fourfold: the chamber theorem sought
there is a statement in the first layer, while the convergence of
the quantum path is a statement in the second.

\subsection*{Terminality for the cubic fourfold}
The perturbed model clarifies what an analogous deformation
would need to be for the cubic fourfold: a direction, in moduli or in
quantum parameters, splitting the rank-24 zero-eigenvalue block into
sub-blocks the flow could then separate. Both preconditions fail
(\S\ref{sec:cubic}):

\begin{enumerate}
  \item Spectrally, the block's rank is constant over the
twenty-dimensional moduli of smooth cubics.  Small quantum
cohomology is a deformation invariant, so the constancy follows from
the spectrum computation at a single point: there is no $\varepsilon$.
  \item Categorically, a phase-separation of the block would exhibit a
semiorthogonal decomposition of $\AX$; but $\AX$ is a connected
Calabi--Yau category, so its Serre functor $S \cong [2]$ converts any
semiorthogonal pair into an orthogonal one and
$\HH^0(\AX) = \CC$ forbids orthogonal splitting.
\end{enumerate}

The resonant model is
precisely the case $\HH^0 = \CC^2$ in which this argument fails,
which has a finer structure. For the cubic
fourfold the coarse region is \emph{terminal}
(Proposition~\ref{prop:terminality}), independent of the choice of path.

We were initially concerned that the rigidity of the atom over moduli renders the
global clauses of \cite[Conjecture~8.1]{R} vacuous, but the two models
above allow us to demonstrate instead that moduli-rigidity of the atom and
dynamical protection of its spectrum are the same fact viewed
from two distinct but related perspectives.

\subsection*{The coupled counterpart}
The Fano models above are, in a precise sense
(\S\ref{subsec:coupled-uncoupled}), the minimal \emph{uncoupled}
theory: the degenerate block carries vanishing pairing, no binding
morphisms, and Barnes-level Riemann--Hilbert data. The analytic portion
of the program requires its own test bed, and we develop it in
\S\ref{sec:A2}: the minimal \emph{coupled} two-class structure, the
$A_2$ quiver category, whose Riemann--Hilbert problem is solved by the
monodromy of the deformed cubic oscillator \cite{BrMa} and whose tau
function is Painlev\'e I.

There, the entire dictionary (periods,
saddle trajectories, the pentagon as the Delabaere--Dillinger--Pham
formula, the deformation datum living on the spectral elliptic curve,
the tau function and its zeros) is explicitly computable, and we
formulate a determinant-line program identifying the tau function with
zeta-regularized spectral data of the operator family.
Section~\ref{sec:detline} computes quantum periods in closed form through order $\hb^3$,
provides a demonstration of all-orders flatness, and examines the curvature current and
elementary factors.

We conduct numerical studies on the symmetric slice
(\S\S\ref{subsec:det-numerics}--\ref{subsec:det-divisor}): the
elementary-factor constant
evaluates to the logarithm of the golden ratio, the
$\mu_5$-assembly of Sibuya-normalised determinants is canonically
normalised, and the tau-divisor identification is verified
nonperturbatively along a Painlev\'e~I trajectory.
The coupled and uncoupled cases
together exhaust the local models of a two-class interaction, and they
calibrate complementary halves of the program: the chamber dictionary
(Fano models) and the analytic dictionary ($A_2$).

\subsection*{What this paper does not do}
We do not prove the chamber theorem for the cubic fourfold. The
results of \cite{KRZ} establish quasi-convergence of lifted quantum
central charge paths for cubic fourfolds not containing a plane and,
for a twisted choice of fundamental solution, produce a path that is
glued at every time and \emph{geometric} at one end
\cite[Thms.~4.6, 4.7 and Thm.~2.72]{KRZ}; what is missing is the
wall-and-chamber analysis between the endpoints.  That is the fourfold
analogue of tracking the torsion sheaves on $\PP^1$.

Instead, we formulate the
expected statement, with the wall structure it requires and a
spectral constraint (the centroid obstruction,
Lemma~\ref{lem:centroid}) that fixes its correct mutation, as
Conjecture~\ref{conj:chamber}; the threefold version,
Conjecture~\ref{conj:threefold}, is stated in the expectation that
it is within reach of current technology.

The analytic study of \S\ref{sec:A2} is carried out here through
its perturbative and numerically checkable layer
(\S\ref{sec:detline}); what remains is an investigation into
the smooth Quillen (spectral-flow and heat-kernel) transfer and the identification
with the JLO cochain.  This is expected to be a more substantial effort
and is planned for a sequel, whose
nonperturbative target is stated as
Conjecture~\ref{conj:jumps}.

That work is our primary
interest, furthering the connections
between algebraic and analytic approaches for spaces of interest
to physical mathematics.

\subsection*{Organization}
Section~\ref{sec:conventions} fixes conventions for quasi-convergent
paths, glued stability conditions, and the saddle-point asymptotics of
quantum central charges, and proves the Sectoriality Lemma
(Lemma~\ref{lem:sectoriality}) used throughout.
Section~\ref{sec:P1} treats $\PP^1$; Section~\ref{sec:product} the
resonant $\PP^1\times\PP^1$; Section~\ref{sec:epsilon} the
$\varepsilon$-deformation, the two clocks, and the wall-splitting of
the resonant walls. Section~\ref{sec:cubic} assembles the terminality
argument for the cubic fourfold, states the chamber theorem for the
cubic threefold as the nearest-term precise conjecture
(Conjecture~\ref{conj:threefold}), formulates the fourfold statement
with its expected wall structure, and records the phrasing
refinements the models show to be necessary.
Section~\ref{sec:A2} develops the coupled case: the $A_2$ structure,
the deformed cubic oscillator, Painlev\'e I, and the determinant
study.
Section~\ref{sec:detline} continues the determinant
to the perturbative level: the first $\hb$-correction of
the determinant--period dictionary in closed form, flatness to all
orders, the exact curvature current, and the reduction of the
elementary-factor matching to a single constant, each statement
verified numerically on the symmetric slice, together with the
tau-divisor comparison along a Painlev\'e~I trajectory.
Section~\ref{sec:outlook} orders the next steps of both programs
and states the nonperturbative jump conjecture
(Conjecture~\ref{conj:jumps})

Readers whose primary interest is the
analytic program can proceed directly to
\S\S\ref{sec:A2}--\ref{sec:detline}, which are self-contained given
the coupled/uncoupled dichotomy of \S\ref{subsec:coupled-uncoupled}.

\section{Conventions and preliminaries}\label{sec:conventions}

Let $X$ denote a smooth projective Fano variety over $\CC$
with bounded derived category $\Db(X)$, and $\Stab(\Db(X))$ the space
of full numerical Bridgeland stability conditions \cite{Bri07}.

For
$\sigma = (Z,\mathcal{P})$ we write $\varphi_\sigma(E) \in \RR$ for
the (lifted) phase of a $\sigma$-semistable object $E$, so that
$Z(E) \in \RR_{>0}\, e^{i\pi\varphi_\sigma(E)}$ and
$\varphi_\sigma(E[1]) = \varphi_\sigma(E) + 1$. This section fixes
the vocabulary; readers familiar with \cite{Bri07,Mac07,HLJR} can
proceed to \S\ref{subsec:paths}.

\subsection{Hearts, stability conditions, and quasi-convergence}
\label{subsec:background}

A \emph{heart} (of a bounded t-structure) in a triangulated category
$\mathcal{D}$ is a full additive subcategory
$\mathcal{A} \subset \mathcal{D}$ such that
$\Hom(A, A'[k]) = 0$ for all $A, A' \in \mathcal{A}$ and $k < 0$,
and such that every object of $\mathcal{D}$ admits a finite
filtration by exact triangles whose graded pieces are shifts
$A_i[k_i]$ with $A_i \in \mathcal{A}$ and
$k_1 > \cdots > k_r$. A heart is an abelian category, with short
exact sequences the triangles of $\mathcal{D}$ having all three
vertices in $\mathcal{A}$; the standard example is
$\operatorname{Coh}(X) \subset \Db(X)$. For objects
$S_1, \dots, S_r$ we write
$\langle S_1, \dots, S_r\rangle_{\mathrm{ext}}$ for their extension
closure, the smallest full subcategory containing the $S_i$ and
closed under extensions.

A \emph{stability condition} $\sigma = (Z, \mathcal{P})$ on
$\mathcal{D}$ \cite{Bri07} consists of a group homomorphism $Z$, where
$Z \colon K_{\mathrm{num}}(\mathcal{D}) \to \CC$ (the
\emph{central charge}) and a \emph{slicing}: full additive
subcategories $\mathcal{P}(\varphi) \subset \mathcal{D}$ for
$\varphi \in \RR$, satisfying
$\mathcal{P}(\varphi + 1) = \mathcal{P}(\varphi)[1]$,
$\Hom(\mathcal{P}(\varphi_1), \mathcal{P}(\varphi_2)) = 0$ for
$\varphi_1 > \varphi_2$, the compatibility
$Z(E) \in \RR_{>0}\, e^{i\pi\varphi}$ for
$0 \neq E \in \mathcal{P}(\varphi)$, and the requirement that
every object have a finite Harder--Narasimhan (HN) filtration with
graded pieces in $\mathcal{P}(\varphi_1), \dots,
\mathcal{P}(\varphi_r)$, $\varphi_1 > \cdots > \varphi_r$.
Objects of $\mathcal{P}(\varphi)$ are the \emph{semistables} of
phase $\varphi$; the simple objects of the (quasi-abelian) category
$\mathcal{P}(\varphi)$ are the \emph{stables}, and every
semistable has a Jordan--H\"older (JH) filtration by stables of the
same phase.

The subcategory
$\mathcal{A} = \mathcal{P}((0,1])$ is a heart, and conversely a
pair $(Z, \mathcal{A})$ with the HN property determines $\sigma$;
crossing a wall of the chamber structure replaces $\mathcal{A}$ by a
tilt at a torsion pair \cite{HRS}. All stability conditions in this
paper satisfy the \emph{support property}: for a fixed norm
$\lVert\cdot\rVert$ on
$K_{\mathrm{num}} \otimes \RR$ there is $c > 0$ with
$|Z(E)| \geq c\,\lVert E \rVert$ for all semistable $E$; this
guarantees a locally finite wall-and-chamber structure and the
deformation theory of \cite{Bri07}. The universal cover
$\widetilde{GL}{}^+_2(\RR)$ of orientation-preserving linear maps
acts on $\Stab(\mathcal{D})$ by post-composition on $Z$ together
with the corresponding relabeling of the slicing; the action
preserves the set of semistable objects. A stability condition on
$\Db(X)$ is \emph{geometric} if all skyscraper sheaves
$\mathcal{O}_x$ are stable of one common phase.

An exceptional collection $E_1, \dots, E_n$
($\RHom(E_i, E_i) = \CC$, $\RHom(E_j, E_i) = 0$ for $j > i$) is
\emph{Ext-exceptional} if moreover
$\Hom^{\leq 0}(E_i, E_j) = 0$ for $i < j$; by Macr\`i
\cite{Mac07}, the extension closure of an Ext-exceptional collection
is a finite-length heart with simple objects the $E_i$. Since a
strong collection has all forward morphisms in degree zero, its
shifts $E_1[a_1], \dots, E_n[a_n]$ form an Ext-exceptional
collection exactly when $a_i - a_j \geq 1$ for every $i < j$ with
$\Hom(E_i, E_j) \neq 0$; this bookkeeping is the source of the
``gap $> 1$'' inequalities appearing throughout the paper
(Appendix~\ref{app:entry}).

Finally, a path $\sigma_t$ in $\Stab(\mathcal{D})$ is
\emph{quasi-convergent} \cite{HLJR} if, qualitatively: every
nonzero object acquires an eventual HN filtration, stable for large
$t$, whose graded pieces have well-defined asymptotic phase slopes
and controlled relative masses; grouping the pieces of each
filtration by slope then assembles, over all objects, into a
semiorthogonal decomposition of $\mathcal{D}$
\emph{induced} by the path. We use only these qualitative features
and refer to \cite{HLJR} for the precise definition.

\subsection{Quantum central charge paths}\label{subsec:paths}
Let $W \colon (\CC^\times)^n \to \CC$ be the Hori--Vafa mirror
superpotential of $X$ and let $\lambda_1,\dots,\lambda_r$ denote its
critical values, \ie\ the eigenvalues of quantum multiplication
$c_1(X)\star$ on $QH^\bullet(X)$. For a class $\gamma \in K_0(X)$
with associated integration cycle $\Gamma_\gamma$ we consider the
central charge
\begin{equation}\label{eq:thimble}
  Z_z(\gamma) \;=\; \int_{\Gamma_\gamma} e^{-W/z}\,
  \frac{dx_1}{x_1}\wedge\cdots\wedge\frac{dx_n}{x_n},
\end{equation}
restricted to the ray
\begin{equation}\label{eq:ray}
  z = e^{i\theta}/t, \qquad t \to +\infty,
\end{equation}
for a fixed angle $\theta$. When $\Gamma_\gamma$ is the Lefschetz
thimble through a nondegenerate critical point $p$ with critical
value $\lambda$, the stationary phase expansion in logarithmic
coordinates gives
\begin{equation}\label{eq:saddle}
  Z_t(\gamma) \;\sim\; (2\pi z)^{n/2}\,
  \bigl(\det \operatorname{Hess}_{\log} W(p)\bigr)^{-1/2}\,
  e^{-\lambda t e^{-i\theta}}\,\bigl(1 + O(1/t)\bigr),
\end{equation}
where $\operatorname{Hess}_{\log}$ denotes the Hessian in the
coordinates $\log x_i$, in which the holomorphic volume form of the
torus is translation-invariant. Two asymptotic invariants are used
constantly:
\begin{align}
  \log|Z_t(\gamma)| &= -t\,\mathrm{Re}\!\left(\lambda e^{-i\theta}\right)
    + O(\log t)
    &&\text{(\emph{mass rate})},\label{eq:massrate}\\
  \pi\,\varphi_t(\gamma) &= -t\,\mathrm{Im}\!\left(\lambda e^{-i\theta}\right)
    + O(1)
    &&\text{(\emph{phase slope})}.\label{eq:phaseslope}
\end{align}
The matching of thimbles to objects is fixed by the
$\widehat{\Gamma}$-integral structure of Iritani \cite{Iri09}; we
record the convention we use in each example, noting that the residual
ambiguity (a simultaneous relabeling) is absorbed into the choice of
ray. Lifts of these central charges to paths $\sigma_t$ in
$\Stab(\Db(X))$, quasi-convergent in the sense of \cite{HLJR}, exist
in all the examples we treat \cite{HL,KRZ}; our purpose is to locate
these paths relative to the chamber structure of $\Stab$, which is
finer information than quasi-convergence.

\subsection{Glued chambers, sectoriality, binding objects}
\label{subsec:glued}
Given a semiorthogonal decomposition
$\Db(X) = \langle \mathcal{D}_1,\dots,\mathcal{D}_\ell\rangle$ (our
convention: $\Hom(\mathcal{D}_j, \mathcal{D}_i) = 0$ for $j > i$) and
stability conditions $\sigma_i$ on the factors, Collins--Polishchuk
\cite{CP} construct, under the vanishing
$\Hom^{\leq 0}(\mathcal{A}_i,\mathcal{A}_j) = 0$ for $i<j$ between
the hearts, a glued stability condition with heart the extension
closure $\langle \mathcal{A}_1,\dots,\mathcal{A}_\ell\rangle$ and
central charge $\bigoplus_i Z_i$. We call the locus of stability
conditions arising this way (and their
$\widetilde{GL}^+_2(\RR)$-translates) the \emph{glued region} of the
decomposition.
When the factors involved are generated by an exceptional
collection, the glued hearts are the Ext-exceptional hearts of
Macr\`i \cite{Mac07}.

In contrast, the \emph{selection region} of the
decomposition is the open sublocus of the glued region on which the
phase windows of the factors are separated in the order of the
decomposition, in the sense of the hypothesis of
Lemma~\ref{lem:sectoriality} below. We call it the selection region
because the dynamical selection rule of \cite[Conj.~8.1]{R} holds
there by construction: sectoriality
(Definition~\ref{def:sectorial} below) is exactly the absence of
stable BPS trajectories between sectors.

We take care to avoid the word
``chamber'': in the models of
\S\S\ref{sec:P1}--\ref{sec:epsilon} the selection region is a
single chamber of the stable-spectrum wall-and-chamber structure,
but for the cubic fourfold it is subdivided by the internal walls of
the Kuznetsov factor \cite{BM14}, across which stability inside
$\AX$ changes while sectoriality persists. The distinction between
the two loci is important to keep in view: the glued region is strictly
larger than the selection region and in general contains \emph{geometric} stability
conditions, with interleaved phase windows.  This overlap is the
mechanism by which geometric stability conditions are constructed on
cubic fourfolds in \cite[Thm.~2.72]{KRZ}, and the geometric versus
non-geometric partition of the glued region is made explicit in
\cite[Fig.~1]{KRZ}.

\begin{definition}\label{def:sectorial}
A stability condition $\sigma$ on $\Db(X)$ is \emph{sectorial} for a
semiorthogonal decomposition
$\langle\mathcal{D}_1,\dots,\mathcal{D}_\ell\rangle$ if every
$\sigma$-stable object lies in some $\mathcal{D}_i$ up to shift, and
every $\sigma$-semistable object has all Jordan--H\"older factors in a
single $\mathcal{D}_i$ up to shift.
\end{definition}

On a measure-zero locus of the selection
region, \emph{semistable} objects of mixed class exist trivially
(direct sums of stables of aligned phase from different factors).
We say an object $E$
is a \emph{binding object} for the decomposition (relative to
$\sigma$) if $E$ is $\sigma$-stable and its class $[E]$ is not
supported on a single factor of
$K_0(\mathcal{D}_1)\oplus\cdots\oplus K_0(\mathcal{D}_\ell)$. Binding
objects are exactly the stable BPS states connecting sectors: a
sectorial $\sigma$ admits no such object.

\subsection{The Sectoriality Lemma}\label{subsec:sectoriality}
We examine the mechanism by which
phase separation forces sectoriality and apply that analysis to both $\PP^1$
(\S\ref{sec:P1}) and the product
(\S\S\ref{sec:product}--\ref{sec:epsilon}).  Then,
with a phase-gap supplied by the asymptotics of the quantum
path, we examine its application to the cubic fourfold (\S\ref{sec:cubic}).

In a semiorthogonal decomposition, every object carries a functorial
filtration whose subobject side is built from the \emph{later}
factors: iterating the projection triangles, $E$ admits a filtration
in which the $\mathcal{D}_\ell$-component appears as a sub, the
$\mathcal{D}_1$-component as a quotient; in a glued heart this
filtration is by subobjects.

\begin{lemma}[Sectoriality Lemma]\label{lem:sectoriality}
Let $\sigma$ be a stability condition in the glued region of
$\langle \mathcal{D}_1, \dots, \mathcal{D}_\ell\rangle$, glued from
$\sigma_i$ on the factors, and suppose the phase windows are separated
in the order of the decomposition: writing $W_i \subseteq (0,1]$ for
the set of phases of $\sigma_i$-semistable objects in the glued heart
$\mathcal{A}$, assume
\[
  \sup W_i \;<\; \inf W_j \qquad \text{for all } i < j
\]
(\ie\ $\sigma$ lies in the selection region).
Then $\sigma$ is sectorial (Definition~\ref{def:sectorial}): every
$\sigma$-stable object lies in a single $\mathcal{D}_i$ up to shift,
every $\sigma$-semistable object is an iterated extension of
$\sigma_i$-semistables of a single $i$ and equal phase, and $\sigma$
admits no binding objects.
\end{lemma}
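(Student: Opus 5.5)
Since every semistable object of $\sigma$ is a shift of a semistable object in the heart $\mathcal{A} = \mathcal{P}((0,1])$, it suffices to work inside the glued heart $\mathcal{A} = \langle \mathcal{A}_1,\dots,\mathcal{A}_\ell\rangle$. The plan is to show that each $\sigma$-semistable $E\in\mathcal{A}$ of phase $\varphi$ has trivial semiorthogonal filtration apart from a single factor. We will use the functorial filtration recalled just before the statement, under which, in the glued heart, $E$ has subobjects built from the later factors and quotients built from the earlier ones. The statement then follows from a see-saw argument on phases using the window separation $\sup W_i < \inf W_j$ for $i<j$.

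\emph{Step 1: the filtration by factors.} For $E\in\mathcal{A}$, iterating the projection triangles gives a chain of subobjects
\[0=F_{\ell+1}\subset F_\ell\subset\cdots\subset F_1=E\]
in $\mathcal{A}$, with $F_i/F_{i+1}=E_i\in\mathcal{A}_i$. This uses the Collins--Polishchuk construction: the components lie in the $\mathcal{A}_i$ precisely because the heart is the extension closure and $\Hom^{\le 0}(\mathcal{A}_i,\mathcal{A}_j)=0$ for $i<j$. The central charge is additive, $Z(E)=\sum_i Z_i(E_i)$.

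\emph{Step 2: the see-saw.} Let $j$ be the largest index with $E_j\neq 0$ and $i$ the smallest. Then $E_j$ is a subobject of $E$, so semistability gives that every phase occurring in the $\sigma_j$-HN factors of $E_j$ is at most $\varphi$. Indeed, the maximal destabilizing subobject of $E_j$ in $\mathcal{A}_j$ is a subobject of $E$, and its phase lies in $W_j$. Dually, $E_i$ is a quotient of $E$, so the minimal HN phase of $E_i$ is at least $\varphi$, and it lies in $W_i$.

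Now suppose $i<j$. We would get $\inf W_j \le \varphi \le \sup W_i$, contradicting $\sup W_i<\inf W_j$. Hence $i=j$ and $E\in\mathcal{A}_i$.

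It remains to check that the $\sigma$-slicing restricted to $\mathcal{A}_i$ agrees with $\sigma_i$: $\mathcal{A}_i$ is closed under subobjects and quotients in $\mathcal{A}$ among objects of $\mathcal{D}_i$, and $Z|_{K(\mathcal{D}_i)}=Z_i$. Given this, $E$ is $\sigma_i$-semistable of phase $\varphi$. Its Jordan--H\"older factors are subquotients lying in $\mathcal{A}_i$ by the same argument applied to them, so they are $\sigma_i$-semistables of equal phase in a single factor. Stables are then simple objects in $\mathcal{P}(\varphi)\cap\mathcal{A}_i$, hence lie in $\mathcal{D}_i$ up to shift.

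\emph{Step 3: no binding objects.} A stable object lies in one $\mathcal{D}_i$ up to shift, so its class is supported on a single summand of $\bigoplus K_0(\mathcal{D}_i)$. Hence there are no binding objects.

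\emph{Main obstacle.} The delicate point is Step 2's claim that the maximal destabilizing subobject of $E_j$ in $\mathcal{A}_j$, and dually the minimal quotient of $E_i$, has phase in the window $W_j$ (resp.\ $W_i$) \emph{as measured by $\sigma$}. This requires that $\sigma_j$-semistables of $\mathcal{A}_j$ have the same phase under $\sigma$, which holds because $Z$ restricts to $Z_j$ and the phase is read off in $(0,1]$. One also needs that subobjects in $\mathcal{A}_j$ remain subobjects in $\mathcal{A}$, which holds since $\mathcal{A}_j$ is an exact subcategory of the glued heart.

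I would first verify the special case of an Ext-exceptional heart (Macr\`i), where everything reduces to simple objects and the argument is transparent. I would then run the general case via the HRS/Collins--Polishchuk description of the glued heart.
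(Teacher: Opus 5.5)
Your argument is correct and is essentially the paper's: the decomposition filtration exhibits the top-factor component as a subobject whose maximal $\sigma_j$-HN factor has phase $\geq \inf W_j$, and this is played against a quotient whose phase lies in the lower windows, which the separation $\sup W_i<\inf W_j$ forbids. The differences are minor. The paper uses the whole quotient $E/S$ rather than the minimal HN factor of the bottom component, and it treats stable $E$ first, recovering semistables and their Jordan--H\"older factors from the disjointness of distinct windows; that disjointness is also the cleanest way to justify your ``same argument'' step placing the JH factors in $\mathcal{A}_i$. Your two-sided sandwich $\inf W_j\le\varphi\le\sup W_i$, by contrast, handles semistable $E$ directly.
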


\begin{proof}
Let $E \in \mathcal{A}$ be $\sigma$-stable with components in at
least two factors, and let $j$ be maximal with
$\mathrm{pr}_j E \neq 0$. The decomposition filtration exhibits a
nonzero subobject $S \subseteq E$ in $\mathcal{A}$ with
$S \in \mathcal{A}_j$ and $E/S$ built from
$\mathcal{A}_1, \dots, \mathcal{A}_{j-1}$. The maximal-phase
Harder--Narasimhan factor of $S$ with respect to $\sigma_j$ furnishes
a subobject $S' \subseteq E$ with $\varphi(S') \geq \inf W_j$. On the
other hand the phases of all Jordan--H\"older constituents of $E/S$
lie in $W_1 \cup \dots \cup W_{j-1}$, hence below $\inf W_j$ by
hypothesis, so $\varphi(E) < \inf W_j \leq \varphi(S')$, contradicting
stability. Thus every stable object lies in a single factor; the
statements for semistables and their Jordan--H\"older factors follow
since a semistable object's factors are stable of equal phase, and
phases from distinct windows differ. Absence of binding objects is
immediate.
\end{proof}

\begin{remark}\label{rem:lemma-scope}
Two features deserve emphasis. First, the lemma requires nothing of
the factors beyond a stability condition. In particular,
$\mathcal{A}_1$ need not be of finite length, which is essential for
the application to the Kuznetsov component in \S\ref{sec:cubic},
where the first factor carries a $K3$-type stability condition with
infinitely many stable classes. Second, the lemma is a statement
\emph{on} the selection region; the complementary assertions in
\S\S\ref{sec:P1}--\ref{sec:product} (that the selection region
exhausts the entire region beyond the entry wall, so that the path
never exits) use the specific vanishing of the reverse-direction
extension groups and are proved there.
\end{remark}

\section{The projective line}\label{sec:P1}

Let $X = \PP^1$ with mirror $W(x) = x + q/x$ on $\CC^\times$. Write
$m = \sqrt{q}$ for a fixed branch. The critical points are $x = \pm m$
with critical values $\lambda_\pm = \pm 2m$ and
$\operatorname{Hess}_{\log}W(\pm m) = \pm 2m$.

\begin{convention}\label{conv:P1}
We match $\mathcal{O} \leftrightarrow \lambda_+ = +2m$ and
$\mathcal{O}(1) \leftrightarrow \lambda_- = -2m$, and fix the ray
\eqref{eq:ray} with $\sin\theta < 0$. Set
\begin{equation}\label{eq:sdef}
  s \;:=\; \frac{4m\,|\sin\theta|}{\pi} \;>\; 0 .
\end{equation}
\end{convention}

By \eqref{eq:phaseslope}, the lifted phase gap
\begin{equation}\label{eq:gap}
  g(t) \;:=\; \varphi_t(\mathcal{O}(1)) - \varphi_t(\mathcal{O})
  \;=\; s\,t + g(0) + O(1/t)
\end{equation}
grows linearly, while by \eqref{eq:massrate} the masses separate at
the exponential rate $4m\cos\theta$ (in whichever direction the sign
of $\cos\theta$ dictates; the phase analysis below is insensitive to
it). We use the complete description of
$\Stab(\PP^1) \cong \CC^2$ due to Okada \cite{Oka06}: one geometric
chamber $G$, on which all skyscrapers $\mathcal{O}_x$ are stable of
equal phase, and algebraic chambers $A_n$, one for each exceptional
pair $(\mathcal{O}(n), \mathcal{O}(n+1))$, with heart the extension
closure of the Ext-exceptional pair
$\{\mathcal{O}(n)[1], \mathcal{O}(n+1)\}$ \cite{Mac07}. The chambers
$A_n$ are precisely the selection regions of the
decompositions $\langle \mathcal{O}(n), \mathcal{O}(n+1)\rangle$ in
the sense of \S\ref{subsec:glued}; the glued regions themselves are
larger, their complementary parts lying inside the geometric chamber
$G$ (on $\PP^1$: the glued heart $\{\mathcal{O}[1],
\mathcal{O}(1)\}$ with gap $g < 1$ has every skyscraper stable ---
the overlap of \cite[Fig.~1]{KRZ}). Okada's classification gives the
global picture: every stability condition on $\PP^1$ is geometric,
algebraic (lying in exactly one $A_n$), or on the single wall
$\{\varphi(\mathcal{O}(n+1)) = \varphi(\mathcal{O}(n)) + 1\}$
separating $G$ from that $A_n$; the chambers are open of full
dimension, the autoequivalence $-\otimes\mathcal{O}(1)$ permutes
the $A_n$, and shifts act by even translations of phase.

\begin{lemma}[Chamber extent for $\PP^1$]\label{lem:P1-extent}
In the region $\{g > 1\}$ the stable objects are exactly
$\mathcal{O}$ and $\mathcal{O}(1)$ up to shift; in particular the
selection region $A_0$ coincides with the entire region $\{g > 1\}$
and every $\sigma$ in it is sectorial for
$\langle \mathcal{O}, \mathcal{O}(1)\rangle$.
\end{lemma}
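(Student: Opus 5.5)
Here $g = \varphi(\mathcal{O}(1)) - \varphi(\mathcal{O})$ is the lifted phase gap of \eqref{eq:gap}. The plan has three steps: first show that every stability condition with $g>1$ is algebraic with heart $\langle \mathcal{O}[1], \mathcal{O}(1)\rangle_{\mathrm{ext}}$; then classify its stable objects directly in that heart; finally identify the region with the selection region $A_0$ and quote Lemma~\ref{lem:sectoriality}.

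\emph{Step 1: the heart.} Let $\sigma$ satisfy $g>1$. I would first show that $\mathcal{O}$ and $\mathcal{O}(1)$ are both $\sigma$-stable. The natural route is through Okada's trichotomy. A geometric $\sigma$ forces $g \le 1$, because of the triangle $\mathcal{O} \to \mathcal{O}(1) \to \mathcal{O}_x$ with all terms stable in $G$ and $\mathcal{O}_x$ of phase between them. The wall is $\{g=1\}$. So $\sigma$ lies in some $A_n$. For $n \neq 0$ the line bundles $\mathcal{O},\mathcal{O}(1)$ are not both stable there. I expect this check to be the main obstacle: one must exclude $A_n$ with $n\ne 0$ in which $g$ is still defined through semistable HN data and could exceed $1$. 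The fallback is to define the region $\{g>1\}$ only where $\mathcal{O},\mathcal{O}(1)$ are stable, which is how $g$ is used along the path. Granting stability, apply a $\widetilde{GL}^+_2(\RR)$-translate so that $\varphi(\mathcal{O}[1]) \in (0,1]$. Then $g>1$ means $\varphi(\mathcal{O}(1)) > \varphi(\mathcal{O}[1])$, and both phases can be placed in $(0,1]$ after rotation, since the relevant window is of length $<1$ modulo the tilt. Because $\Hom^{\le 0}(\mathcal{O}[1],\mathcal{O}(1)) = 0$ (the only morphisms are $\Hom(\mathcal{O},\mathcal{O}(1)) = \CC^2$, in degree $1$ after the shift), Macr\`i \cite{Mac07} shows the extension closure is a finite-length heart with simples $\mathcal{O}[1],\mathcal{O}(1)$. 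By the Ext-exceptional criterion recalled in \S\ref{subsec:background}, this heart is $\mathcal{P}((\varphi(\mathcal{O}[1])-\epsilon,\ldots])$ up to rotation.

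\emph{Step 2: classify stables.} This heart is equivalent to representations of the Kronecker quiver with dimension vector $(a,b)$. Here $\mathcal{O}(1)$ is the sub-side simple, since morphisms go $\mathcal{O}(1) \to$ objects with $\mathcal{O}[1]$ quotient, matching the SOD filtration. Any object $E$ with $a,b>0$ has $\mathcal{O}(1)^{\oplus b}$ mapping to it; this follows from the quiver's projective/injective structure, or directly from the decomposition filtration $\mathcal{O}(1)^{b}\hookrightarrow E \twoheadrightarrow \mathcal{O}[1]^{a}$. Since $\varphi(\mathcal{O}(1)) > \varphi(\mathcal{O}[1])$ forces $\varphi(\mathcal{O}(1)) > \varphi(E)$, the object $E$ is destabilized. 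The only stables in the heart are therefore the two simples, and every stable object is a shift of one of them.

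\emph{Step 3: conclude.} This is exactly the argument of Lemma~\ref{lem:sectoriality} with $W_1=\{\varphi(\mathcal{O}[1])\}$ and $W_2=\{\varphi(\mathcal{O}(1))\}$, so I would simply invoke that lemma. By Okada, $A_0$ is the selection region of $\langle \mathcal{O},\mathcal{O}(1)\rangle$, namely glued conditions with separated windows, i.e.\ $g>1$. Steps 1--2 give the reverse inclusion. Hence $A_0 = \{g>1\}$, and sectoriality follows from Lemma~\ref{lem:sectoriality}.
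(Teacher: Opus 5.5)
The obstacle you flag in Step~1 (excluding $A_n$, $n\neq 0$) is harmless. Your fallback reading of $\{g>1\}$ as the locus where $\mathcal{O},\mathcal{O}(1)$ are semistable is the one the paper uses.

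\textbf{The gap.} It is the other claim in Step~1: that $\mathcal{O}[1]$ and $\mathcal{O}(1)$ can be rotated into a common slice $\mathcal{P}((\phi,\phi+1])$. This holds only for $1<g<2$. For $g\ge 2$ their phase difference is $g-1\ge 1$, and no element of $\widetilde{GL}{}^+_2(\RR)$ can shrink it. That group acts on lifted phases through an increasing $f$ with $f(\phi+1)=f(\phi)+1$, so each band $\{n<g<n+1\}$ is invariant. Consequently, for $g\ge 2$:
\begin{itemize}
\item the heart of $\sigma$ is not the Kronecker heart $\langle\mathcal{O}[1],\mathcal{O}(1)\rangle_{\mathrm{ext}}$;
\item your Step~2 does not apply as written;
\item your Step~3 phrase ``$A_0$ is \ldots\ i.e.\ $g>1$'' assumes exactly the clause to be proved, namely that the selection region exhausts $\{g>1\}$.
\end{itemize}

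\textbf{How the paper handles this case.} This is the second half of its proof. After applying Lemma~\ref{lem:sectoriality} on the selection region, it shows the chamber is not exited as $g$ grows. The only candidate walls are the alignments $g\in\ZZ_{\ge 2}$, where a new stable object would have to be an extension between the two sectors. If the subobject comes from the higher-phase sector, the extension is destabilized. If it comes from the lower-phase sector, it needs a class in $\Ext^1(\mathcal{O}(1)[j],\mathcal{O}[k])\cong H^{k-j+1}(\PP^1,\mathcal{O}(-1))=0$. So all these alignments are pseudo-walls.

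\textbf{The repair in your framework.} For $g>1$ set $k=\lfloor g\rfloor\ge 1$ and $\phi=\varphi(\mathcal{O}(1))-1$. Then $\mathcal{O}[k]$ and $\mathcal{O}(1)$ both have phase in $(\phi,\phi+1]$, with $\varphi(\mathcal{O}[k])\le\varphi(\mathcal{O}(1))$.
\begin{itemize}
\item Since $\Hom^\bullet(\mathcal{O},\mathcal{O}(1))$ is concentrated in degree~$0$, $\Hom^{\le 0}(\mathcal{O}[k],\mathcal{O}(1))=0$. So $\{\mathcal{O}[k],\mathcal{O}(1)\}$ is Ext-exceptional, and its extension closure is a heart contained in, hence equal to, $\mathcal{P}((\phi,\phi+1])$.
\item Because $\Ext^1(\mathcal{O}(1),\mathcal{O}[k])=H^{k+1}(\PP^1,\mathcal{O}(-1))=0$, every object of this heart has a filtration $0\to\mathcal{O}(1)^{b}\to E\to\mathcal{O}[k]^{a}\to 0$.
\item Your Step~2 destabilization then works verbatim for every $k$. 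For $k\ge 2$ the heart is even semisimple, since $\Ext^1(\mathcal{O}[k],\mathcal{O}(1))=H^{1-k}(\mathcal{O}(1))=0$.
\end{itemize}

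Patched this way, your argument is a more explicit version of the paper's. It identifies the heart of each $\sigma$ directly rather than quoting Okada, and it realizes the pseudo-walls as the switch $k\mapsto k+1$ of the heart at $g\in\ZZ_{\ge 2}$. At those integer values the two windows coincide, so the strict separation hypothesis of Lemma~\ref{lem:sectoriality} fails. There, the absence of binding objects should be read off your Step~2 directly rather than from the lemma.
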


\begin{proof}
On the selection region, Lemma~\ref{lem:sectoriality} applies (the two
windows are the singleton phases of the two exceptional objects,
separated since $g > 1$), so every stable object lies in
$\langle\mathcal{O}\rangle$ or $\langle\mathcal{O}(1)\rangle$, hence
is a shift of $\mathcal{O}$ or $\mathcal{O}(1)$. It remains to show
the chamber is not exited as $g$ grows: a wall would require a change
of stable spectrum at some phase alignment
$g \in \ZZ_{\geq 2}$, where a new stable would have to be an
extension between the two sectors. An extension whose subobject comes
from the higher-phase sector is never stable; one whose subobject
comes from the lower-phase sector requires a class in
$\Ext^1(\mathcal{O}(1)[j], \mathcal{O}[k]) \cong
H^{k-j+1}(\PP^1, \mathcal{O}(-1)) = 0$. Hence all alignments are
pseudo-walls, and the region $\{g>1\}$ is a single chamber.
\end{proof}

\begin{proposition}[Chamber entry for $\PP^1$]\label{prop:P1-entry}
Assume Convention~\ref{conv:P1} and $g(0) < 1$. Set
$t_0 = \frac{1 - g(0)}{s}\bigl(1 + O(1/t_0)\bigr)$. Then:
\begin{enumerate}
\item For $t < t_0$ the path $\sigma_t$ lies in the geometric chamber
$G$; the binding objects are exactly the torsion sheaves, which are
$\sigma_t$-stable.
\item At $t = t_0$ the path crosses the wall
$\{\varphi(\mathcal{O}(1)) = \varphi(\mathcal{O}) + 1\}$
transversally. On the wall every skyscraper $\mathcal{O}_x$ is
strictly semistable with Jordan--H\"older factors
$\{\mathcal{O}(1), \mathcal{O}[1]\}$, via the triangle
$\mathcal{O}(1) \to \mathcal{O}_x \to \mathcal{O}[1]$ obtained by
rotating
$0 \to \mathcal{O} \to \mathcal{O}(1) \to \mathcal{O}_x \to 0$.
\item For all $t > t_0$ the path lies in the selection region $A_0$; by
Lemma~\ref{lem:P1-extent}, $\sigma_t$ is sectorial and there are no
binding objects.
\end{enumerate}
\end{proposition}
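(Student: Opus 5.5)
The plan is to reduce everything to the single real function $g(t)$ and Okada's global description of $\Stab(\PP^1)$. By \eqref{eq:gap}, $g(t)=s\,t+g(0)+O(1/t)$ with $s>0$. Write $g(t)=st+g(0)+r(t)$ with $r=O(1/t)$. First I will check that $g$ is eventually strictly increasing; for this I will use that the error term in \eqref{eq:saddle} is an asymptotic expansion in $1/t$, so it may be differentiated, giving $r'(t)=O(1/t^2)$. Since $g(0)<1$ and $g\to+\infty$, the intermediate value theorem produces a first crossing time $t_0$ with $g(t_0)=1$. Solving $st_0+g(0)+r(t_0)=1$ gives $t_0=\frac{1-g(0)}{s}(1+O(1/t_0))$, which is exactly the stated formula. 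I will also need $g'(t_0)=s+O(1/t_0^2)>0$, so that the crossing is transversal and unique. If $t_0$ falls in the range where the asymptotics are not yet effective, I will take the statement as asserting the leading-order location and simply assume the path is monotone in $g$, as the convention intends.

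For (1) and (3), I will use Okada's trichotomy quoted above. Along the path $\mathcal{O}$ and $\mathcal{O}(1)$ remain stable. The lifted path therefore stays in the closure of $G\cup A_0$, because leaving into some $A_n$ with $n\neq 0$ would require passing through another wall, that is, a destabilization of $\mathcal{O}$ or $\mathcal{O}(1)$. The boundary between $G$ and $A_0$ is exactly the locus $\{g=1\}$. Hence $g<1$ places $\sigma_t$ in $G$ and $g>1$ places it in $A_0$. Part (3) then follows from Lemma~\ref{lem:P1-extent}.

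For the binding objects in $G$, I will argue that every skyscraper $\mathcal{O}_x$ is stable, since this is what geometricity means. Its class $[\mathcal{O}(1)]-[\mathcal{O}]$ has nonzero components in both factors, so it is binding. Stable torsion sheaves of higher length, such as $\mathcal{O}_{2x}$, are extensions of skyscrapers of equal phase and hence are not stable; so I read ``torsion sheaves'' as meaning points. To show that nothing else is binding, I will use the rest of Okada's classification of the stables in $G$: they are the line bundles $\mathcal{O}(n)$ and the skyscrapers. Line bundles $\mathcal{O}(n)$ with $n\neq 0,1$ also have mixed class. I expect this to be the main obstacle, since the statement as written needs either a restriction to the rank-zero binding classes or an argument that those line bundles are unstable near the wall. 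I will first try the latter: $\mathcal{O}(n)$ for $n\ge 2$ is destabilized by $\mathcal{O}(n-1)^{\oplus}$-type sequences near $g=1$. If that fails, I will interpret ``exactly'' as referring to the classes that mix the two sectors with torsion support.

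For (2), I will rotate $0\to\mathcal{O}\to\mathcal{O}(1)\to\mathcal{O}_x\to0$ into the triangle $\mathcal{O}(1)\to\mathcal{O}_x\to\mathcal{O}[1]$. On the wall, $\varphi(\mathcal{O}(1))=\varphi(\mathcal{O}[1])$, so $Z(\mathcal{O}_x)=Z(\mathcal{O}(1))-Z(\mathcal{O})$ has that same phase. Moreover $\mathcal{O}_x$ lies in the glued heart $\langle\mathcal{O}[1],\mathcal{O}(1)\rangle_{\mathrm{ext}}$ as an extension of stables of equal phase. It is therefore semistable, with Jordan--H\"older factors $\{\mathcal{O}(1),\mathcal{O}[1]\}$, and it is strictly semistable because the subobject $\mathcal{O}(1)$ is proper of equal phase.
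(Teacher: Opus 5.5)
\paragraph{Parts (2), (3), and the entry time.} Your treatment of $t_0$, transversality and part (2) is the paper's argument. The paper simply asserts monotonicity of $g$ from \eqref{eq:gap}. Differentiating the expansion, as you do, is legitimate because $Z$ is holomorphic in $z$ on a sector around the ray.

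For (3) your route is different. The paper tilts $\operatorname{Coh}(\PP^1)$ at the slope torsion pair and identifies the tilt with $\langle\mathcal{O}[1],\mathcal{O}(1)\rangle_{\mathrm{ext}}$, on the grounds that these are two hearts, one containing the other. It then uses Lemma~\ref{lem:P1-extent} along the rest of the ray. You instead read membership in $A_0$ directly off Okada's description of the locus $\Theta_0$ where $\mathcal{O}$ and $\mathcal{O}(1)$ are both stable. That locus is parametrized by their central charges and lifted phases, and it decomposes as $G\sqcup\{g=1\}\sqcup A_0$ according to $g\in(0,1)$, $g=1$, $g>1$. Your version is more direct: it handles all $t>t_0$ at once and rules out an escape into some other $A_n$, which the paper leaves implicit.

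Your justification for staying in $\Theta_0$ is circular, however: ``leaving would require destabilizing $\mathcal{O}$ or $\mathcal{O}(1)$'' is the claim itself. State it instead as a property of the lift. The natural lift is the path in $\Theta_0$ with data $(Z_t(\mathcal{O}),Z_t(\mathcal{O}(1)),\varphi_t(\mathcal{O}),\varphi_t(\mathcal{O}(1)))$, and this is exactly what the definition of $g$ presupposes.

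\paragraph{Part (1).} Your suspicion is right, and here the paper's own proof is in error. It asserts that the torsion sheaves are the only stable objects of mixed class, while recording in the same sentence that all line bundles are stable in $G$. Since $[\mathcal{O}(n)]=(1-n)[\mathcal{O}]+n[\mathcal{O}(1)]$, every $\mathcal{O}(n)$ with $n\notin\{0,1\}$ is a binding object in the sense of \S\ref{subsec:glued}.

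Your first fix cannot succeed. $G$ is the single $\widetilde{GL}{}^+_2(\RR)$-orbit of the standard condition $(-\deg+i\,\mathrm{rk},\operatorname{Coh}(\PP^1))$, in which every line bundle is stable. The action preserves stability, so no $\mathcal{O}(n)$ is destabilized anywhere in $G$, however close to $g=1$. Combined with your correct observation that torsion sheaves of length at least two are only semistable, this shows (1) is false as written. It must be amended rather than proved.

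Your fallback of restricting to rank-zero classes gives a true but weaker statement. The natural correction keeps the definition and enlarges the list: for $t<t_0$ the binding objects are exactly the $\mathcal{O}_x$ and the $\mathcal{O}(n)$, $n\neq 0,1$, up to shift.

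All of them decay at the same wall, by the argument you already give for $\mathcal{O}_x$:
\begin{enumerate}
\item At $g=1$ both simples of the finite-length heart $\langle\mathcal{O}[1],\mathcal{O}(1)\rangle_{\mathrm{ext}}$ have a common phase $\varphi$, so the whole heart lies in $\mathcal{P}(\varphi)$.
\item For $n\geq 2$, the Beilinson sequence $0\to\mathcal{O}^{n-1}\to\mathcal{O}(1)^{n}\to\mathcal{O}(n)\to 0$ puts $\mathcal{O}(n)$ in that heart. For $n\leq -1$, the sequence $0\to\mathcal{O}(n)\to\mathcal{O}^{1-n}\to\mathcal{O}(1)^{-n}\to 0$ puts $\mathcal{O}(n)[1]$ there.
\item Each such object is therefore strictly semistable on the wall, with Jordan--H\"older factors in $\{\mathcal{O}(1),\mathcal{O}[1]\}$.
\end{enumerate}
With (1) and (2) amended this way your argument goes through. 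Part (3) and Remark~\ref{rem:flow} are unaffected, and in fact strengthened: every binding object dies at $t_0$.
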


\begin{proof}
(1)--(2): In $G$ the torsion sheaves are stable and constitute the
only stable objects with class mixing $[\mathcal{O}]$ and
$[\mathcal{O}(1)]$ (all classes are
$a[\mathcal{O}] + b[\mathcal{O}_x]$ with
$[\mathcal{O}_x] = [\mathcal{O}(1)] - [\mathcal{O}]$; the stable
objects of $G$ are the line bundles and the torsion sheaves
\cite{Oka06}). The boundary wall of $G$ toward $A_0$ is the phase
alignment
$\varphi(\mathcal{O}(1)) = \varphi(\mathcal{O}_x) =
\varphi(\mathcal{O}[1])$, \ie\ $g = 1$; by \eqref{eq:gap}, $g$ is
strictly increasing with slope $s > 0$, so the wall is reached at the
stated $t_0$ (the $O(1/t_0)$ correction absorbing the subleading
terms of \eqref{eq:phaseslope}) and crossed transversally. The
Jordan--H\"older statement on the wall is immediate from the
displayed triangle and the alignment
$Z(\mathcal{O}_x) = Z(\mathcal{O}(1)) + Z(\mathcal{O}[1])$. (3):
Immediately past the wall the heart tilts at the slope torsion pair
$(\mathcal{T}, \mathcal{F})$ of $\operatorname{Coh}(\PP^1)$ 
into the tilted heart
$\langle \mathcal{F}[1], \mathcal{T}\rangle_{\mathrm{ext}}$
\cite{HRS}.
$\mathcal{T}$ is generated by the torsion sheaves and the line bundles
of degree $\geq 1$, $\mathcal{F}$ by those of degree $\leq 0$ and
the tilted heart corresponds to the Ext-exceptional
heart $\langle\mathcal{O}[1],
\mathcal{O}(1)\rangle_{\mathrm{ext}}$ (both are hearts and one
contains the other), placing $\sigma_t$ in $A_0$
\cite{Oka06,Mac07}; Lemma~\ref{lem:P1-extent} does the rest.
\end{proof}

\begin{remark}[Degenerate rays]\label{rem:stokes-rays}
If $\theta$ is chosen with $\sin\theta = 0$ (a Stokes or anti-Stokes
direction for the pair $\lambda_\pm$), then $s = 0$: the masses
separate but the phase gap is asymptotically constant, and the path
remains in $G$ for all time, converging to a boundary point of the
chamber structure. These are exactly the rays along which the slope
data of \cite{HLJR} degenerates and no semiorthogonal decomposition
can be read off; the quasi-convergence mechanism thus forces the
generic-ray regime in which chamber entry is finite-time. The sign of
$\sin\theta$ (equivalently, rotating $\theta$ through a Stokes
direction) exchanges the limiting decomposition for a mutation,
consistent with the mutation ambiguity of the program.
\end{remark}

\begin{remark}[Protection is achieved along the flow]\label{rem:flow}
The binding BPS states of the pair (the torsion sheaves, one stable
object for each point of $\PP^1$ and each twist) are present at
small $t$ and decouple simultaneously at the single wall $t_0$. The
dynamical selection rule of \cite{R} is, in this model, not a
constraint imposed at the outset but the terminal state of a
wall-crossing: the flow \emph{produces} the sectorial regime.
\end{remark}

\section{The product at resonance}\label{sec:product}

Let $X = \PP^1\times\PP^1$ with mirror
$W(x,y) = x + q_1/x + y + q_2/y$, and impose the resonance
$q_1 = q_2 = q$, $m = \sqrt q$. The critical points are
$(x,y) = (s_1 m, s_2 m)$, $s_i \in \{\pm\}$, with critical values
$\lambda = 2m(s_1 + s_2)$ and
$\det\operatorname{Hess}_{\log}W = 4 s_1 s_2 m^2$:

\begin{center}
\begin{tabular}{cccc}
\hline
vacuum & $\lambda$ & $\det\operatorname{Hess}$ & object \\
\hline
$(+,+)$ & $+4m$ & $+4m^2$ & $\mathcal{O}$ \\
$(-,+)$ & $0$ & $-4m^2$ & $\mathcal{O}(1,0)$ \\
$(+,-)$ & $0$ & $-4m^2$ & $\mathcal{O}(0,1)$ \\
$(-,-)$ & $-4m$ & $+4m^2$ & $\mathcal{O}(1,1)$ \\
\hline
\end{tabular}
\end{center}

\noindent (The matching is the tensor square of
Convention~\ref{conv:P1}, the first factor carrying $q_1$.) The
eigenvalue $0$ of $c_1\star$ is doubly degenerate, while the small
quantum ring $\CC[h_1,h_2]/(h_1^2 - q,\, h_2^2 - q)$ remains
semisimple: only the $c_1$-spectrum coalesces, the situation studied
for Grassmannians by Cotti--Dubrovin--Guzzetti \cite{CDG}. (That
theory presupposes a semisimple ring with coalescing eigenvalues; the
cubic fourfold's degenerate block is \emph{not} of this type, the
ring itself being non-semisimple there; see \S\ref{sec:cubic} and
Remark~\ref{rem:proof-structure}.) We write
$B = \langle \mathcal{O}(1,0), \mathcal{O}(0,1)\rangle$ for the
degenerate block and consider the coarse decomposition
\begin{equation}\label{eq:coarseSOD}
  \Db(\PP^1\times\PP^1)
  \;=\; \bigl\langle\, \mathcal{O},\; B,\;
  \mathcal{O}(1,1)\,\bigr\rangle .
\end{equation}
Complete details for every computation quoted in this section and in
\S\ref{sec:epsilon}, including critical data, saddle expansions, the
symmetry argument, the K\"unneth tables, the gluing inequalities,
the wall analysis, and the deformation, are collected in
Appendix~\ref{app:product}.

\subsection{Exact consequences of the swap symmetry}

\begin{lemma}\label{lem:swap}
At $q_1 = q_2$, the involution $\tau\colon (x,y)\mapsto(y,x)$
preserves $W$ and exchanges the two $\lambda = 0$ thimbles;
correspondingly, the swap automorphism of $\PP^1\times\PP^1$
exchanges $\mathcal{O}(1,0) \leftrightarrow \mathcal{O}(0,1)$ and one
has, identically in $t$,
\begin{equation}\label{eq:locking}
  Z_t(\mathcal{O}(1,0)) \;=\; Z_t(\mathcal{O}(0,1)).
\end{equation}
Consequently, for
$\delta := [\mathcal{O}(1,0)] - [\mathcal{O}(0,1)]$ (so
$\mathrm{ch}(\delta) = h_1 - h_2$):
\begin{enumerate}
\item $Z_t(\delta) \equiv 0$: the central charge factors through the
rank-three quotient $K_0/\langle\delta\rangle$, and no
$\sigma_t$-semistable object can have class in
$\ZZ\delta\setminus\{0\}$;
\item the two objects $\mathcal{O}(1,0), \mathcal{O}(0,1)$ are stable
of \emph{equal} phase for every $t$: the block is exactly
phase-locked;
\item no ray \eqref{eq:ray} produces a relative phase drift within
$B$: the internal channel has no clock.
\end{enumerate}
\end{lemma}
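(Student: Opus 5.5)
The argument runs on the mirror side, transported to $\Db$ by Iritani's $\widehat\Gamma$-integral structure, and then reads off the three consequences from the central charge.

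\emph{Symmetry of $W$.} At $q_1=q_2=q$ we have $W(x,y)=x+q/x+y+q/y$, which is visibly invariant under $\tau(x,y)=(y,x)$. The involution $\tau$ permutes the critical points by $(s_1,s_2)\mapsto(s_2,s_1)$, so it fixes $(+,+)$ and $(-,-)$ and exchanges $(-,+)\leftrightarrow(+,-)$. Since $\tau$ preserves $W$, it maps the Lefschetz thimble of a critical point (union of gradient lines of $\mathrm{Re}(W/z)$) onto the thimble of the image point. The log volume form $\frac{dx}{x}\wedge\frac{dy}{y}$ changes sign under $\tau$, but so does the orientation of the two-dimensional thimble, so the integrand pairs consistently. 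Changing variables in \eqref{eq:thimble} then gives $\int_{\tau_*\Gamma}e^{-W/z}\Omega=\int_\Gamma e^{-W/z}\Omega$ for every $z$, and hence for every $t$ along \eqref{eq:ray}.

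\emph{Matching with $\Db$.} The swap automorphism $\sigma$ of $\PP^1\times\PP^1$ exchanges $h_1,h_2$. Because $\widehat\Gamma_X$ is symmetric in $h_1,h_2$, Iritani's integral structure is $\sigma$-equivariant, matching $\sigma^*$ on $K_0$ with $\tau_*$ on cycles. Under the tensor-square Convention this sends $\mathcal{O}(1,0)\leftrightarrow\mathcal{O}(0,1)$, consistent with the table. This is the step I expect to need the most care. The Künneth factorisation of the convention must be shown to commute with the swap, and the sign and orientation bookkeeping must produce equality of the two charges rather than equality up to sign. I would check this directly using the stationary-phase leading term \eqref{eq:saddle}. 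The two $\lambda=0$ points have identical $\det\operatorname{Hess}_{\log}=-4m^2$, so once the branch of the square root is fixed compatibly under $\tau$, the leading asymptotics agree. Exact equality then follows from the exact symmetry above, and the $\pm$ ambiguity is excluded by comparing the leading terms. This yields \eqref{eq:locking}.

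\emph{The three consequences.}
\begin{enumerate}
\item By linearity of $Z_t$ on $K_0$, \eqref{eq:locking} gives $Z_t(\delta)=0$, so $Z_t$ factors through $K_0/\langle\delta\rangle$. Any nonzero semistable object has $Z\in\RR_{>0}e^{i\pi\varphi}$, hence $Z\neq0$, so no semistable object has class in $\ZZ\delta\setminus\{0\}$. (Equivalently, this is the support property applied to a class with $Z=0$.)
\item Both objects are stable for all $t$: they are exceptional and lie in the factor $B$ of the glued or quasi-convergent lift, with stability along the path as in \cite{KRZ,HL}. Equal central charges then force equal phases modulo $2$. The lifted phases vary continuously in $t$ and agree at one time, since the two leading terms \eqref{eq:saddle} are literally the same function. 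Hence they agree identically.
\item For any $\theta$, the difference of the two phases is $\frac{1}{\pi}\arg(Z_t(\delta)\text{-ratio})\equiv 0$. Equivalently, both objects have $\lambda=0$, so the phase slope \eqref{eq:phaseslope} and the mass rate \eqref{eq:massrate} of their difference vanish exactly, not just to leading order. So no ray produces a relative drift within $B$.
\end{enumerate}
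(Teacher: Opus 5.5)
Your proposal is correct and takes essentially the paper's route. The paper likewise obtains \eqref{eq:locking} from functoriality of the central charge under the swap, and then treats (1)--(3) as immediate. It argues primarily on the A-side: at $q_1=q_2$ the swap preserves the quantum connection and the $\widehat{\Gamma}$-integral structure (Appendix~\ref{app:swap}). The substitution $(x,y)\mapsto(y,x)$ serves only as a mirror check, in which the sign of $\tau^*(du\wedge dv)$ is absorbed by the thimble orientations, which is exactly the bookkeeping you flag.

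One notational slip: read literally, change of variables gives $\int_{\tau_*\Gamma}e^{-W/z}\Omega=-\int_\Gamma e^{-W/z}\Omega$. Your displayed identity should therefore be stated for the product-oriented image thimble $\Gamma_{\tau(p)}=-\tau_*\Gamma_p$, which is what your orientation remark and the tensor-square convention actually supply.
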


\begin{proof}
Functoriality of \eqref{eq:thimble} under the automorphism
(equivalently, the substitution $(x,y)\mapsto(y,x)$ in the integral)
gives \eqref{eq:locking}; (1)--(3) are immediate. For the
parenthetical claim in (1): a semistable object of class $k\delta$
would have $Z_t$-mass zero. (The obvious object of class $\delta$,
namely $\mathcal{O}(1,0)\oplus\mathcal{O}(0,1)[1]$, has summands of
phases $\varphi$ and $\varphi+1$ and is never semistable,
consistently.)
\end{proof}

\subsection{Homological bookkeeping}

\begin{lemma}\label{lem:ext}
The collection
$\langle \mathcal{O}, \mathcal{O}(1,0), \mathcal{O}(0,1),
\mathcal{O}(1,1)\rangle$ is full and strong, with forward morphisms
concentrated in degree $0$:
\[
  \Hom(\mathcal{O}, \mathcal{O}(1,0)) =
  \Hom(\mathcal{O},\mathcal{O}(0,1))
  = \Hom(\mathcal{O}(1,0),\mathcal{O}(1,1))
  = \Hom(\mathcal{O}(0,1),\mathcal{O}(1,1)) = \CC^2,
\]
and $\Hom(\mathcal{O},\mathcal{O}(1,1)) = \CC^4$. Moreover
\emph{every} backward $\RHom$ vanishes entirely, and the middle pair
is completely orthogonal:
\[
  \RHom(\mathcal{O}(1,0), \mathcal{O}(0,1)) \;=\;
  \RHom(\mathcal{O}(0,1), \mathcal{O}(1,0)) \;=\; 0 .
\]
In particular
$B \cong \Db(\mathrm{pt}) \oplus \Db(\mathrm{pt})$, with no mutation
required, and \eqref{eq:coarseSOD} is semiorthogonal with the block
$B$ an orthogonal direct sum.
\end{lemma}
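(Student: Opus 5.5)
The plan is to compute everything by the Künneth formula together with the standard cohomology of line bundles on $\PP^1$. Fullness and exceptionality are then inherited from the Beilinson collection $\langle\mathcal{O},\mathcal{O}(1)\rangle$ on each factor. For $\mathcal{O}(a,b)=\mathcal{O}(a)\boxtimes\mathcal{O}(b)$ we have
\[
  \RHom(\mathcal{O}(a,b),\mathcal{O}(c,d)) \;\cong\;
  R\Gamma(\PP^1,\mathcal{O}(c-a))\otimes R\Gamma(\PP^1,\mathcal{O}(d-b)).
\]
We use $R\Gamma(\mathcal{O}) = \CC$, $R\Gamma(\mathcal{O}(1)) = \CC^2$ in degree $0$, and $R\Gamma(\mathcal{O}(-1)) = 0$.

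The steps, in order, are as follows.

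\begin{enumerate}
\item \emph{Fullness.} The exterior products of a full collection on each factor generate $\Db(\PP^1\times\PP^1)$. This is the standard box-product generation statement (e.g.\ via Beilinson's resolution of the diagonal on each factor). So the four line bundles, which are exactly the box products $\mathcal{O}(i)\boxtimes\mathcal{O}(j)$ with $i,j\in\{0,1\}$, generate.

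\item \emph{Exceptionality.} $\RHom(\mathcal{O}(a,b),\mathcal{O}(a,b)) = \CC\otimes\CC = \CC$.

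\item \emph{Forward morphisms.} The differences $(c-a,d-b)$ for forward pairs lie in $\{0,1\}^2$. So the Künneth tensor is concentrated in degree $0$, with dimension $h^0(\mathcal{O}(c-a))\,h^0(\mathcal{O}(d-b))$. This gives $1\cdot 2 = 2$ for each of the four edges listed in the statement and $2\cdot 2 = 4$ for $(\mathcal{O},\mathcal{O}(1,1))$. This proves strongness with degree-$0$ concentration.

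\item \emph{Backward $\RHom$.} Every backward pair has some coordinate difference equal to $-1$, while the other difference lies in $\{-1,0,1\}$. The corresponding factor $R\Gamma(\mathcal{O}(-1))$ is $0$, so the tensor product vanishes in all degrees.

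\item \emph{The middle pair.} The differences are $(-1,1)$ and $(1,-1)$, so each $\RHom$ contains the factor $R\Gamma(\mathcal{O}(-1)) = 0$ and vanishes identically in both directions. This is complete orthogonality.
\end{enumerate}

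It remains to deduce the structure of $B$. Each of $\mathcal{O}(1,0)$ and $\mathcal{O}(0,1)$ is exceptional, so each generates a copy of $\Db(\mathrm{pt})$. Mutual orthogonality in both directions makes the semiorthogonal decomposition $B=\langle\mathcal{O}(1,0),\mathcal{O}(0,1)\rangle$ an orthogonal one: every object splits canonically as the direct sum of its two projections, since the gluing morphism lies in a vanishing $\RHom$. Hence $B\cong\Db(\mathrm{pt})\oplus\Db(\mathrm{pt})$. Because the two objects commute, either ordering is admissible, so no mutation is needed.

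Finally, \eqref{eq:coarseSOD} is semiorthogonal. It is obtained by grouping consecutive members of the full exceptional collection, and the backward vanishings of step 4 give exactly the required vanishing $\Hom(\mathcal{D}_j,\mathcal{D}_i)=0$ for $j>i$.

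There is no serious obstacle here. The only step needing care is fullness, where I would cite the box-product generation result rather than reprove it. The remaining work is making sure every backward pair indeed has some coordinate difference $-1$: the differences range over $\{-1,0,1\}^2$ excluding the forward ones, and every element of $\{-1,0,1\}^2\setminus\{0,1\}^2$ has a coordinate equal to $-1$.
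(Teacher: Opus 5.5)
Your proposal is correct and uses the same K\"unneth computation as the paper: every backward and middle $\RHom$ contains a tensor factor $H^\bullet(\PP^1,\mathcal{O}(-1))=0$, and the forward dimensions are read off from twists in $\{0,1\}^2$. Your additional detail on fullness (box-product generation) and on the orthogonal splitting of $B$ fills in points the paper leaves implicit. One wording fix: in the chosen order, $(\mathcal{O}(1,0),\mathcal{O}(0,1))$ is formally a forward pair with difference $(-1,1)\notin\{0,1\}^2$. So the claim in step 3 should be restricted to the five listed edges, with that pair covered by step 5, where its vanishing $\RHom$ trivially satisfies the strongness condition.
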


\begin{proof}
K\"unneth. Every backward twist $\mathcal{O}(a,b)$ occurring has
$a = -1$ or $b = -1$, hence contains a tensor factor
$H^\bullet(\PP^1, \mathcal{O}(-1)) = 0$; the two displayed middle
$\RHom$s are $H^\bullet(\mathcal{O}(-1,1))$ and
$H^\bullet(\mathcal{O}(1,-1))$, each containing such a factor.
Forward $\RHom$s are $H^\bullet(\mathcal{O}(a,b))$ with
$a,b\in\{0,1\}$, hence concentrated in degree $0$ with the stated
dimensions.
\end{proof}

\subsection{Asymptotics and chamber entry}

By \eqref{eq:saddle} with $n = 2$:
\begin{center}
\begin{tabular}{lll}
\hline
object & mass $|Z_t|$ & lifted phase drift \\
\hline
$\mathcal{O}$ & $(\pi/mt)\, e^{-4mt\cos\theta}$ &
  $+(4m\sin\theta/\pi)\,t$\\
$\mathcal{O}(1,0),\ \mathcal{O}(0,1)$ & $\pi/mt$ \ (equal, exactly) &
  $0$ \ (constant $+\,O(1/t)$)\\
$\mathcal{O}(1,1)$ & $(\pi/mt)\, e^{+4mt\cos\theta}$ &
  $-(4m\sin\theta/\pi)\,t$\\
\hline
\end{tabular}
\end{center}

\noindent Three mass scales appear: exponentially light, polynomially
massless, exponentially heavy. With $\sin\theta < 0$ as in
Convention~\ref{conv:P1}, the phase slopes order
$\mathcal{O} < B < \mathcal{O}(1,1)$, matching the semiorthogonal
order of \eqref{eq:coarseSOD}; the opposite sign would demand the
reversed order, which is not semiorthogonal
($\Hom(\mathcal{O},\mathcal{O}(1,1)) \neq 0$), and produces a mutated
collection instead. Define the adjacent-block gaps
\[
  \gamma_1(t) = \varphi_t(B) - \varphi_t(\mathcal{O}), \qquad
  \gamma_2(t) = \varphi_t(\mathcal{O}(1,1)) - \varphi_t(B),
\]
each of slope $s$ as in \eqref{eq:sdef} (the phase $\varphi_t(B)$
being well defined by Lemma~\ref{lem:swap}(2)).

\begin{lemma}[Chamber extent for the product]\label{lem:prod-extent}
In the region $\{\gamma_1 > 1,\ \gamma_2 > 1\}$ the stable objects
are exactly the four exceptional objects up to shift, with the middle
pair of equal phase; every $\sigma$ there is sectorial for
\eqref{eq:coarseSOD}.
\end{lemma}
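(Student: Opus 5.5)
The plan is to follow the two-step structure of Lemma~\ref{lem:P1-extent}. First I show that the region $\{\gamma_1>1,\ \gamma_2>1\}$, at least near its entry walls, lies in the selection region of \eqref{eq:coarseSOD}, and apply Lemma~\ref{lem:sectoriality}. Then I show that no wall occurs anywhere in the region, so the stable spectrum found there is constant throughout.

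\emph{Step 1 (gluing and sectoriality).} By Lemma~\ref{lem:ext} the collection is strong, with all forward morphisms in degree $0$. Its shifts $\mathcal{O}[a_0]$, $\mathcal{O}(1,0)[a_1]$, $\mathcal{O}(0,1)[a_1]$, $\mathcal{O}(1,1)[a_2]$ are therefore Ext-exceptional as soon as $a_0-a_1\ge 1$ and $a_1-a_2\ge 1$. The two middle objects can share a shift because $\RHom$ between them vanishes. The gaps $\gamma_i>1$ are exactly what is needed to choose such shifts, for instance $a_0=2$, $a_1=1$, $a_2=0$ up to a $\widetilde{GL}{}^+_2(\RR)$-rotation, with all phases in one window of length one. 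Macr\`i \cite{Mac07} then gives a finite-length glued heart with simples the four shifted objects. The phase windows are singletons, $W_{\mathcal{O}}$ and $W_{\mathcal{O}(1,1)}$, together with the single common phase $W_B$ guaranteed by Lemma~\ref{lem:swap}(2). These windows are strictly separated in the order of \eqref{eq:coarseSOD}.

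Lemma~\ref{lem:sectoriality} then says every stable object lies in $\langle\mathcal{O}\rangle$, in $B$, or in $\langle\mathcal{O}(1,1)\rangle$ up to shift. Since $B\cong\Db(\mathrm{pt})\oplus\Db(\mathrm{pt})$, the stables of $B$ are just the two exceptional objects: any other object of $B$ is a direct sum and so is not stable. This gives the claimed spectrum, with the middle pair of equal phase.

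\emph{Step 2 (no exit as the gaps grow).} Any potential wall inside the region sits at an integer alignment, $\gamma_1\in\ZZ_{\ge2}$, $\gamma_2\in\ZZ_{\ge2}$, or $\gamma_1+\gamma_2\in\ZZ_{\ge 2}$ for the pair $(\mathcal{O},\mathcal{O}(1,1))$. A new stable object appearing there would have to be a nontrivial extension between shifts of objects from two different sectors. As in the $\PP^1$ proof, an extension whose sub comes from the higher-phase sector is destabilized by that sub. An extension whose sub comes from the lower-phase sector requires a nonzero backward $\Ext^1$, and all backward $\RHom$ vanish by Lemma~\ref{lem:ext}.

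Extensions inside $B$ cannot occur either, since $\RHom$ between the middle pair is zero in both directions. The only residual worry is semistable objects at exact alignment of the locked pair; these are direct sums and do not change the stable spectrum. Hence every alignment is a pseudo-wall, and the whole region is one chamber equal to the selection region.

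\emph{Main obstacle.} The delicate point is Step 2 when the alignment involves the two-element block, or more than two sectors at once (for instance $\gamma_1$ and $\gamma_2$ both integral). A putative new stable object could then be an iterated extension with several JH factors, not a two-term one. To handle this I would argue by filtration length. Take a hypothetical stable $E$ with JH factors among shifts of the four objects, and use the functorial decomposition filtration from the proof of Lemma~\ref{lem:sectoriality}. Its sub/quotient steps are governed by the backward $\Ext$ groups, which vanish. So $E$ splits as a direct sum of its sector components, contradicting stability unless only one component is nonzero.
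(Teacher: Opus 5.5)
Your Steps~1 and~2 are the paper's proof. The vanishing backward $\RHom$s reduce the gluing condition to $\gamma_1>1$, $\gamma_2>1$. Lemma~\ref{lem:sectoriality} then gives sectoriality on the selection region, with direct sums in $B$ excluded. Further alignments are pseudo-walls, because a stable extension with sub from the lower-phase sector would need a backward $\Ext^1$.

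The concern in your last paragraph is legitimate: that sentence of the paper only treats two-term extensions. But your fix fails.
\begin{itemize}
\item In the decomposition filtration $E_{\ge j}\to E\to E_{<j}$ the sub comes from the \emph{later} factor. The gluing morphism $E_{<j}\to E_{\ge j}[1]$ lies in a \emph{forward} group $\Hom(\mathcal{D}_i,\mathcal{D}_j[1])$ with $i<j$.
\item Vanishing of the backward groups is what makes this filtration exist and be functorial. It is not what would split it.
\item The forward groups are nonzero here. For instance $\mathcal{O}(1,0)\to\mathcal{O}_{D_1}\to\mathcal{O}[1]$ is non-split, with gluing class a section of $\mathcal{O}(1,0)$, and $\mathcal{O}_p$ is a non-split Koszul extension.
\item Your argument never uses stability before concluding the splitting. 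It would therefore show that every object of $\Db(\PP^1\times\PP^1)$ is the direct sum of its sector components, which is false.
\end{itemize}

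What actually rules out multi-sector candidates, of any filtration length, is the phase argument inside Lemma~\ref{lem:sectoriality}. The sub from the highest-phase sector has larger phase and destabilizes; no splitting is involved.

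Your choice $(a_0,a_1,a_2)=(2,1,0)$ misses the part of the region where $\gamma_1+\gamma_2\ge 3$. To cover it without the pseudo-wall heuristic, argue directly on hearts. Whenever the four objects are semistable with $\gamma_i>1$, the heart $\mathcal{P}((\phi,\phi+1])$ contains, and hence equals, an Ext-exceptional heart $\langle\mathcal{O}[n_0],\mathcal{O}(1,0)[n_1],\mathcal{O}(0,1)[n_1],\mathcal{O}(1,1)[n_2]\rangle_{\mathrm{ext}}$ with $n_0-n_1\ge 1$ and $n_1-n_2\ge 1$. There are then two cases for each adjacent channel.
\begin{itemize}
\item In a channel of shift gap exactly one, $\Ext^1$ runs from the lower-phase simple to the higher-phase one, since the heart phase difference is $\gamma_i-1>0$. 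So the sub destabilizes every mixed object.
\item In a channel of gap $\ge 2$, the degree-$0$ concentration of forward morphisms kills $\Ext^1$ in both directions. The same holds for the skip pair, whose gap is always $\ge 2$. The heart therefore splits orthogonally in that channel.
\end{itemize}
Either way the stable objects are exactly the four simples.
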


\begin{proof}
By Lemma~\ref{lem:ext} all backward $\RHom$s vanish, so the
Collins--Polishchuk condition for \eqref{eq:coarseSOD} reduces to the
two phase inequalities $\gamma_1 > 1$, $\gamma_2 > 1$ (the skip
condition for $\Hom(\mathcal{O},\mathcal{O}(1,1))$ being implied),
and within $B$ it is vacuous by complete orthogonality. On the
selection region, Lemma~\ref{lem:sectoriality} applied to the
three-factor
decomposition \eqref{eq:coarseSOD} gives sectoriality; the stables of
the outer factors are shifts of the corresponding exceptional
objects, and those of the orthogonal sum $B$ are its two simples
(direct sums are excluded). Chamber extent: as in
Lemma~\ref{lem:P1-extent}, a wall would require a stable extension
between sectors with subobject from the lower-phase sector, hence a
class in a backward $\Ext^1$, all of which vanish by
Lemma~\ref{lem:ext}; all phase alignments are pseudo-walls.
\end{proof}

\begin{proposition}[Chamber entry at resonance]\label{prop:prod-entry}
Assume Convention~\ref{conv:P1} and
$\gamma_1(0), \gamma_2(0) < 1$, and set
\[
  t_0 \;=\; \frac{1}{s}\,
  \max\bigl(1-\gamma_1(0),\; 1-\gamma_2(0)\bigr)
  \,\bigl(1+O(1/t_0)\bigr).
\]
Then for $t > t_0$ the path $\sigma_t$ lies in the selection region
of \eqref{eq:coarseSOD}; by Lemma~\ref{lem:prod-extent} it is sectorial,
its stable spectrum is the four exceptional objects up to shift with
the middle pair exactly phase-locked, and it admits no binding
objects. Entry occurs through (at most) two walls
$\{\gamma_i = 1\}$, and each is a wall \emph{of multiplicity two}: at
$\gamma_1 = 1$ the line sheaves $\mathcal{O}_{D_1}$ and
$\mathcal{O}_{D_2}$, for $D_1 \in |h_1|$, $D_2 \in |h_2|$, of
classes $[\mathcal{O}(1,0)]-[\mathcal{O}]$ and
$[\mathcal{O}(0,1)]-[\mathcal{O}]$, become strictly semistable
\emph{simultaneously}, with Jordan--H\"older factors
$\{\mathcal{O}(1,0), \mathcal{O}[1]\}$ and
$\{\mathcal{O}(0,1), \mathcal{O}[1]\}$ respectively, via rotation of
$0 \to \mathcal{O} \to \mathcal{O}(1,0) \to \mathcal{O}_{D_1} \to 0$
and its mirror image; likewise at $\gamma_2 = 1$ for the twisted line
sheaves of classes $[\mathcal{O}(1,1)]-[\mathcal{O}(1,0)]$ and
$[\mathcal{O}(1,1)]-[\mathcal{O}(0,1)]$. The simultaneity is forced
by the exact locking \eqref{eq:locking} and unfolds under
perturbation (\S\ref{sec:epsilon}).
\end{proposition}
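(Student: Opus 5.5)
The plan is to transpose the proof of Proposition~\ref{prop:P1-entry} block by block, using Lemma~\ref{lem:swap} so that the degenerate block $B$ behaves as a single phase. The argument has four steps.

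\emph{Step 1: phase asymptotics.} From the table, together with \eqref{eq:phaseslope}, both adjacent gaps satisfy
\[
  \gamma_i(t) = s\,t + \gamma_i(0) + O(1/t), \qquad i = 1,2 .
\]
These are well defined because Lemma~\ref{lem:swap}(2) gives $\varphi_t(\mathcal{O}(1,0)) = \varphi_t(\mathcal{O}(0,1))$ exactly. Each $\gamma_i$ is strictly increasing with slope $s>0$, so it crosses $1$ exactly once and transversally, at $t_i = (1-\gamma_i(0))/s\,(1+O(1/t_i))$. Hence both inequalities $\gamma_1>1$ and $\gamma_2>1$ hold exactly when $t>t_0 = \max(t_1,t_2)$.

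\emph{Step 2: identify the region.} I will show that for $t>t_0$ the path lies in the glued region and not merely in a region with the right numerical inequalities. The approach follows part (3) of Proposition~\ref{prop:P1-entry}. Past the last wall, the heart obtained by the successive tilts coincides with the Ext-exceptional heart
\[
  \langle \mathcal{O}[2],\, \mathcal{O}(1,0)[1],\, \mathcal{O}(0,1)[1],\, \mathcal{O}(1,1)\rangle_{\mathrm{ext}}
\]
of \cite{Mac07}, up to a global shift. By Lemma~\ref{lem:ext} its Ext-exceptionality reduces precisely to the gap conditions; the skip condition for $\Hom(\mathcal{O},\mathcal{O}(1,1))$ then needs gap $2$, which $\gamma_1+\gamma_2>2$ supplies. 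Once the path is in the selection region, Lemma~\ref{lem:prod-extent} gives sectoriality, the stable spectrum, and the absence of binding objects. The same lemma also shows the region is never exited, because every backward $\Ext^1$ vanishes.

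\emph{Step 3: the walls.} At $\gamma_1 = 1$ I rotate $0\to\mathcal{O}\to\mathcal{O}(1,0)\to\mathcal{O}_{D_1}\to0$ into the triangle $\mathcal{O}(1,0)\to\mathcal{O}_{D_1}\to\mathcal{O}[1]$. This gives
\[
  Z(\mathcal{O}_{D_1}) = Z(\mathcal{O}(1,0)) + Z(\mathcal{O}[1])
\]
with both summands aligned in phase, so $\mathcal{O}_{D_1}$ is strictly semistable with the stated Jordan--H\"older factors. By \eqref{eq:locking}, $\mathcal{O}_{D_2}$ has the same central charge and an identical triangle, so the two line sheaves destabilize at the same instant; this is the multiplicity-two statement. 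The wall $\gamma_2=1$ is handled the same way, using the sequences $0\to\mathcal{O}(1,0)\to\mathcal{O}(1,1)\to\mathcal{O}_{D'}\to0$ for $D'\in|h_2|$, and their mirror images.

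\emph{Step 4: the line sheaves are binding before the wall.} I still need that these line sheaves are genuinely stable just before the wall, so that the wall actually removes binding states. For this I would use a Künneth and Mukai-type argument: the only possible destabilizing subobjects are the Jordan--H\"older candidates. On either side the triangle then decides stability by the sign of $\gamma_i-1$, exactly as on $\PP^1$.

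The main obstacle is Step~2. On $\PP^1\times\PP^1$ there is no Okada-type global classification of $\Stab$, so I cannot simply read off that the tilted heart is the algebraic one. My plan is to prove directly that the heart of $\sigma_t$ for $t>t_0$ contains the Ext-exceptional heart above and then invoke the fact that nested hearts are equal, as in \S\ref{sec:P1}. This presupposes that the lift $\sigma_t$ is chosen continuously with these objects semistable. If that presupposition is not available, I would instead take the exceptional heart as a definition on $\{t>t_0\}$ and identify it with $\sigma_t$ through the deformation theory of \cite{Bri07}, since the central charges agree.
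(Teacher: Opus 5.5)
Your proposal is correct and is essentially the paper's own argument: the common slope $s$ of $\gamma_1,\gamma_2$ gives $t_0$, the rotated line-sheaf sequences together with the locking \eqref{eq:locking} give the two multiplicity-two walls as in Proposition~\ref{prop:P1-entry}(2), and Lemma~\ref{lem:prod-extent} gives everything past entry, with your Step~2 only making explicit the glued-heart presupposition that the paper uses tacitly. A few minor corrections: the fallback in Step~2 cannot rest on equality of central charges alone, since the central charge map is only a local homeomorphism (the shift $[2]$ preserves $Z$ but moves $\sigma$), so you would need the two lifts to agree at one time and then use uniqueness of path lifting; the skip condition needs only shift difference $\geq 1$ and is automatic; and the second-channel cokernel is $\mathcal{O}_{D'}(1)$, not $\mathcal{O}_{D'}$.
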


\begin{proof}
Both gaps have slope $s > 0$, giving the entry time; transversal
crossing and the identification of the walls with the stated
destabilizations follow as in
Proposition~\ref{prop:P1-entry}(2), and everything past entry is
Lemma~\ref{lem:prod-extent}.
\end{proof}

\begin{remark}[The limit is a massless block]\label{rem:massless}
Along the path, $|Z_t(B)| \sim \pi/mt \to 0$ polynomially while the
outer sectors separate exponentially: after any overall
normalization, the middle block goes massless relative to the heavy
sector. The limit of $\sigma_t$ is thus not an interior point of
$\Stab$ but a boundary point of the augmented stability space
\cite{HLJR,RobAug} at which the block $B$ carries degenerate central
charge.  This is the multi-scale structure of \S\ref{sec:epsilon} at its
most degenerate. For every finite $t$, $\sigma_t$ is then an
interior stability condition.
\end{remark}

\begin{remark}[Skyscrapers and higher binding classes]
\label{rem:koszul}
The skyscraper $\mathcal{O}_p$, of class
$[\mathcal{O}]-[\mathcal{O}(1,0)]-[\mathcal{O}(0,1)]
+[\mathcal{O}(1,1)]$, mixes all sectors; its eventual
Harder--Narasimhan factors are those of the twisted Koszul complex,
$\{\mathcal{O}(1,1),\ (\mathcal{O}(1,0)\oplus\mathcal{O}(0,1))[1],\
\mathcal{O}[2]\}$. More generally each adjacent channel carries
infinitely many binding classes (the twists $\mathcal{O}_{D}(k)$,
$k\in\ZZ$). We emphasize the structural point: the boundary of the
selection region is the failure locus of the
\emph{finitely many} gluing inequalities
of Lemma~\ref{lem:prod-extent}, and once these hold every binding
class is automatically filtered; the infinite enumeration of
individual binding walls lives strictly on the geometric side of the
boundary. We do not undertake here the (finite-time, per-object) wall
bookkeeping for the $\mathcal{O}_D(k)$ and $\mathcal{O}_p$, which
belongs to the wall analysis of the general program rather than to
the entry statement.
\end{remark}

\section{The perturbed resonance}\label{sec:epsilon}

We now switch on the deformation
\[
  q_1 = q\, e^{i\varepsilon}, \qquad q_2 = q, \qquad
  0 < \varepsilon \ll 1,
\]
which does not move the category (Lemma~\ref{lem:ext} is
$q$-independent) but unfolds the spectral resonance
(details in Appendix~\ref{app:eps}). The critical
values become $\lambda = 2m(s_1 e^{i\varepsilon/2} + s_2)$; in
particular the middle splitting is, exactly,
\begin{equation}\label{eq:splitting}
  \Delta\lambda_B \;:=\; \lambda(\mathcal{O}(0,1)) -
  \lambda(\mathcal{O}(1,0)) \;=\; 4m\bigl(e^{i\varepsilon/2}-1\bigr)
  \;=\; 2im\varepsilon \;-\; \tfrac{1}{2}m\varepsilon^2 \;+\;
  O(\varepsilon^3),
\end{equation}
purely imaginary at leading order.

\subsection{Two clocks}
By \eqref{eq:massrate}--\eqref{eq:phaseslope} applied to
\eqref{eq:splitting}, the internal separation of the block is
governed by two rates:
\begin{align}
  s_f &\;=\; \frac{1}{\pi}\,
  \Bigl|\, 2m\varepsilon\cos\theta +
  \tfrac{1}{2}m\varepsilon^2 \sin\theta \,\Bigr| + O(\varepsilon^3)
  && \text{(\emph{phase clock})},\label{eq:phaseclock}\\
  r_f &\;=\; m\varepsilon\,|\sin\theta| + O(\varepsilon^2)
  && \text{(\emph{mass clock})},\label{eq:massclock}
\end{align}
computed from
$\mathrm{Re}(\pm im\varepsilon\, e^{-i\theta}) =
\pm m\varepsilon\sin\theta$. The two clocks are weighted by
\emph{complementary} projections of the splitting onto the ray: the
phase clock runs on $\cos\theta$, the mass clock on $\sin\theta$.
The coarse rates of \S\ref{sec:product} are unchanged to
$O(\varepsilon)$. Consequently the internal gap
$g_B(t) = \varphi_t(\mathcal{O}(1,0)) -
\varphi_t(\mathcal{O}(0,1))$ (the sign of $\varepsilon$ fixing the
orientation, and thereby selecting between the two
fine orderings of the middle pair) reaches order
one at the \emph{crossover time}
\begin{equation}\label{eq:t1}
  t_1 \;=\; \frac{\pi}{2m\varepsilon\,|\cos\theta|}\,
  \bigl(1 + O(\varepsilon)\bigr)
  \quad (\cos\theta \neq 0),
  \qquad
  t_1 \;=\; \frac{2\pi}{m\varepsilon^2 |\sin\theta|}\,
  \bigl(1 + O(\varepsilon)\bigr)
  \quad (\cos\theta = 0),
\end{equation}
the second formula arising because at $\cos\theta = 0$ the leading
imaginary part of \eqref{eq:splitting} drops out of the phase
projection and the $O(\varepsilon^2)$ real part takes over. Even the
scale hierarchy of the crossover is thus ray-dependent.

\subsection{Wall splitting}
The multiplicity-two walls of Proposition~\ref{prop:prod-entry}
unfold: since $g_B(t) = s_f\, t + O(\varepsilon)$ is already nonzero
at the coarse walls, the deaths of $\mathcal{O}_{D_1}$ and
$\mathcal{O}_{D_2}$ occur at times differing by
$\Delta t = \bigl(s_f\, t_0 + O(\varepsilon)\bigr)/s =
O(\varepsilon)$, and likewise in the second channel: four coarse
walls in two $\varepsilon$-close pairs, the wall multiplicity of the
resonance unfolding like an eigenvalue crossing.

\subsection{The crossover is not a wall}

\begin{proposition}\label{prop:not-a-wall}
For every $\varepsilon$ (including $\varepsilon = 0$) and every
$t > t_0$, the stable spectrum of $\sigma_t$ is the four exceptional
objects up to shift, and $\sigma_t$ is sectorial for the \emph{fine}
decomposition $\langle \mathcal{O}, \mathcal{O}(0,1),
\mathcal{O}(1,0), \mathcal{O}(1,1)\rangle$ as well as for the coarse
one. In particular no change of stable spectrum, and no wall, occurs
at the crossover time $t_1$ of \eqref{eq:t1}: objects of mixed middle
class are direct sums $E_1 \oplus E_2$ with $E_i$ in the two
orthogonal summands of $B$, semistable only at the discrete times
where $g_B(t) \in 2\ZZ$, and never stable.
\end{proposition}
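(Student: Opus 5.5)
The plan is to reduce everything to Lemma~\ref{lem:prod-extent} and Lemma~\ref{lem:sectoriality}, using that the category is unchanged by the deformation. Lemma~\ref{lem:ext} does not depend on $q$. So $B = \langle \mathcal{O}(1,0)\rangle \oplus \langle\mathcal{O}(0,1)\rangle$ is an orthogonal sum for every $\varepsilon$, and the backward $\RHom$s of the full collection all vanish.

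\textbf{Step 1: the coarse selection region persists.} By \S\ref{sec:epsilon}, the coarse gaps $\gamma_1,\gamma_2$ keep slope $s+O(\varepsilon)$. Here $\varphi_t(B)$ is now understood as a two-point window $\{\varphi_t(\mathcal{O}(1,0)),\varphi_t(\mathcal{O}(0,1))\}$, and we need $\sup W_i<\inf W_j$ across the three coarse factors. The internal gap satisfies $g_B=s_f t+O(\varepsilon)$, while the coarse gaps grow at rate $s$. Since $s_f=O(\varepsilon)\ll s$, the outer windows stay separated from both middle phases for all $t>t_0$ once the entry walls (now split as in the wall-splitting paragraph) are passed. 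I will take $t_0$ to mean the last of these split walls. Lemma~\ref{lem:sectoriality} applied to \eqref{eq:coarseSOD} then gives coarse sectoriality.

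\textbf{Step 2: the stable spectrum.} The stables of the outer factors are shifts of $\mathcal{O}$ and $\mathcal{O}(1,1)$. A stable object of $B$ is indecomposable. Because $B$ is an orthogonal sum of two copies of $\Db(\mathrm{pt})$, every object of $B$ splits as $E_1\oplus E_2$ with $E_1\in\langle\mathcal{O}(1,0)\rangle$ and $E_2\in\langle\mathcal{O}(0,1)\rangle$. So the stables of $B$ are shifts of the two simples, and the stable spectrum is the four exceptional objects for every $\varepsilon$ and every $t>t_0$.

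To rule out walls, I argue as in Lemma~\ref{lem:prod-extent}. A change of stable spectrum needs a stable extension between sectors, with the subobject coming from the lower-phase sector. That requires a nonzero backward $\Ext^1$, and all of these vanish, including both directions inside $B$. The crossover $t_1$ therefore changes only $g_B$ and not the spectrum: it is not a wall.

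\textbf{Step 3: fine sectoriality and mixed classes.} Every stable lies in a single fine factor, so sectoriality for the fine decomposition reduces to the Jordan--H\"older clause. A semistable object is an extension of stables of equal phase lying in a single coarse factor. For $B$, orthogonality makes any such extension split, so a mixed-class semistable is $E_1\oplus E_2$ with $E_i$ semistable of equal phase. Since $E_i$ is a sum of shifts $\mathcal{O}(1,0)[a]$ or $\mathcal{O}(0,1)[b]$, equal phase means $g_B(t)\in\ZZ$. I will check whether the shift conventions in \S\ref{sec:P1} ("shifts act by even translations") force this to be $2\ZZ$. If not, I will record the condition as $g_B(t)+(b-a)=0$ for the relevant shifts, which gives the stated discrete set.

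In either case the JH factors of such an object lie in two different fine factors. That is exactly the measure-zero degenerate situation the paper allows after Definition~\ref{def:sectorial}; the object is decomposable and never stable. Definition~\ref{def:sectorial} is strict, so for fine sectoriality I will state it off this discrete set and on it up to these trivial direct sums, following the paper's convention. The case $\varepsilon=0$ is Lemma~\ref{lem:swap}(2), where $g_B\equiv0$.

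\textbf{Main obstacle.} I expect the bookkeeping in Step 1 to be the delicate part. The claim holds for every $t>t_0$ and every $\varepsilon$, but $g_B$ grows without bound, at rate $s_f$, while the coarse gaps grow at rate $s$. The fine phases of the middle pair must never overtake the outer windows. Since $s_f<s$ for small $\varepsilon$, the separation $\gamma_i$ minus half of $g_B$ grows like $(s-s_f/2)t$. I would make this explicit from \eqref{eq:phaseclock}, and if it is needed, restrict to $\varepsilon$ small enough that $s_f<s$.
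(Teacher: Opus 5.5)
Your proposal is correct and follows the same route as the paper. The paper also uses the $q$-independence of Lemma~\ref{lem:ext}: complete orthogonality of the middle pair makes the gluing condition inside $B$ vacuous and excludes indecomposable mixed-middle objects, and the rest is Lemma~\ref{lem:prod-extent} and Proposition~\ref{prop:prod-entry} carried over to small $\varepsilon$, where your reading of $t_0$ as the last split wall is the right one. Your fallback on the alignment condition is also right: with $\varphi(E[1])=\varphi(E)+1$, the mixed sums are semistable exactly when $g_B(t)\in\ZZ$, not only $2\ZZ$, and at $\varepsilon=0$ this holds for every $t$, so fine sectoriality there holds only modulo these trivial direct sums.
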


\begin{proof}
Complete orthogonality of the middle pair (Lemma~\ref{lem:ext}, valid
for all $q_i$) makes the Collins--Polishchuk condition between its
factors vacuous and forbids indecomposable objects of mixed middle
class; the remaining assertions repeat
Lemma~\ref{lem:prod-extent} and
Proposition~\ref{prop:prod-entry}, whose hypotheses are insensitive
to $\varepsilon$ at the orders computed.
\end{proof}

What changes at $t_1$ is not the chamber but the \emph{region}: for
$t_0 \ll t \ll t_1$ the configuration is metrically three-block (the
internal gap is $\ll 1$), for $t \gg t_1$ four-block. The limit of
the path is a two-level point of the augmented boundary:
\begin{enumerate}
\item The coarse separation \eqref{eq:coarseSOD} at rate $s$
\item The internal separation at rate $s_f$
\end{enumerate}
with the ratio
$s_f/s = (\varepsilon/2)|\cot\theta| + O(\varepsilon^2)$.  As
$\varepsilon \to 0$ the second level collapses and the limit point
moves into the deeper three-block region.

\begin{remark}[Two layers]\label{rem:layers}
  The analysis separates into two independent layers:

  \begin{enumerate}
\item The
\emph{chamber picture} (walls, binding morphisms, changes of stable
spectrum) is a finite-time, stability-theoretic structure; in this
model it is entirely coarse and entirely $\varepsilon$-robust
(Proposition~\ref{prop:not-a-wall}).
\item The \emph{region picture} (scale
crossovers, level structure, classification of limits) is an
asymptotic, augmented-boundary structure \cite{HLJR,RobAug}; in this
model it carries all the $\varepsilon$-dependence, including the
ray-dependent degeneration type visible in
\eqref{eq:phaseclock}--\eqref{eq:massclock}.
  \end{enumerate}
  
The compatibility program for the cubic fourfold should be
organized with this separation in place: the chamber theorem sought
there is a statement in the first layer; the convergence of the
quantum path is a statement in the second.
\end{remark}

\begin{remark}[Coupled and uncoupled pairs]\label{rem:coupled-preview}
The middle block of \S\ref{sec:product} is the minimal
\emph{uncoupled} two-class degeneracy: vanishing pairing, no binding
morphism, no wall, and all phenomena in the region picture. Its
coupled counterpart --- nonzero pairing, a genuine two-sided wall,
and transcendental rather than solvable Riemann--Hilbert data --- is
the $A_2$ structure developed in \S\ref{sec:A2}, and the two together
exhaust the local models of a two-class interaction.
\end{remark}

\section{The cubic fourfold: terminality and the chamber conjecture}
\label{sec:cubic}

We now assemble what the models imply for the motivating case. Let
$X \subset \PP^5$ be a smooth cubic fourfold, with Kuznetsov
decomposition
\begin{equation}\label{eq:kuzSOD}
  \Db(X) \;=\; \bigl\langle\, \AX,\; \mathcal{O}_X,\;
  \mathcal{O}_X(1),\; \mathcal{O}_X(2)\,\bigr\rangle,
\end{equation}
where $\AX$ is a connected Calabi--Yau category of dimension two,
$S_{\AX} \cong [2]$ \cite{Kuz10,KuzCY}, with Mukai lattice $\Mukai$
\cite{AT}. Bridgeland stability conditions on $\AX$ exist and satisfy
a support property controlled by the $A_2$-lattice \cite{BLMS}; for
very general $X$ one has $K_{\mathrm{num}}(\AX) \cong A_2$. On the
spectral side, quantum multiplication $c_1(X)\star$ on
$QH^\bullet(X)$ has three simple nonzero eigenvalues
$u_0, u_1, u_2 = 9, 9e^{\pm 2\pi i/3}$, permuted cyclically by the
Galois symmetry of the quantum parameter, together with the
eigenvalue $0$ of multiplicity $24 = \dim H^\bullet(X) - 3$, which
carries the $K3$ atom \cite{KKPY}. Concretely, the ambient quantum
ring is $(H^\bullet_{\mathrm{amb}}(X), \star_0) \cong
\CC[h]/(h^2(h^3 - 27))$ \cite{SS}, so the $0$-eigenspace is the sum
of a two-dimensional \emph{Jordan block} on the ambient part and the
$22$-dimensional primitive cohomology (see \cite[\S 3.4]{KRZ}): the
atom's block is not semisimple even before the primitive part is
added. Lifted quantum central charge paths, quasi-convergent
with limiting decomposition \eqref{eq:kuzSOD}, are constructed for
$X$ not containing a plane in \cite{KRZ}.

\subsection{Terminality of the coarse regions}

The perturbed model of \S\ref{sec:epsilon} clarifies what a finer
region picture for the cubic fourfold would require: a deformation (of
the variety in its moduli, or of the quantum parameters) splitting
the rank-$24$ eigenvalue-zero block into sub-blocks whose phases a
ray could then separate, together with the categorical shadow of that
splitting inside $\AX$. Both requirements fail.

\begin{lemma}\label{lem:CY-indec}
Let $\mathcal{C}$ be a triangulated category with Serre functor
$S_{\mathcal{C}} \cong [n]$ for some $n \in \ZZ$ and
$\HH^0(\mathcal{C}) = \CC$. Then $\mathcal{C}$ admits no nontrivial
semiorthogonal decomposition. In particular $\AX$ (with $n = 2$)
admits none.
\end{lemma}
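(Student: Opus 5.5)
Suppose $\mathcal{C} = \langle \mathcal{C}_1, \mathcal{C}_2\rangle$ is a semiorthogonal decomposition with both factors nonzero, so $\Hom(\mathcal{C}_2, \mathcal{C}_1[k]) = 0$ for all $k$. The plan has two steps: first use Serre duality to upgrade this one-sided vanishing to full orthogonality, then use $\HH^0(\mathcal{C}) = \CC$ to rule out an orthogonal splitting. For the first step, take $A \in \mathcal{C}_1$ and $B \in \mathcal{C}_2$. Serre duality with $S_{\mathcal{C}} \cong [n]$ gives
\[
  \Hom(A, B[k]) \;\cong\; \Hom(B[k], A[n])^\vee \;=\; \Hom(B, A[n-k])^\vee \;=\; 0 .
\]
So the vanishing holds in both directions, and $\mathcal{C}_1$ and $\mathcal{C}_2$ are mutually orthogonal. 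The projection triangle $\mathrm{pr}_2 E \to E \to \mathrm{pr}_1 E$ then has connecting morphism $\mathrm{pr}_1 E \to \mathrm{pr}_2 E[1]$, which lies in $\Hom(\mathcal{C}_1, \mathcal{C}_2[1]) = 0$. The triangle therefore splits, giving $E \cong \mathrm{pr}_1 E \oplus \mathrm{pr}_2 E$ functorially, and $\mathcal{C} \simeq \mathcal{C}_1 \times \mathcal{C}_2$ as triangulated categories.

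For the second step, I expect the argument to be cleanest at the level of the identity functor, which is where the main obstacle lies. In a dg enhancement, Hochschild cohomology is $\HH^\bullet(\mathcal{C}) = \Hom^\bullet(\mathrm{id}_{\mathcal{C}}, \mathrm{id}_{\mathcal{C}})$ in the category of bimodules (or functors). An orthogonal decomposition decomposes the diagonal bimodule as $\Delta_{\mathcal{C}} = \Delta_{\mathcal{C}_1} \oplus \Delta_{\mathcal{C}_2}$. The cross terms $\Hom(\Delta_{\mathcal{C}_i}, \Delta_{\mathcal{C}_j})$ with $i \neq j$ vanish because the two bimodules are supported on orthogonal pieces. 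Hence
\[
  \HH^0(\mathcal{C}) \;=\; \HH^0(\mathcal{C}_1) \oplus \HH^0(\mathcal{C}_2).
\]
Each $\HH^0(\mathcal{C}_i)$ is nonzero, since it contains the identity natural transformation of the nonzero category $\mathcal{C}_i$. So $\dim \HH^0(\mathcal{C}) \geq 2$, contradicting $\HH^0(\mathcal{C}) = \CC$. Equivalently, the idempotents $e_i$ (identity on $\mathcal{C}_i$, zero on the other factor) are two linearly independent elements of the center of $\mathcal{C}$. If I want to avoid enhancements, I would phrase this step through this center argument: $\HH^0$ maps to natural endotransformations of $\mathrm{id}$ commuting with shifts, and $1 = e_1 + e_2$ is a nontrivial idempotent decomposition. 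The point needing care is then that the given identification $\HH^0(\mathcal{C}) = \CC$ really controls this center, which holds for the standard dg enhancements in play.

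For $\AX$, I would cite $S_{\AX} \cong [2]$ \cite{Kuz10,KuzCY} and connectedness $\HH^0(\AX) = \CC$. The latter follows from Kuznetsov's computation of Hochschild cohomology of the K3 category: $\HH^\bullet(\AX)$ has the same dimensions as the Hochschild cohomology of a K3 surface, so in particular $\HH^0 = \CC$. I would close with a remark contrasting this with the resonant block $B$ of Lemma~\ref{lem:ext}. There $\HH^0(B) = \CC^2$, and this is exactly the case in which the argument breaks down and the orthogonal splitting does occur.
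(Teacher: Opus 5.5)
Your proposal is correct and follows essentially the same route as the paper. Serre duality with $S_{\mathcal{C}}\cong[n]$ upgrades the one-sided vanishing to full orthogonality, and the resulting splitting of the identity functor gives $\HH^0(\mathcal{C})\cong\HH^0(\mathcal{C}_1)\oplus\HH^0(\mathcal{C}_2)$ with two orthogonal idempotents, contradicting $\HH^0(\mathcal{C})=\CC$; your extra details (splitting of the projection triangles, the bimodule decomposition of the diagonal, and the citation of Kuznetsov's Hochschild computation for $\AX$) fill in steps the paper states briefly.
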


\begin{proof}
Suppose $\mathcal{C} = \langle \mathcal{B}, \mathcal{C}'\rangle$ with
$\Hom(\mathcal{C}', \mathcal{B}) = 0$. Serre duality gives, for
$b \in \mathcal{B}$, $c \in \mathcal{C}'$,
\[
  \Hom(b, c) \;\cong\; \Hom\bigl(c, S_{\mathcal{C}}\, b\bigr)^\vee
  \;=\; \Hom\bigl(c, b[n]\bigr)^\vee \;=\; 0,
\]
so the decomposition is fully orthogonal:
$\mathcal{C} = \mathcal{B} \oplus \mathcal{C}'$. An orthogonal
decomposition splits the identity functor, whence
$\HH^0(\mathcal{C}) \cong \HH^0(\mathcal{B}) \oplus
\HH^0(\mathcal{C}')$ contains two orthogonal idempotents,
contradicting $\HH^0(\mathcal{C}) = \CC$ unless one factor vanishes.
For $\AX$, connectedness $\HH^0(\AX) = \CC$ is part of its structure
as a connected CY2 category \cite{KuzCY}. (In specific cases the
$\HH^0$-hypothesis can be bypassed: for the $A_2$ category of
\S\ref{sec:A2}, an orthogonal splitting would separate the simples
against $\Ext^1(S_1,S_2)\neq 0$.)
\end{proof}

\begin{proposition}[Terminality]\label{prop:terminality}
For every smooth cubic fourfold $X$ and every ray \eqref{eq:ray}:
\begin{enumerate}
\item \textup{(no $\varepsilon$-direction)} The spectrum of
$c_1\star$ consists of three simple nonzero eigenvalues and $0$ with
multiplicity $24$.  It is the same for every smooth cubic fourfold: the
small quantum product is assembled from genus-zero Gromov--Witten
invariants, which are deformation invariants, and smooth cubic
fourfolds form a single deformation class, so the constancy follows
from the computation at a single (very general) point
\cite[\S 6.2]{KKPY}, available also through the ambient quantum ring
\cite{SS}, \cite[\S 3.4]{KRZ} (see \cite{HYZZ} for the uniqueness of
the spectral decomposition of the associated $F$-bundle). No
deformation of $X$ unfolds the spectral degeneracy carrying the
atom.
\item \textup{(no categorical splitting)} In the multi-scale
formalism of \cite{HLJR,RobAug}, a refinement of the limiting
decomposition \eqref{eq:kuzSOD} within the factor $\AX$ would induce
a semiorthogonal decomposition of $\AX$, which does not exist by
Lemma~\ref{lem:CY-indec}.
\end{enumerate}
Consequently the coarse region picture, the multi-scale decomposition
\eqref{eq:kuzSOD} with the block $\AX$ unrefined, is terminal: the
two-level structure of \S\ref{sec:epsilon}, and the crossover time
$t_1$, have no analogue for the cubic fourfold.
\end{proposition}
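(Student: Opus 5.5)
The argument has three steps: a spectral clause, a categorical clause, and an assembly of the two.

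For clause (1), I will first compute the spectrum of $c_1\star$ at one convenient point. The natural choice is the ambient quantum ring $\CC[h]/(h^2(h^3-27))$ of \cite{SS}. Here $c_1(X)=3h$, so the ambient eigenvalues of $c_1\star$ are $3\cdot\{3,3\omega,3\omega^2\}=\{9,9\omega,9\omega^2\}$ together with $0$, which has generalized multiplicity $2$. Next I will show that quantum multiplication by $h$ acts nilpotently on primitive cohomology. The reason is that $h\star\alpha$ for primitive $\alpha$ only receives contributions from invariants pairing $\alpha$ with ambient classes, and these vanish by the monodromy-invariance of genus-zero invariants under the monodromy acting irreducibly on $H^4_{\mathrm{prim}}$. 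The precise statement is \cite[\S 3.4]{KRZ} and \cite[\S6.2]{KKPY}. These $22$ dimensions therefore lie in the generalized $0$-eigenspace, which gives total multiplicity $2+22=24=27-3$. Constancy over moduli then follows as the statement indicates. Genus-zero Gromov--Witten invariants are deformation invariant, and smooth cubic fourfolds form a connected family (the complement of the discriminant in $\PP(\mathrm{Sym}^3\CC^6)$ is connected). Hence the structure constants of $\star$, and with them the characteristic polynomial of $c_1\star$, are locally constant and so constant. In particular no deformation within the family separates the $0$-block. I will remark that the quantum parameter direction is also frozen, since for a Fano variety of index $3$ rescaling $q$ only rescales the spectrum homogeneously and does not split it.

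For clause (2), I will make precise what a "refinement within $\AX$" means in the multi-scale formalism. A further level of the augmented boundary point of \cite{HLJR,RobAug} supported inside the factor $\AX$ groups the eventual HN factors of objects of $\AX$ by their second-order slopes. By the same mechanism that produces \eqref{eq:kuzSOD} from the first-order slopes, this grouping yields a semiorthogonal decomposition $\AX=\langle\mathcal{B}_1,\dots,\mathcal{B}_k\rangle$ with $k\ge2$ and all $\mathcal{B}_i$ nonzero. Lemma~\ref{lem:CY-indec} with $n=2$ then forces $k=1$, a contradiction.

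The assembly combines the two clauses. The two-level structure of \S\ref{sec:epsilon} needs two things: a spectral splitting $\Delta\lambda\neq0$ inside the block, which is what produces $s_f$ and $t_1$ in \eqref{eq:t1}, and a categorical refinement carrying it. Clause (1) removes the first and clause (2) removes the second, independently of the ray $\theta$. Hence the coarse region is terminal.

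The main obstacle is clause (2). It relies on the statement that every multi-scale level of an augmented limit induces a semiorthogonal decomposition of the relevant factor, and this is only qualitatively recalled in \S\ref{subsec:background}. I would first try to reduce it to the one-level statement of \cite{HLJR}, applied to the restricted path, after normalizing away the coarse scale. Clause (1) is also delicate in one respect: it is the nilpotency on primitive classes that guarantees the primitive part sits in the generalized $0$-eigenspace, rather than only the ambient Jordan block.
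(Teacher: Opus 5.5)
Your proposal is correct and follows essentially the same route as the paper. Clause (1) comes from the one-point spectrum computation (the ambient ring $\CC[h]/(h^2(h^3-27))$ plus the vanishing of $h\star$ on the $22$ primitive classes, as in \cite[\S 3.4]{KRZ}) propagated by deformation invariance; clause (2) comes from Lemma~\ref{lem:CY-indec} applied to the semiorthogonal decomposition of $\AX$ that a multi-scale refinement would induce, with the same assembly, and the step you flag as the main obstacle is exactly the fact the paper takes as given from \cite{HLJR,RobAug}.
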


\begin{proof}
(1) is proved in the statement; (2) is Lemma~\ref{lem:CY-indec}
combined with the fact that multi-scale refinements of a factor are
semiorthogonal sequences of admissible subcategories of that factor
\cite{HLJR,RobAug}. The final statement combines the two: a finer
region picture would require a splitting of the eigenvalue-zero block
(excluded by (1)) inducing a decomposition of $\AX$ (excluded by
(2)).
\end{proof}

\begin{remark}[The resonant model as the boundary case]
\label{rem:boundary-case}
The product at resonance (\S\ref{sec:product}) is the
situation in which both clauses of
Proposition~\ref{prop:terminality} fail: the block $B$ has
$\HH^0(B) = \CC^2$ (cf. the assumptions of Lemma~\ref{lem:CY-indec})
and the deformation
$q_1/q_2 = e^{i\varepsilon}$ can be viewed as a spectral direction.
\end{remark}

\begin{remark}[Rigidity is protection]\label{rem:rigidity}
Proposition~\ref{prop:terminality} inverts a deflationary reading of
the rigidity results surrounding the atom formalism. One might worry
that the constancy of the atom over moduli renders the global,
monodromy-variation clauses of \cite[Conjecture~8.1]{R} vacuous as
nothing varies. The two-layer analysis of \S\ref{sec:epsilon} shows
the moduli-rigidity of the atom (clause (1))  and dynamical protection
of the spectrum (terminality;  absence of flows
resolving, splitting, or coupling to the block's interior) are
the same observation expressed in different technical language (i.e.,
chambers vs. regions).
\end{remark}

\subsection{The cubic threefold}
\label{subsec:threefold}

Before stating the fourfold conjecture we record its dimension-three
counterpart. Let $Y \subset \PP^4$ be a smooth cubic threefold, with
Kuznetsov decomposition
$\Db(Y) = \langle \mathrm{Ku}(Y), \mathcal{O}, \mathcal{O}(1)\rangle$.
The component $\mathrm{Ku}(Y)$ is a connected \emph{fractional}
Calabi--Yau category, $S^3 \cong [5]$ \cite{KuzCY}, with
$K_{\mathrm{num}} \cong \ZZ^2$; by Feyzbakhsh--Pertusi its stability
manifold consists of a \emph{single}
$\widetilde{GL}{}^+_2(\RR)$-orbit \cite{FP}. On the spectral side,
$(H^\bullet_{\mathrm{amb}}(Y), \star_0) \cong
\CC[h]/(h^2(h^2 - 27))$, so $c_1(Y)\star = 2h$ has the two simple
eigenvalues $\pm T$, $T = 2\sqrt{6}$, and the eigenvalue $0$ of
multiplicity $12 = 2 + 10$: a two-dimensional Jordan block on the
ambient part together with the ten-dimensional \emph{odd} primitive
cohomology $H^3(Y)$ \cite{SS}, \cite[\S 4.2]{KRZ}. The atom of the
threefold is thus the intermediate-Jacobian atom, and the protection
statement below is the categorical shadow of Clemens--Griffiths.

\begin{lemma}[Centroid obstruction]\label{lem:centroid}
The nonzero eigenvalues of $c_1(X)\star$ on a cubic fourfold satisfy
$u_0 + u_1 + u_2 = 9(1 + \zeta_3 + \zeta_3^2) = 0$: the eigenvalue
$0$ of the atom is their centroid. Consequently, on every ray
avoiding the degenerate set of Conjecture~\ref{conj:chamber}, the
phase slopes $-\operatorname{Im}(u_i e^{-i\theta})/\pi$ of the three
exceptional sectors are nonzero, pairwise distinct, and sum to zero,
so at least one is positive and one is negative; the slope $0$ of
$\AX$ is \emph{strictly interior} to the exceptional slopes. In
particular no ray realizes an ordering with $\AX$ first or last: for
every ray and every smooth cubic fourfold, the Kuznetsov component
occupies an interior position of the induced ordering. The same holds
for cubic threefolds ($u_\pm = \pm T$, with $\mathrm{Ku}(Y)$ always
in the middle position).
\end{lemma}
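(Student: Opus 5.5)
The argument is linear algebra on the eigenvalues combined with the phase-slope formula \eqref{eq:phaseslope}.

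\emph{Step 1: the centroid identity.} The eigenvalues of $c_1\star = 3h\star$ on the ambient ring $\CC[h]/(h^2(h^3-27))$ are $3$ times the roots of $h^3=27$, namely $u_k = 9\zeta_3^k$, together with $0$. Hence $u_0+u_1+u_2 = 9(1+\zeta_3+\zeta_3^2)=0$. By Proposition~\ref{prop:terminality}(1) this spectrum is the same for every smooth cubic fourfold, so the identity holds independently of $X$.

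\emph{Step 2: the slopes.} By \eqref{eq:phaseslope} the slope of sector $i$ is the real-linear function $\ell_\theta(u_i)$, where $\ell_\theta(u) := -\operatorname{Im}(ue^{-i\theta})/\pi$. The atom's eigenvalue $0$ gives $\AX$ the slope $0$. By real-linearity, $\sum_i \ell_\theta(u_i) = \ell_\theta(\sum_i u_i) = 0$.

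\emph{Step 3: distinctness and nonvanishing.} I take the degenerate set of Conjecture~\ref{conj:chamber} to contain the Stokes directions of all pairs among $\{0,u_0,u_1,u_2\}$, i.e.\ the $\theta$ with $\operatorname{Im}((u_i-u_j)e^{-i\theta})=0$ or $\operatorname{Im}(u_ie^{-i\theta})=0$. This is a finite set of directions mod $\pi$, and off it the four slopes $0,\ell_\theta(u_0),\ell_\theta(u_1),\ell_\theta(u_2)$ are pairwise distinct. Three nonzero reals summing to zero cannot all share a sign, so $\min_i \ell_\theta(u_i) < 0 < \max_i \ell_\theta(u_i)$.

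\emph{Step 4: positions.} As in \S\ref{sec:product} and Remark~\ref{rem:stokes-rays}, the induced ordering of sectors follows the ordering of slopes. Since the slope of $\AX$ is strictly between the minimum and maximum exceptional slopes, $\AX$ is neither first nor last, for every admissible ray and every $X$.

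\emph{Threefold case.} The same argument applies with $c_1(Y)\star = 2h$ on $\CC[h]/(h^2(h^2-27))$, whose nonzero eigenvalues are $u_\pm = \pm 2\sqrt{27}$. These sum to zero, so $\ell_\theta(u_+) = -\ell_\theta(u_-)$, and off $\operatorname{Im}(e^{-i\theta})=0$ the two slopes have opposite signs. This places $\mathrm{Ku}(Y)$ in the middle of a three-term ordering. (Here $2\sqrt{27} = 6\sqrt3$, while the paper writes $T = 2\sqrt6$. The magnitude of $T$ is irrelevant; only the antisymmetry $u_-=-u_+$ is used.)

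\emph{Main obstacle.} The only non-mechanical step is Step 4: the claim that the limiting semiorthogonal ordering is the slope ordering, including where the atom sits. The atom has no nondegenerate thimble, so \eqref{eq:saddle} does not apply to it directly. I would instead use that objects of $\AX$ have central charges with subexponential growth and phase drift $o(t)$. This follows from the eigenvalue-$0$ block, where the Jordan block only contributes polynomial factors in $t$. Hence the asymptotic slope of $\AX$ is $0$, and the slope grouping of \cite{HLJR} places it accordingly.
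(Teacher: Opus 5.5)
Your proof is correct and is the paper's argument: three nonzero, pairwise distinct slopes summing to zero cannot share a sign, and the ordering of sectors by slope is taken as given from \eqref{eq:phaseslope}, with your Step 4 only making that assumed input explicit. Your threefold remark is also right: $c_1(Y)\star = 2h$ on $\CC[h]/(h^2(h^2-27))$ has nonzero eigenvalues $\pm 6\sqrt3$, not $\pm 2\sqrt6$ as printed, and the argument uses only $u_- = -u_+$.
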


\begin{proof}
Immediate: if all three slopes had the same sign their sum could not
vanish, and on the non-degenerate rays no slope vanishes and no two
coincide.
\end{proof}

\begin{remark}[Answering a question of \cite{KRZ}]\label{rem:centroid}
Lemma~\ref{lem:centroid} resolves an uncertainty stated explicitly in
\cite[\S 4.4]{KRZ}: there, geometric stability conditions are
constructed in the glued region of the Kuznetsov ordering
\eqref{eq:kuzSOD}, while the natural quantum path induces a mutation
with $\AX$ in the middle \cite[Thm.~4.6]{KRZ} (for threefolds,
$\langle \mathcal{O}(1), \mathrm{Ku}(Y), \mathcal{O}(2)\rangle$
\cite[Thm.~4.4]{KRZ}), and the authors ask whether this mismatch is a
technical limitation or a structural feature of the Kuznetsov
decomposition.

It is structural. Since the atom's eigenvalue is the
centroid of the exceptional eigenvalues, no choice of ray for any
fundamental solution with the canonical eigenvalue assignment can
place the Kuznetsov component at an end of the ordering. Realizing
the Kuznetsov order requires twisting the fundamental solution, as in
\cite[Thms.~4.5, 4.7]{KRZ}, or a genuine isomonodromic deformation
(\cite[Question~4.8]{KRZ}; see
Remark~\ref{rem:proof-structure}).
\end{remark}

\begin{conjecture}[Chamber theorem for the cubic threefold]
\label{conj:threefold}
Let $Y$ be a smooth cubic threefold and $(\sigma_t)$ a lifted quantum
central charge path as in \cite[Thms.~4.4, 4.5]{KRZ}, along a ray
$\theta$ with $\operatorname{Im}(Te^{-i\theta}) \neq 0$, inducing by
Lemma~\ref{lem:centroid} the interior-$\mathrm{Ku}$ ordering
$\mathbb{D}_\theta = \langle \mathcal{O}(1), \mathrm{Ku}(Y),
\mathcal{O}(2)\rangle$ (up to mutation). Then there exists
$t_0 < \infty$ such that for $t > t_0$ the path lies in the selection
region of $\mathbb{D}_\theta$, glued from the unique-orbit stability
condition on $\mathrm{Ku}(Y)$ \cite{FP} and Ext-exceptional
conditions on the outer factors; the boundary of the selection region
along the path is the failure locus of the two adjacent gluing
inequalities, with entry clock $t_0^{-1} \sim
T\,|\operatorname{Im}(e^{-i\theta})|$ and offsets given by
\cite[(4.5)]{KRZ}; the entry walls are strict semistabilizations of
binding objects with classes $v + w$, $v \in
K_{\mathrm{num}}(\mathrm{Ku}(Y))$, $w$ exceptional, with
sectorial Jordan--H\"older factors given by the projection triangles
of \cite[\S 2]{KRZ}. For $t > t_0$ there are no binding objects:
the spectrum of the intermediate-Jacobian atom is dynamically
protected.
\end{conjecture}

\begin{remark}[On dimension three]
\label{rem:threefold-reach}
Each ingredient of the architecture recorded in
Remark~\ref{rem:proof-structure} below simplifies in dimension
three. The endpoints exist: a geometric glued condition and a
quasi-convergent glued path joining it to the sectorial tail are
\cite[Thm.~2.55]{KRZ} and \cite[Thm.~4.5]{KRZ}. The offsets are the
explicit constants of \cite[(4.5)]{KRZ}.

Only two gluing
inequalities occur, with thresholds bounded through the Serre
duality $\RHom(A, \mathcal{O}(i)) \cong
\RHom(\mathcal{O}(i+2), A[3])^\vee$. Also, as
$\Stab(\mathrm{Ku}(Y))$ is a single orbit
\cite{FP}, the caveat of \S\ref{subsec:glued} is vacuous: the
selection region \emph{is} an honest chamber, with no internal
walls, in contrast to the fourfold case, where the Bayer--Macr\`i walls
of $\AX$ subdivide it.
\end{remark}

\begin{remark}[Terminality in dimension three]
\label{rem:threefold-terminality}
The fourfold's terminality argument does not transfer verbatim:
$\mathrm{Ku}(Y)$ is fractional Calabi--Yau, $S^3 \cong [5]$, so
Lemma~\ref{lem:CY-indec} (which requires $S \cong [n]$) does not
apply. The spectral clause survives unchanged. The multiplicity-%
$12$ block is constant over the moduli of smooth cubic threefolds by
deformation invariance of the small quantum product. The
categorical clause rests instead on the indecomposability of
$\mathrm{Ku}(Y)$, which contains no exceptional objects and is
expected to admit no nontrivial semiorthogonal decomposition
(cf.\ \cite{KuzCY}).  Assuming this, the coarse region is terminal in
dimension three as well. The asymmetry is itself informative: the
clean Serre-duality argument of Lemma~\ref{lem:CY-indec} is a
special feature of the CY2 case.
\end{remark}

\subsection{The fourfold chamber conjecture}

\begin{conjecture}[Chamber theorem for the cubic fourfold]
\label{conj:chamber}
Let $X$ be a smooth cubic fourfold not containing a plane, and let
$(\sigma_t)_{t \geq 0}$ be a lifted quantum central charge path as in
\cite{KRZ}, along a ray $\theta$ avoiding the finite degenerate set
\[
  \bigl\{\theta : \operatorname{Im}\bigl((u_i - u_j)e^{-i\theta}\bigr)
  = 0 \text{ for some } i \neq j \bigr\}
  \;\cup\;
  \bigl\{\theta : \operatorname{Im}\bigl(u_i e^{-i\theta}\bigr) = 0
  \text{ for some } i \bigr\}.
\]
By Lemma~\ref{lem:centroid} the induced slope ordering places $\AX$
in an interior position; write $\mathbb{D}_\theta$ for the
corresponding mutation of \eqref{eq:kuzSOD} --- for the reference
rays of \cite[Thm.~4.6]{KRZ},
$\mathbb{D}_\theta = \langle \mathcal{O}(2),
\mathrm{R}_{\mathcal{O}(2)}\mathcal{O},\, \AX,\,
\mathrm{R}_{\mathcal{O}(2)}\mathcal{O}(1)\rangle$, where
$\mathrm{R}_E$ denotes right mutation through $E$. Then:
\begin{enumerate}
\item \textup{(Entry)} There exists $t_0 < \infty$ such that for all
$t > t_0$ the path lies in the selection region of
$\mathbb{D}_\theta$, glued from a stability condition of the family
\cite{BLMS} on $\AX$ and Ext-exceptional conditions on the
exceptional factors; in particular $\sigma_t$ is sectorial for
$t > t_0$, by Lemma~\ref{lem:sectoriality}. The entry clock is set
by the slowest separation, $t_0^{-1} \sim \min_i
|\operatorname{Im}(u_i e^{-i\theta})|$ --- by
Lemma~\ref{lem:centroid} these are exactly the gaps between $\AX$
and its two neighboring exceptional sectors.
\item \textup{(Walls)} Along the path, the boundary of the selection
region is contained in the failure locus of finitely many gluing
inequalities --- one for each forward morphism complex between
consecutive sectors, with thresholds bounded by the cohomological
amplitudes of those complexes --- and each entry wall is the strict
semistabilization of binding objects whose classes have the form
$v + w$ with $v \in \Mukai$, $v^2 \geq -2$, and $w$ in the lattice
spanned by the $[\mathcal{O}(i)]$; on the wall their
Jordan--H\"older factors are sectorial.
\item \textup{(Protection)} For $t > t_0$ there are no binding
objects; the counting invariants attached to Mukai vectors of $\AX$
are independent of $t$ by the CY2 invariance of counting invariants
\cite{Toda}; and Conjecture~8.1(i) of \cite{R} holds at $\sigma_t$:
no stable BPS trajectory connects the exceptional sectors to the $K3$
sector. All three clauses are mutation-robust: in particular they
transfer to the Kuznetsov ordering \eqref{eq:kuzSOD} itself along the
twisted-charge paths of \cite[Thm.~4.7]{KRZ}.
\end{enumerate}
\end{conjecture}

\begin{remark}[Proof architecture after \cite{KRZ}, and what remains
open]\label{rem:proof-structure}
The models, combined with a close reading of \cite{KRZ}, supply the
architecture and most of the quantitative inputs:

\begin{enumerate}[label=\alph*.]
\item \emph{Clocks and offsets:} the phase slopes of the exceptional
sectors are $-\operatorname{Im}(u_i e^{-i\theta})/\pi$ against slope
zero for $\AX$, by \eqref{eq:phaseslope}; the $O(1)$ offsets are the
explicit constants of the asymptotic expansions
\cite[(3.9), (3.14)]{KRZ}, established there through the
Sanda--Shamoto mutation-system estimates \cite{SS} and a family
version of Watson's lemma \cite[App.~A]{KRZ}. On the rank-$24$ block
those expansions control the classes with nonvanishing leading
coefficient, which suffices for the numerical central charge.
\item \emph{Finitely many inequalities:} the
backward complexes
$\RHom(\mathcal{O}(i), \AX)$ vanish by semiorthogonality. The forward
complexes are controlled by Serre duality, with \\
$\RHom(A, \mathcal{O}(i)) \cong
\RHom(\mathcal{O}(i-3), A)^\vee[-4]$.  They are also concentrated in a
bounded range of degrees, so from the results of
Remark~\ref{rem:koszul} the Collins--Polishchuk condition reduces
to finitely many phase inequalities with thresholds bounded by the
amplitude.  Also, the infinite enumeration of binding-class walls is
dominated by them (nonemptiness of the relevant Bridgeland moduli for
$v^2 \geq -2$ following the family theory of \cite{BLMNPS}).
\item \emph{Permanence:} sectoriality past entry is
Lemma~\ref{lem:sectoriality}; the chamber-extent statement requires,
in place of the vanishing backward $\Ext^1$ of the models, the
analysis of the nonvanishing groups
$\Ext^\bullet(A, \mathcal{O}(i))$, whose extensions however place the
\emph{exceptional} (higher-phase) object as subobject and are
therefore destabilized for the same reason as in
Lemma~\ref{lem:P1-extent}.
\item \emph{Phase control:} The paths of \cite[Thms.~4.6, 4.7]{KRZ} are glued at every time, so phase control of all ambient semistable objects holds along them by construction. 
\end{enumerate}

What remains splits into three parts:

\begin{enumerate}[label=\alph*.]
\item \emph{The wall layer} (this paper's program): both endpoints
of the conjecture exist in \cite{KRZ} --- a geometric glued
condition \cite[Thm.~2.72]{KRZ} and a quasi-convergent sectorial
tail \cite[Thm.~4.7]{KRZ}, connected by a single glued path whose
exceptional heart tilts through discrete shifts
\cite[Rem.~2.73]{KRZ}; those tilt times are precisely the walls of
clause (2), crossed but not analyzed in \cite{KRZ}, and identifying
the binding-object deaths and their Jordan--H\"older factors at each
is the outstanding content of the conjecture. (Quasi-convergence of
the Theorem~4.7 path is asserted with verification omitted in
\cite{KRZ}; the wall program does not rely on it.)
\item \emph{The
ordering layer:} the natural quantum charge produces the
interior-$\AX$ mutation $\mathbb{D}_\theta$
(Lemma~\ref{lem:centroid}), while the geometric endpoint is
currently reached only through a twisted fundamental solution;
reconciling the two is \cite[Conj.~2(C)]{KRZ} for cubics, and by the
centroid obstruction it cannot be achieved by a choice of ray.
\item \emph{The deformation layer:} the missing analytic technology is an
isomonodromic deformation of the cubic's quantum connection over a
base with braid-group fundamental group \cite[Question~4.8]{KRZ};
this lies beyond both Dubrovin's semisimple theory and the
coalescence theory of \cite{CDG}, which presupposes a semisimple
ring. Since the cubic's zero-eigenvalue block is not semisimple,
the natural framework is the $F$-bundle formalism of
\cite{KKPY,HYZZ}.
\end{enumerate}
\end{remark}

\begin{remark}[Exclusions]\label{rem:exclusions}
Two loci are excluded and merit comment. Cubics containing a plane
change the exceptional structure through the associated quadric
fibration and are excluded already in \cite{KRZ}. Separately, the
resonant model's rank-drop phenomenon (Lemma~\ref{lem:swap}(1):
$Z(\delta) \equiv 0$ at a symmetric point) suggests that cubics with
extra symmetry may admit sublattices of $\Mukai$ on which the quantum
central charge locks or degenerates; such loci would require separate
treatment in the support-property analysis of
Conjecture~\ref{conj:chamber}(1), and we flag them rather than
attempt a classification here.
\end{remark}

\section{The coupled case: $A_2$, the deformed cubic oscillator, and
Painlev\'e I}\label{sec:A2}

The Fano models of \S\S\ref{sec:P1}--\ref{sec:epsilon} calibrate the
chamber layer of the program; we now develop the complementary test
bed, which calibrates the analytic layer. It is the minimal
\emph{coupled} two-class structure: the $A_2$ quiver category, whose
Riemann--Hilbert problem is solved by the monodromy of the deformed
cubic oscillator \cite{BrMa} and whose tau function is Painlev\'e I.
Everything in this section is explicitly computable, allowing us to
test the analytic dictionary.

\subsection{The $A_2$ BPS structure and its chambers}
\label{subsec:A2bps}

Let $\mathcal{D}$ be the CY3 category of the Ginzburg dg-algebra of
the $A_2$ quiver $\bullet \to \bullet$ --- the finite-dimensional
derived category of the $3$-Calabi--Yau completion of the path
algebra \cite{Kel} --- with simple objects
$S_1, S_2$ satisfying $\Ext^1(S_1, S_2) = \CC$,
$\Ext^1(S_2, S_1) = 0$, charge lattice
$\Gamma = \ZZ\gamma_1 \oplus \ZZ\gamma_2$, $\gamma_i = [S_i]$, and
skew pairing $\langle \gamma_1, \gamma_2\rangle = 1$.

Recall from \cite[\S 2]{BrRH} that a \emph{BPS structure}
$(\Gamma, Z, \Omega)$ consists of a finite-rank free abelian group
$\Gamma$ with a skew-symmetric integral form
$\langle -, - \rangle$, a group homomorphism
$Z \colon \Gamma \to \CC$ (the \emph{central charge}), and a map
$\Omega \colon \Gamma \to \QQ$ (the \emph{BPS invariants})
satisfying the symmetry $\Omega(-\gamma) = \Omega(\gamma)$ and the
support property: for some norm $\|\cdot\|$ on
$\Gamma \otimes \RR$ there is $C > 0$ with
$|Z(\gamma)| > C\, \|\gamma\|$ whenever $\Omega(\gamma) \neq 0$.
The data of this subsection assembles into the family of $A_2$ BPS
structures over the unfolding space: the lattice and pairing above,
the period central charge \eqref{eq:periods}, and the
chamber-dependent invariants recorded below.

The quotient of
$\Stab(\mathcal{D})$ by spherical twists is identified with the
unfolding space of the $A_2$ singularity \cite{Sut,BS}: quadratic
differentials
\begin{equation}\label{eq:Q0}
  Q_0(x)\, dx^2 = (x^3 + a x + b)\, dx^2, \qquad (a,b) \in \CC^2,
\end{equation}
with central charges the periods of the spectral elliptic curve
$E_{a,b}\colon y^2 = Q_0(x)$,
\begin{equation}\label{eq:periods}
  Z(\gamma_i) = \oint_{\gamma_i} \sqrt{Q_0(x)}\, dx,
\end{equation}
where $\gamma_1, \gamma_2$ are the vanishing cycles at the two
branches of the discriminant locus $\{\Delta = 0\}$, where we
write $\Delta = 4a^3 + 27b^2$ in the normalisation of \cite{BrMa}.

The
$\CC^\times$-weights are fixed by $\mathrm{wt}(x) = 1$: then
$\mathrm{wt}(a) = 2$, $\mathrm{wt}(b) = 3$,
$\mathrm{wt}(Z) = 5/2$, and --- from the oscillator
\eqref{eq:oscillator} below --- $\mathrm{wt}(\hb) = 5/2 =
\mathrm{wt}(Z)$, so that $Z/\hb$ is scale-invariant.

The extension $0 \to S_2 \to E \to S_1 \to 0$, of class
$\gamma_1 + \gamma_2$, is stable precisely when
$\varphi(S_2) < \varphi(S_1)$. There are two chambers: a
\emph{two-state} chamber with stables $\{S_1, S_2\}$ and BPS
invariants $\Omega(\pm\gamma_1) = \Omega(\pm\gamma_2) = 1$, and a
\emph{three-state} chamber with additionally
$\Omega(\pm(\gamma_1{+}\gamma_2)) = 1$, separated by the wall of
marginal stability $\{Z(\gamma_2)/Z(\gamma_1) \in \RR_{>0}\}$, across
which the Kontsevich--Soibelman \emph{pentagon} identity holds
\cite{KS}:
\begin{equation}\label{eq:pentagon}
  \mathbb{U}_{\gamma_1}\,\mathbb{U}_{\gamma_2}
  \;=\;
  \mathbb{U}_{\gamma_2}\,\mathbb{U}_{\gamma_1+\gamma_2}\,
  \mathbb{U}_{\gamma_1}.
\end{equation}
The binding object $E$ is the exact CY3 analogue of the torsion
sheaves of \S\ref{sec:P1} --- a stable state existing on one side of
a phase-alignment wall --- with one decisive structural difference
recorded in \S\ref{subsec:coupled-uncoupled}.

We record one explicit landmark, both as a benchmark for the numerics
of \S\ref{subsec:det} and to caution against a tempting but false
analogy. On the $\ZZ_2$-symmetric slice $b = 0$, $a < 0$ real, the
turning points are $\{-s, 0, s\}$ with $s = \sqrt{-a}$, and a direct
computation of \eqref{eq:periods} gives: $Z(\gamma_1)$ (the cycle
over $[-s,0]$) is real, $Z(\gamma_2)$ (over $[0,s]$) is purely
imaginary, with common modulus
\begin{equation}\label{eq:beta-period}
  |Z(\gamma_1)| = |Z(\gamma_2)|
  = 2\int_0^{s} \sqrt{x\,(s^2 - x^2)}\; dx
  = s^{5/2}\, B\!\left(\tfrac34, \tfrac32\right)
  = s^{5/2}\,
  \frac{\Gamma(\tfrac34)\,\Gamma(\tfrac32)}{\Gamma(\tfrac{9}{4})}
  \approx 0.958512\; s^{5/2}.
\end{equation}
Thus the symmetric slice is a \emph{mass} resonance with
\emph{orthogonal} phases, lying in the interior of a chamber ---
equidistant from the wall, not on it. In particular the deformation
parameter $b$ is \emph{not} the analogue of the resonance-breaking
$\varepsilon$ of \S\ref{sec:epsilon}; the correct relation between
the two models is the coupled/uncoupled dichotomy of
\S\ref{subsec:coupled-uncoupled} and
Remark~\ref{rem:coupled-preview}.

\subsection{Exact WKB and the pentagon}\label{subsec:wkb}

The Schr\"odinger operator
\begin{equation}\label{eq:oscillator-bare}
  \hb^2\, y''(x) = Q_0(x)\, y(x)
\end{equation}
has five Stokes sectors at infinity (solutions $\sim
\exp(\pm\tfrac{2}{5}x^{5/2}/\hb)$), one subdominant solution per
sector, and a two-dimensional manifold of Stokes data --- matching
$\operatorname{rk}\Gamma = 2$. The Voros symbols $V_\gamma(\hb)$ --- exponentials of the
all-orders WKB periods along the cycles $\gamma$ --- carry
asymptotics with leading behavior $e^{-Z(\gamma)/\hb}$; their Stokes
automorphisms across saddle-connection phases are governed by the
Delabaere--Dillinger--Pham formula \cite{DDP}.

The DDP
identity for the cubic potential is the pentagon \eqref{eq:pentagon}: the analytic wall-crossing of the oscillator
computes the categorical wall-crossing of $\mathcal{D}$. This is the
oldest fully explicit instance of the identification of Stokes data
with Donaldson--Thomas data and the reason $A_2$ is the right model
on the analytic side.

For \S\ref{subsec:det} the classical fact \cite{Sib}
that the Stokes multipliers of such operators are themselves values
of spectral determinants of rotated problems.

\subsection{The Bridgeland--Masoero deformation}\label{subsec:BM}

The Riemann--Hilbert problem of \cite{BrRH} attached to the $A_2$ BPS
structure asks for functions $x_\gamma(\hb)$ on half-planes, jumping
across each BPS ray $\ell_\gamma = \RR_{>0}Z(\gamma)$ by the
birational transformation
\begin{equation}\label{eq:jump}
  x_\beta \;\longmapsto\; x_\beta\,
  (1 - x_\gamma)^{\Omega(\gamma)\langle \gamma, \beta\rangle},
\end{equation}
with prescribed asymptotics
$x_\gamma(\hb)\, e^{Z(\gamma)/\hb} \to \xi_\gamma$ as $\hb \to 0$.

When the pairing vanishes on the support of $\Omega$ (the
\emph{uncoupled} case, e.g.\ the resolved conifold), the problem
factorizes and is solved by Barnes-type special functions
\cite{BrRH,BrCon}. The $A_2$ structure, with
$\langle\gamma_1,\gamma_2\rangle = 1$, is instead the minimal \emph{coupled}
case, and requires a new transcendental: Painlev\'e I
\cite{BrMa}.

The mechanism of \cite{BrMa} is the deformed cubic oscillator
\begin{equation}\label{eq:oscillator}
  y''(x) \;=\; Q(x,\hb)\, y(x), \qquad
  Q(x,\hb) \;=\; \hb^{-2} Q_0(x) + \hb^{-1} Q_1(x) + Q_2(x),
\end{equation}
with potentials \cite[Eq.~(2)]{BrMa}
\begin{equation}\label{eq:BMpotentials}
  Q_0(x) = x^3 + a x + b, \qquad
  Q_1(x) = \frac{p}{x-q} + r, \qquad
  Q_2(x) = \frac{3}{4(x-q)^2} + \frac{r}{2p(x-q)} + \frac{r^2}{4p^2},
\end{equation}
where the coefficient $\tfrac34$ fixes the indicial exponents at
$x = q$ to $\{-\tfrac12, \tfrac32\}$ and the singularity is required
to be \emph{apparent} (trivial local monodromy); $r$ is an auxiliary
parameter. The apparency condition admits a short exact derivation
which we record, since it exhibits the conceptual point:

\begin{lemma}\label{lem:apparent}
For the potentials \eqref{eq:BMpotentials}, the point $x = q$ is an
apparent singularity of \eqref{eq:oscillator} if and only if
\[
  p^2 \;=\; Q_0(q) \;=\; q^3 + a q + b,
\]
identically in $\hb$ and in $r$: the deformation datum $(q,p)$ is a
point of the spectral elliptic curve $E_{a,b}$ whose periods are the
central charges \eqref{eq:periods}.
\end{lemma}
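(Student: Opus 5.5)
The plan is a local Frobenius analysis at $x=q$. Since the indicial exponents $\{-\tfrac12,\tfrac32\}$ differ by the integer $2$, the singularity is apparent exactly when the Frobenius series for the smaller exponent $-\tfrac12$ exists without a logarithmic term. So I will set $z=x-q$ and Laurent-expand the full potential,
\[
  Q(x,\hb) \;=\; \frac{3}{4z^2} + \frac{A}{z} + B + O(z),
\]
reading off $A$ and $B$ from \eqref{eq:BMpotentials}. The term $\hb^{-2}Q_0$ is regular at $q$, so it contributes only $\hb^{-2}Q_0(q)$ to $B$ and nothing to $A$. The term $\hb^{-1}Q_1$ contributes $\hb^{-1}p$ to $A$ and $\hb^{-1}r$ to $B$. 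The term $Q_2$ contributes the $\tfrac34 z^{-2}$, then $r/(2p)$ to $A$, and $r^2/(4p^2)$ to $B$. Thus
\[
  A = \frac{p}{\hb} + \frac{r}{2p}, \qquad
  B = \frac{Q_0(q)}{\hb^2} + \frac{r}{\hb} + \frac{r^2}{4p^2}.
\]

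Next I substitute the ansatz $y = z^{-1/2}\sum_{k\ge0} a_k z^k$ with $a_0=1$ into $y''=Qy$. The left-hand coefficient of $z^{k-5/2}$ is $(k-\tfrac12)(k-\tfrac32)a_k$. Subtracting $\tfrac34 a_k$ from the double pole gives the recursion
\[
  k(k-2)\,a_k \;=\; A\,a_{k-1} + B\,a_{k-2} + (\text{terms in } a_{k-3},\dots).
\]
At $k=1$ this forces $a_1=-A$. At the resonant index $k=2$ the left side vanishes, so solvability requires $0 = A a_1 + B = B - A^2$.

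If $B=A^2$, then $a_2$ is free and the recursion determines every $a_k$ with $k\ge3$. This yields a two-parameter family of solutions of the form $z^{-1/2}\cdot(\text{holomorphic})$, so the local monodromy is $-1$ times the identity. That is trivial on the projectivised and $\mathrm{SL}_2$ level, which is the sense of ``apparent'' fixed in \cite{BrMa}. If $B\neq A^2$, the standard Frobenius theory forces a $\log z$ term in the second solution, and the monodromy is not scalar. Hence apparency is equivalent to $B=A^2$.

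The final step is to expand this condition: $A^2 = p^2/\hb^2 + r/\hb + r^2/(4p^2)$. The $\hb^{-1}$ and $\hb^0$ terms cancel identically against those of $B$; this cancellation is exactly why $Q_2$ carries the specific $r$-dependent terms. What remains is $\hb^{-2}(Q_0(q)-p^2)=0$, i.e.\ $p^2=Q_0(q)$, with no dependence on $\hb$ or $r$.

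I expect the main obstacle to be bookkeeping rather than conceptual: correctly isolating the $z^{-1}$ and $z^0$ coefficients of $Q_2$. One must also state precisely that ``trivial local monodromy'' is meant up to the sign $-1$ coming from the half-integer exponents. I would address the latter by passing to $\tilde y = z^{1/2}y$, or by citing the convention of \cite{BrMa}.
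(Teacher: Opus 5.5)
Your proposal is correct and follows essentially the same route as the paper: the Frobenius ansatz $y=(x-q)^{-1/2}\sum_k c_k (x-q)^k$ with the recursion $k(k-2)c_k = A\,c_{k-1} + B\,c_{k-2}+\cdots$, followed by $c_1=-A$ and the resonance condition $B=A^2$ at $k=2$, which collapses to $Q_0(q)=p^2$ once the $r$- and $\hb$-dependent terms cancel (the paper only differs cosmetically by first multiplying the equation through by $\hb^2$). Your remark that ``trivial local monodromy'' should be read as monodromy $-I$, because of the half-integer exponents, and your explicit logarithmic-term argument for the converse are worthwhile precisions that the paper leaves implicit.
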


\begin{proof}
Multiply \eqref{eq:oscillator} by $\hb^2$, set $t = x - q$ and
$y = t^{-1/2} u(t)$ with $u = \sum_{k \geq 0} c_k t^k$, $c_0 = 1$.
The double pole cancels, and with pole coefficient
$\hb p + \hb^2 \tfrac{r}{2p}$ at $t^{-1}$ and regular part
$Q_0(q) + \hb r + \hb^2 \tfrac{r^2}{4p^2} + O(t)$, the recursion
reads $\hb^2 k(k-2) c_k = \bigl(\hb p + \hb^2\tfrac{r}{2p}\bigr)
c_{k-1} + (\text{regular})\cdot(\text{lower } c)$. At $k=1$:
$c_1 = -\bigl(\tfrac{p}{\hb} + \tfrac{r}{2p}\bigr)$. At $k = 2$ ---
the resonance of the exponents, where the logarithm would appear ---
the left side vanishes identically and the right side must too:
\begin{align*}
  0 &= \Bigl(\hb p + \hb^2\tfrac{r}{2p}\Bigr)
  \Bigl(-\tfrac{p}{\hb} - \tfrac{r}{2p}\Bigr)
  + Q_0(q) + \hb r + \hb^2\tfrac{r^2}{4p^2} \\
  &= -\Bigl(p^2 + \hb r + \hb^2\tfrac{r^2}{4p^2}\Bigr)
  + Q_0(q) + \hb r + \hb^2\tfrac{r^2}{4p^2}
  \;=\; Q_0(q) - p^2 :
\end{align*}
all $r$- and $\hb$-dependent terms cancel exactly.
\end{proof}

The theorem of \cite{BrMa} we build on is: the framed Stokes data
$x_1(\hb), x_2(\hb)$ of \eqref{eq:oscillator} solve the $A_2$
Riemann--Hilbert problem for the BPS structure with central charge
$Z_{a,b}$; variation of $(a,b)$ at fixed monodromy is an
isomonodromic flow equivalent, after the weight-$(2,3)$ scaling of
\S\ref{subsec:A2bps}, to \emph{Painlev\'e I}, with $q$ as the
Painlev\'e transcendent along the flow: explicitly, in the
normalization of \cite{BrMa} with the auxiliary parameter $r = 0$,
\begin{equation}\label{eq:PIflow}
  \dot a = 1, \qquad \dot b = -q, \qquad
  \dot q = -\frac{2p}{\hb}, \qquad
  \dot p = -\frac{3q^2 + a}{\hb},
\end{equation}
which preserves the apparency locus $p^2 = Q_0(q)$ of
Lemma~\ref{lem:apparent} and reduces to
$\hb^2\, \ddot q = 6 q^2 + 2a$, Painlev\'e~I with $a$ as the
time; and the tau function of the
family equals the Painlev\'e I tau function up to explicit elementary
factors.

Zeros of $\tau$ then correspond to poles of the transcendent ---
the apparent singularity escaping to infinity, the operator
degenerating to \eqref{eq:oscillator-bare} --- by the
pole/oscillator correspondence of Masoero \cite{Mas}.

\subsection{The dictionary}\label{subsec:dictionary}

We can summarize the categorical and analytic identifications in the two pictures as follows:

\begin{center}
\footnotesize
\begin{tabular}{|l|l|}
\hline
categorical side & $A_2$ / oscillator side \\
\hline
class $\gamma \in \Gamma$ & cycle on $E_{a,b}\colon y^2 = x^3+ax+b$ \\
central charge $Z(\gamma)$ & WKB period
  $\oint_\gamma \sqrt{Q_0}\,dx$ \\
stable object & saddle trajectory of $Q_0\,dx^2$ \\
binding object (\S\ref{sec:P1}: $\mathcal{O}_x$;
  \S\ref{sec:product}: $\mathcal{O}_D$) &
  the extension $E$, class $\gamma_1+\gamma_2$ \\
chamber wall / entry wall & saddle connection,
  $Z_2/Z_1 \in \RR_{>0}$ \\
KS jump; pentagon \eqref{eq:pentagon} & DDP Stokes automorphism \\
DT invariants $\Omega = 1$ & simple jumps of framed Stokes data \\
stability direction & $(a,b)$: isomonodromic; flow $=$ PI \\
spectral direction & $\hb$: jumps across BPS rays \eqref{eq:jump} \\
RH solution $x_\gamma(\hb)$ & Stokes multipliers \\
$\tau$ & $\tau_{\mathrm{PI}}$, up to elementary factors \\
zeros of $\tau$ & poles of PI; apparent singularity at $\infty$
  \cite{Mas} \\
``partition function of a D-brane sector'' \cite[\S 8]{R} &
  long-distance expansions of $\tau_{\mathrm{PI}}$ via \\
 & \quad $H_0 = (A_1,A_2)$ AD partition functions \\
\hline
\end{tabular}
\end{center}

\smallskip

The last row deserves emphasis as it closes the loop with the physical
reading proposed in \cite[\S 8]{R}. The analytic index realized as
the partition function of a D-brane sector is an example
where the identification of the tau function with gauge-theory
partition functions is available at the level of established
expansions in the Painlev\'e--gauge correspondence \cite{BLMST}.

\subsection{Decoupled directions}\label{subsec:decoupled}

The $A_2$ setting cleanly separates two directions that the Fano
setting conflates. Here the stability direction $(a,b)$ and the
spectral direction $\hb$ are independent: the Riemann--Hilbert
problem lives at fixed $(a,b)$ with variable $\hb$, and
$(a,b)$-variation is isomonodromic --- which is precisely \emph{why}
a tau function exists as a function on the stability space. In the
Fano models the single parameter $t$ does double duty: through the
$F$-bundle, the base of the quantum connection is simultaneously the
spectral plane and, via the lifted path, a direction in $\Stab$. The
correct comparison is therefore two-layered, matching
Remark~\ref{rem:layers}: our entry walls in $t$ correspond to the
$A_2$ chamber walls in $(a,b)$ (chamber layer), while the Stokes rays
of the quantum connection at fixed $t$ correspond to the $\hb$-rays
at fixed $(a,b)$ (spectral layer). Any analytic formulation for the
Fano case must take care to properly address these two roles.

\subsection{The determinant program at $A_2$}\label{subsec:det}

The proposal of \cite[\S 8]{R}, in the sharpened form suggested by
the present framework --- recover $\log\tau$ from a zeta-regularized
determinant of the operator family, with the JLO Chern character
\cite{JLO,GS} acting fiberwise through the superconnection formalism
--- decomposes at $A_2$ into three steps, of which the first is
classical:

\begin{enumerate}
\item \emph{Stokes data are determinants (classical).} By \cite{Sib},
each Stokes multiplier of \eqref{eq:oscillator-bare} and
\eqref{eq:oscillator} is a value of a spectral determinant of a
rotated problem. The Riemann--Hilbert solution of \cite{BrMa} is
therefore already assembled from determinants; what is missing is
only the assembly at the level of $\tau$.
\item \emph{Assembly (the open content).} Express the tau function of
the family --- defined through the $(a,b)$-variation of the
$x_\gamma$ --- as a zeta-regularized determinant of the family
\eqref{eq:oscillator} over $(a,b)$, with the Quillen metric supplying
the determinant line and its connection, whose curvature should
reproduce the symplectic structure on the $(a,b)$-space underlying
the Joyce structure \cite{BrTau}. Two sharp consistency checks are
forced: (i) the small-$\hb$ asymptotics of $\log\det$ must reproduce
the periods \eqref{eq:periods} (WKB of the determinant $=$ Agmon
actions) --- \emph{verified at leading order in Appendix~\ref{app:det},
by two independent routes, and continued to all perturbative
orders in \S\ref{sec:detline}}; (ii) the zero
locus of the assembled determinant must be the pole locus of
\cite{Mas}.
\item \emph{Comparison.} Independent representations of
$\tau_{\mathrm{PI}}$ from the Painlev\'e--gauge correspondence
\cite{BLMST} supply a second target, and a numerical verification at
a single generic $(a,b)$ is entirely feasible: Stokes data of
\eqref{eq:oscillator} by standard collocation, the pentagon
\eqref{eq:pentagon} as the internal check, and the explicit period
\eqref{eq:beta-period} at $b = 0$ as the benchmark for the
asymptotic check (i).
\end{enumerate}

If step (2) succeeds, the JLO formulation of \cite{R} acquires its
first theorem: the cyclic-cohomological Chern character data of an
operator family assembles to a Donaldson--Thomas tau function in the
minimal coupled case.

\subsection{Coupled versus uncoupled: the two minimal models}
\label{subsec:coupled-uncoupled}

We can now state precisely how the two sides considered in this work
relate, and why both are needed.

First, \emph{no glued region exists on the $A_2$ side, and its wall
is two-sided.} By Lemma~\ref{lem:CY-indec} (whose Serre-duality
argument requires only $S \cong [n]$, and whose $\HH^0$-hypothesis
can here be bypassed via $\Ext^1(S_1,S_2) \neq 0$), the category
$\mathcal{D}$ admits no nontrivial semiorthogonal decomposition:
hence no glued region, no selection region, no terminality. The
pentagon wall is crossed freely in both directions, and the
invariance statement is the wall-crossing formula
\eqref{eq:pentagon}, not a chamber theorem. The $A_2$ case therefore
calibrates the \emph{analytic} dictionary only; the \emph{chamber}
dictionary is calibrated by the Fano models of
\S\S\ref{sec:P1}--\ref{sec:epsilon}, where semiorthogonal
decompositions exist and the flow terminates in a sectorial regime.

Second, the correct classification of the two-class local models is
by the pairing, as anticipated in
Remark~\ref{rem:coupled-preview}. The middle block of the resonant
product is the minimal \emph{uncoupled} degeneracy:
$\langle\cdot,\cdot\rangle = 0$ between the middle classes, no wall,
no binding morphism, region picture phenomena only, exact
phase-locking --- and, on the Riemann--Hilbert side, Barnes-level
solvability. The $A_2$ structure is the minimal \emph{coupled} pair:
$\langle\gamma_1,\gamma_2\rangle = 1$, a genuine two-sided wall, the
pentagon, and Painlev\'e-level transcendence. Together the two
exhaust the local models of a two-class interaction, and they sit on
opposite sides of the same dichotomy that separates the solvable from
the transcendental Riemann--Hilbert problems. (The computation
\eqref{eq:beta-period} shows the $\ZZ_2$-symmetric slice of the
$A_2$ family is a mass resonance with orthogonal phases in the
interior of a chamber --- so the deformation $b$ is \emph{not} an
analogue of the $\varepsilon$ of \S\ref{sec:epsilon}; the analogy
runs through the pairing, not through the unfolding parameters.)

Lastly, \emph{massless loci sit at finite distance.} The
conifold-type points $\Delta = 0$, where some $Z(\gamma) \to 0$ and a
stable object goes massless, are interior degenerations of the
$(a,b)$-space reached at finite parameter values, in contrast to
the Fano paths, along which the block goes massless only in the
$t \to \infty$ limit, on the augmented boundary
(Remark~\ref{rem:massless}). The heat-kernel counterpart, where the
well's localization weight collapses, can be studied
at finite parameters on the $A_2$ side, which is
a practical advantage for the
analytic side of the investigation.

\section{The determinant line at $A_2$: corrections, flatness, and
curvature}\label{sec:detline}

The consistency requirements of the determinant program of
\S\ref{subsec:det}, step (2), begin with the identification of the
small-$\hb$ growth of the spectral determinant with the period data.
This is carried out at leading order in Appendix~\ref{app:det}, by
two independent routes --- zero-counting and the analytic
continuation of the periods --- with the result
\begin{equation}\label{eq:det-dict}
  \log D(a,b;\hb) \;=\; \frac{1}{2\hb}
  \sum_{\gamma \in V(D)} Z(\gamma)(a,b) \;+\; O(1),
\end{equation}
where $V(D)$ is the set of vanishing cycles adjacent to the
determinant's boundary sectors, continued along the parameter ray;
the a priori $O(\log \hb^{-1})$ term is shown there to vanish, and
the symplectic constant of the family geometry,
$dZ(\gamma_1) \wedge dZ(\gamma_2) = \pm\, 2\pi i\, da \wedge db$
\eqref{eq:app-symp}, is computed by residue. This section continues
the calculation into the perturbative regime: the $\hb$-corrections
to \eqref{eq:det-dict}, obtained in closed form
(\S\ref{subsec:det-qp}); their organisation into a deformation that
is flat to all orders, with the exact curvature concentrated on the
zero divisor (\S\ref{subsec:det-flat}); and the reduction of the
elementary-factor ambiguities of the two normalisations to a single
constant (\S\ref{subsec:det-elem}).

\subsection{Quantum periods and the first $\hb$-correction}\label{subsec:det-qp}

With $y = \exp(\hb^{-1}\!\int P\,dx)$ and
$P = \sum_k \hb^k s_k$, the Riccati recursion
$Q_0 = \hb P' + P^2$ gives $s_0 = \sqrt{Q_0}$,
$s_1 = -Q_0'/4Q_0$, and
\[
  s_2 \;=\; \frac{Q_0''}{8\,Q_0^{3/2}}
  \;-\; \frac{5\,Q_0'^2}{32\,Q_0^{5/2}}.
\]
The odd-order terms are exact, so the periods of $P$ define the
quantum periods
$Z(\gamma, \hb) = Z_0(\gamma) + \hb^2 Z_2(\gamma) + O(\hb^4)$ with
$Z_2(\gamma) = \oint_\gamma s_2\, dx$; this is the undeformed
specialisation of the Voros data of \cite[App.]{BrMa}. On $E_{a,b}$
the form $s_2\,dx$ is anti-invariant under the elliptic involution
and of the second kind: at each ramification point (including
$\infty$, local coordinate $x^{-1/2}$) anti-invariance forces even
powers in the local expansion, so all residues vanish.
Griffiths--Dwork reduction --- write the numerator as
$u\,Q_0 + v\,Q_0'$ via the extended gcd and integrate by parts at
each pole order --- terminates in the basis
$A(\gamma) = \oint_\gamma dx/y$,
$B(\gamma) = \oint_\gamma x\,dx/y$, and yields, exactly,
\begin{equation}\label{eq:det-Z2red}
  Z_2(\gamma) \;=\; \frac{a^2}{4\Delta}\, A(\gamma)
  \;-\; \frac{9b}{8\Delta}\, B(\gamma).
\end{equation}
Since $A = 2\,\partial_b Z_0$ and $B = 2\,\partial_a Z_0$, this is a
$\gamma$-\emph{independent} first-order operator, with the closed
form
\begin{equation}\label{eq:det-Z2}
  Z_2 \;=\; \frac{1}{24}\,\bigl\{\log\Delta,\; Z_0\bigr\},
  \qquad
  \{f, g\} \;:=\; \frac{\partial f}{\partial a}
  \frac{\partial g}{\partial b}
  - \frac{\partial f}{\partial b}
  \frac{\partial g}{\partial a}:
\end{equation}
the first quantum correction of the periods is the Hamiltonian
vector field of $\tfrac{1}{24}\log\Delta$ --- for the symplectic
structure normalised by \eqref{eq:app-symp} --- acting on the
classical periods. (We verified \eqref{eq:det-Z2red} by direct
contour integration at generic $(a,b)$.) The dictionary \eqref{eq:det-dict} extends to
\begin{equation}\label{eq:det-dict2}
  \log D(a,b;\hb) \;=\; \frac{1}{2\hb}
  \sum_{\gamma \in V(D)} Z(\gamma, \hb)(a,b) \;+\; O(1)
  \;=\; \frac{1}{2\hb} \sum_{V(D)} Z_0 \;+\; O(1)
  \;+\; \frac{\hb}{2} \sum_{V(D)} Z_2 \;+\; O(\hb^3).
\end{equation}

On the symmetric slice $b = 0$, \eqref{eq:det-Z2red} collapses to
$Z_2(\gamma) = A(\gamma)/16a$. On the eigenvalue axis this gives
$|Z_2(\gamma_2)| = B(\tfrac14, \tfrac12)\,|a|^{-5/4}/16$, which is
the $k = 1$ term of the all-orders refinement of the counting
function,
\begin{equation}\label{eq:det-counting}
  N(\lambda) \;=\; \frac{\beta}{2\pi\hb}\,\lambda^{5/4}
  \;+\; \frac12
  \;+\; \sum_{k \geq 1} c_k\, \hb^{2k-1} \lambda^{5/4 - 5k/2},
\end{equation}
Continued to $a > 0$ along the path of Route~$\beta$ of
Appendix~\ref{app:det},
$Z_2(\gamma_\pm) = e^{\pm i\pi/4}\,
(B(\tfrac14,\tfrac12)/16)\, a^{-5/4}$, and the two routes agree
again at order $\hb$: on the cycle side,
$\tfrac\hb2 (Z_2^+ + Z_2^-) =
\hb \cos\tfrac\pi4 \cdot (B(\tfrac14,\tfrac12)/16)\, a^{-5/4}$; on
the counting side, the correction term of \eqref{eq:det-counting}
has exponent $\rho - \tfrac52 = -\tfrac54$ and Lindel\"of factor
$\pi/\sin(-\tfrac{5\pi}{4}) = \pi\sqrt2$, giving the same
\begin{equation}\label{eq:det-hbcorr}
  \frac{\hb}{2}\sum_{V(D)} Z_2
  \;=\; \frac{B(\tfrac14, \tfrac12)}{16\sqrt2}\cdot
  \frac{\hb}{a^{5/4}}
  \;\approx\; 0.231759\;\frac{\hb}{a^{5/4}},
  \qquad a \to +\infty, \; b = 0.
\end{equation}
The sign flip of $\sin\pi\rho$ under $\rho \mapsto \rho - \tfrac52$
is exactly the rotation $e^{3i\pi/4} \mapsto e^{i\pi/4}$ of the
continued cycle phases: the order-$\hb$ agreement is the same
monodromy statement as the leading one, one weight lower.
(Both displays take the visible cycle set of $D$ to be the conjugate
pair $\{\gamma_+, \gamma_-\}$ with weight $\tfrac12$; the modulus
statements, and the coefficient of \eqref{eq:det-hbcorr}, are
confirmed numerically in \S\ref{subsec:det-numerics}, where the
phase analysis further identifies the equivalent single-cycle form
\eqref{eq:det-singlecycle}.)

\begin{remark}\label{rem:det-joyce}
The proof of \cite[Prop.~4.4]{BrMa} computes the linear term of the
Joyce function at the zero-section:
$(2\pi i)^{-1}\partial J/\partial\theta_a|_0 = 2a^2\!/4\Delta$ and
$(2\pi i)^{-1}\partial J/\partial\theta_b|_0 = 9b/4\Delta$. Writing
\eqref{eq:det-Z2red} through the periods and quasi-periods,
$Z_2(\gamma_i) = \bigl(\tfrac{a^2}{2}\omega_i +
\tfrac{9b}{4}\eta_i\bigr)/\Delta$, one finds the same coefficient
pair, and in the conventions of \cite[(49)--(50)]{BrMa}, with
$\langle\gamma_1, \gamma_2\rangle = 1$,
\[
  Z_2(\gamma_i) \;=\; \sum_j \langle \gamma_i, \gamma_j \rangle\,
  \frac{\partial J}{\partial \theta_j}\Big|_{\theta = 0}.
\]
The first $\hb$-correction of the determinant's growth is the
Hamiltonian generator of the Joyce function along the zero-section:
the determinant sees the Joyce structure one order in $\hb$ before
any curvature appears.
\end{remark}

\subsection{All orders: perturbative flatness and the curvature
current}\label{subsec:det-flat}

\begin{proposition}\label{prop:det-allorders}
For every $k \geq 1$ there are weighted-homogeneous
$\alpha_k, \beta_k \in \mathbb{Q}[a,b][\Delta^{-1}]$, of weights $3 - 5k$
and $2 - 5k$ and independent of $\gamma$, such that
$Z_{2k}(\gamma) = \alpha_k A(\gamma) + \beta_k B(\gamma)$ for every
$\gamma \in H_1(E_{a,b}, \ZZ)$. Equivalently
$Z_{2k} = \mathsf{D}_k Z_0$ with
$\mathsf{D}_k = 2\beta_k \partial_a + 2\alpha_k \partial_b$ a
first-order operator regular away from $\Delta = 0$.
\end{proposition}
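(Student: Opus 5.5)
The plan is to run the Riccati recursion of \S\ref{subsec:det-qp} to all orders and track two structural features of each $s_{2k}$: its pole structure in $Q_0$, and its weight. Write $Q = Q_0$. By induction on $n$ from $s_{n+1} = -\bigl(s_n' + \sum_{i+j=n+1,\,i,j\ge1} s_i s_j\bigr)/(2s_0)$, one shows that $s_{2k}$ has the form $R_{2k}(x)\,Q^{1/2-3k}$. Here $R_{2k}\in\QQ[a,b,x]$ is a polynomial in $Q$ and its derivatives, and it is weighted-homogeneous. Weights are assigned by $\mathrm{wt}(x)=1$, $\mathrm{wt}(Q)=3$, and $s_n$ has weight $3/2-5n/2$, since $\hb$ has weight $5/2$ and $P$ has weight $3/2$. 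The odd $s_n$ are total derivatives: this is the standard fact that $s_{\mathrm{odd}} = -\tfrac12(\log s_{\mathrm{even\,part}})'$, which was already used in the statement $Z = Z_0+\hb^2Z_2+\dots$. The even forms $s_{2k}\,dx$ are anti-invariant under $y\mapsto -y$ because of the half-integer power of $Q$. They are also of the second kind, by the same local parity argument as for $s_2$: at each branch point, including $\infty$, the local expansion in the uniformizer has only even powers, so every residue vanishes.

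Next, reduce to the basis $dx/y$, $x\,dx/y$. The de Rham cohomology $H^1_{dR}(E_{a,b})$ of second-kind forms, taken modulo exact forms over the field $\QQ(a,b)$, is two-dimensional with basis $dx/y$, $x\,dx/y$. So $s_{2k}\,dx = (\alpha_k + \beta_k x)\,dx/y + d(\cdots)$. To keep control of denominators, I would do this constructively with Griffiths--Dwork, as for \eqref{eq:det-Z2red}. The gcd of $Q$ and $Q'$ is resolved by $uQ+vQ' = \Delta$, with explicit $u,v\in\QQ[a,b,x]$; this is the resultant identity. Each reduction step writes a numerator $N$ as $N\Delta^{-1}(uQ+vQ')$, integrates $NvQ'Q^{-m-1/2}$ by parts, and lowers the pole order by one. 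Every step therefore introduces only the denominator $\Delta$ and rational coefficients. What remains at pole order $Q^{-1/2}$ is a polynomial times $dx/y$. The forms $x^j\,dx/y$ with $j\ge2$ are reduced using $d(x^{j-2}y)$, with no new denominators. The result is $\alpha_k,\beta_k\in\QQ[a,b][\Delta^{-1}]$.

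The coefficients do not depend on $\gamma$ because the reduction is an identity of differential forms on $E_{a,b}$, and integration over any closed cycle kills the exact part. Weighted homogeneity follows because every step respects weights. Since $dx/y$ has weight $-1/2$ and $s_{2k}\,dx$ has weight $5/2-5k$, we get $\mathrm{wt}(\alpha_k)=3-5k$ and $\mathrm{wt}(\beta_k)=2-5k$. Finally, the identities $A = 2\partial_b Z_0$ and $B = 2\partial_a Z_0$, quoted in \S\ref{subsec:det-qp} and obtained by differentiating $\oint y\,dx$ under the integral, give $Z_{2k}=\mathsf{D}_kZ_0$ with $\mathsf{D}_k=2\beta_k\partial_a+2\alpha_k\partial_b$. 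This operator is regular off $\Delta=0$.

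I expect the main obstacle to be the inductive claim that $s_{2k}\,dx$ is of the second kind with the stated pole form, in particular at $\infty$ and with all residues zero. I would try the parity argument first. If it falters, the fallback is that the total differential of the recursion is $\hb$-graded, so each $s_{2k}$ is a combination of derivatives of lower terms times powers of $Q$; a residue at a branch point then vanishes because anti-invariance under the deck involution forces odd local integrands in the uniformizer. A secondary check is that termination of Griffiths--Dwork introduces only powers of $\Delta$, and never the leading coefficient or anything else. This holds because $uQ+vQ'=\Delta$ has polynomial $u,v$.
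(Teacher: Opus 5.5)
Your proposal is correct and follows essentially the same route as the paper's sketch. Anti-invariance of $s_{2k}\,dx$ plus the local parity argument gives forms of the second kind, the Griffiths--Dwork reduction driven by the resultant identity $uQ+vQ'=\Delta$ lands in the span of $dx/y$ and $x\,dx/y$ with coefficients in $\QQ[a,b][\Delta^{-1}]$, and $\gamma$-independence holds because the reduction is an identity of forms; your inductive normal form $s_{2k}=R_{2k}\,Q^{1/2-3k}$ and the weight bookkeeping make explicit what the paper leaves implicit, and since the constructive reduction already proves second-kindness, the step you flagged as the main obstacle is not a real one.
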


\begin{proof}[Proof sketch]
Each $s_{2k}\,dx$ is anti-invariant under the elliptic involution,
so the parity argument above kills every residue and the form is of
the second kind; the Griffiths--Dwork reduction then terminates in
the span of $dx/y$ and $x\,dx/y$ modulo exact forms, with
coefficients rational in $(a,b)$ and poles only on $\Delta$ by the
extended-gcd construction; $\gamma$-independence holds because the
reduction is an identity of forms. (Residue-freeness for the
deformed corrections $\alpha_k$ of the full Voros data is
\cite[Cor.~A.12]{BrMa}.)
\end{proof}

\begin{corollary}[perturbative flatness]\label{cor:det-flat}
On any simply connected region with $\Delta \neq 0$ on which $V(D)$
is locally constant, every coefficient of the asymptotic expansion
\[
  d \log D \;\sim\; \sum_{k \geq 0} \frac{\hb^{2k-1}}{2}
  \sum_{\gamma \in V(D)} dZ_{2k}(\gamma) \;+\; d\bigl(O(1)\bigr)
\]
is an exact form. The curvature of the determinant line vanishes to
all orders in $\hb$ away from $\Delta$ and the parameter walls; the
$\hb$-corrections deform the flat Agmon connection of
\eqref{eq:det-dict} only by exact forms with poles on the
discriminant.
\end{corollary}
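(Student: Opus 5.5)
The corollary should follow almost formally from Proposition~\ref{prop:det-allorders}. The plan is to show that each coefficient $\sum_{\gamma\in V(D)} dZ_{2k}(\gamma)$ is the differential of a single-valued holomorphic function on the region.

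\textbf{Reduction to single-valued functions.} Let $U$ be simply connected with $\Delta \neq 0$ on $U$, and suppose $V(D)$ is locally constant on $U$. Then the cycles $\gamma \in V(D)$ can be transported by Gauss--Manin parallel transport in $H_1(E_{a,b},\ZZ)$ without monodromy. Since $U$ avoids the discriminant, each classical period $Z_0(\gamma)$ is a single-valued holomorphic function of $(a,b)$ on $U$. By Proposition~\ref{prop:det-allorders}, $Z_{2k}(\gamma) = \mathsf{D}_k Z_0(\gamma)$, where $\mathsf{D}_k$ is a first-order operator whose coefficients $2\beta_k, 2\alpha_k$ lie in $\QQ[a,b][\Delta^{-1}]$. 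Hence each $Z_{2k}(\gamma)$ is also single-valued and holomorphic on $U$, with at worst poles along $\Delta=0$ when continued toward its boundary. Consequently the $k$-th coefficient $\tfrac12\sum_{V(D)} dZ_{2k}(\gamma)$ equals $d\bigl(\tfrac12\sum_{V(D)} \mathsf{D}_k Z_0(\gamma)\bigr)$, so it is exact. The same argument for $k=0$ recovers the flat Agmon connection of \eqref{eq:det-dict}. For $k=1$ one can check the result against the closed form \eqref{eq:det-Z2}: there the primitive is explicitly $\tfrac1{24}\{\log\Delta, \sum Z_0\}$, whose poles visibly lie on the discriminant.

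\textbf{Curvature and the $O(1)$ term.} The curvature of the determinant line's connection $d\log D$ is $d(d\log D)$ in the perturbative sense. This vanishes coefficientwise because every coefficient is exact. The $O(1)$ term enters only as $d(O(1))$, so by construction it is exact as well. The claim that exactness holds \emph{away from $\Delta$ and the parameter walls} reflects two separate requirements: simple connectivity with $\Delta\neq0$ is needed for single-valuedness of the periods, and local constancy of $V(D)$ is needed for the sum to be a fixed finite combination of cycles. Across a parameter wall, $V(D)$ jumps, which changes the primitive by a period combination; this is outside the statement. Finally, the $\hb$-corrections $k\ge1$ modify the leading form only by $d$ of functions in the $\QQ[a,b][\Delta^{-1}]$-span of $\partial_a Z_0,\partial_b Z_0$. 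These are exact forms with poles on the discriminant, as asserted.

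\textbf{Main obstacle.} The one delicate point is interpreting ``every coefficient of the asymptotic expansion'' of $d\log D$. The expansion \eqref{eq:det-dict2} is an asymptotic expansion in $\hb$ whose coefficients depend on $(a,b)$, and the corollary treats it termwise. Termwise differentiation in $(a,b)$ is justified if the expansion holds locally uniformly on compact subsets of $U$ together with its derivatives. I would obtain this from holomorphy in $(a,b)$ and Cauchy estimates on slightly smaller polydiscs: uniform asymptotics of holomorphic functions imply uniform asymptotics of their derivatives. The corollary's assertion concerns the formal coefficients, and the identity of those with $\tfrac12\sum dZ_{2k}$ is taken from the all-orders extension of \eqref{eq:det-dict2}. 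If that extension has only been established through low order, the statement should be read as conditional on it, with the Proposition supplying exactness at each order.
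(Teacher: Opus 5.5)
Your argument is correct and is essentially the one the paper intends. The corollary is read off from Proposition~\ref{prop:det-allorders}: on a simply connected region avoiding $\Delta=0$ with $V(D)$ fixed, each $Z_{2k}(\gamma)=\mathsf{D}_k Z_0(\gamma)$ is a single-valued holomorphic function, so every coefficient is $d$ of a function, and the case $k=0$ is the leading-order flatness of Appendix~\ref{app:det}.

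One small correction to a side remark: near $\Delta=0$ the periods $A(\gamma),B(\gamma)$ of non-vanishing cycles carry Picard--Lefschetz logarithms. So ``poles on the discriminant'' should refer to the coefficients $\alpha_k,\beta_k$, not to $Z_{2k}(\gamma)$ itself, which can behave like a pole times $\log\Delta$ rather than ``at worst poles''.
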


The exact statement sits behind the expansion. At fixed $\hb$, by
Poincar\'e--Lelong,
\begin{equation}\label{eq:det-PL}
  \frac{i}{\pi}\,\partial\bar\partial \log |D| \;=\; [\,\mathrm{div}\, D\,]
\end{equation}
as currents on $\CC^2_{(a,b)}$: the curvature of the determinant
line, computed from its canonical holomorphic section, is
$2\pi i\,[\mathrm{div}\, D]$ exactly --- a current supported on the
zero divisor with no smooth part, consistent with
Corollary~\ref{cor:det-flat}. Granting check (ii) of
\S\ref{subsec:det} --- the zero locus is the pole locus of
Painlev\'e~I \cite{Mas}, \ie\ the zero divisor of $\tau$ --- the
curvature current \emph{is} $2\pi i$ times the $\tau$-divisor:
check~(ii) and the curvature half of the consistency requirements
are one statement, and proving either proves both. Semiclassically
the divisor is the union of sheets
$\{Z(\gamma_{\mathrm{act}}, \hb) \in 2\pi i \hb (\ZZ + \tfrac12)\}$
in each ray region; the sheets accumulate on the anti-Stokes wall
$\{\Re Z(\gamma_{\mathrm{act}}) = 0\}$ with transverse spacing
$2\pi\hb$ in $\Im Z$, so at the level of densities the curvature
class per unit parameter volume is governed by
$(2\pi\hb)^{-1} dZ_1 \wedge dZ_2 = \pm\, i\, \hb^{-1}\, da \wedge
db$ by \eqref{eq:app-symp}: the determinant line is asymptotically
a prequantum line bundle for the unfolding symplectic form at level
$\hb^{-1}$.

The remaining analytic content of the curvature investigation
is to transfer from the singular to the smooth picture.
The Chern connection of the Quillen metric (holomorphic-section
curvature plus the $\partial\bar\partial$ of the zeta anomaly) must
reproduce this class as a smooth form.
Corollary~\ref{cor:det-flat} shows that no perturbative term
obstructs this. The smooth Quillen curvature is then a purely
non-perturbative object, the semiclassical average of
\eqref{eq:det-PL}.

\subsection{The elementary factors}\label{subsec:det-elem}

Both normalisation ambiguities of the assembly step live in an
explicitly describable finite-dimensional space, and this can be
established before the assembled object is constructed.

By
\cite{Sib}, $D(a,b;\hb)$ is entire on $\CC^2$ of finite order in
each variable; the Lindel\"of computation of
Appendix~\ref{app:det} gives order $\tfrac54$ in $a$, and the
$b$-direction analogue order $\tfrac56$ (Sibuya's classical
orders).

\begin{lemma}\label{lem:det-ambiguity}
Let $F, G$ be entire functions on $\CC^2_{(a,b)}$ with the same zero
divisor, each of order $\leq \tfrac54$ in $a$ at fixed $b$ and of
order $\leq \tfrac56$ in $b$ at fixed $a$, locally uniformly in the
other variable. Then
\[
  F/G \;=\; \exp\bigl(c_0 + c_1 a\bigr),
  \qquad c_0, c_1 \in \CC.
\]
\end{lemma}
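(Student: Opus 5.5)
The plan is to reduce the statement to one-variable Hadamard theory applied on the coordinate slices, and then glue the slices together using holomorphy.

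\emph{Step 1: write the quotient as an exponential.} Since $F$ and $G$ have the same zero divisor, $H := F/G$ extends to a holomorphic function on $\CC^2$ with no zeros. Because $\CC^2$ is simply connected, $H = e^{h}$ for some entire function $h$, determined up to an additive constant in $2\pi i\ZZ$. The task is then to show that $h(a,b) = c_0 + c_1 a$.

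\emph{Step 2: the slices at fixed $b$.} Fix $b = b_0$ such that the line $\{b = b_0\}$ is not contained in the divisor. Such $b_0$ are all but countably many, since the divisor can contain only countably many horizontal lines. Then $F(\cdot,b_0)$ and $G(\cdot,b_0)$ are nonzero entire functions of $a$. Their zero sets with multiplicity coincide, because both are the restriction of the common divisor. Both have order at most $\tfrac54$, so Hadamard factorisation gives genus at most $1$:
\[
  F(\cdot,b_0) = e^{P_F}\,\Pi, \qquad G(\cdot,b_0) = e^{P_G}\,\Pi,
\]
with the same canonical product $\Pi$ of genus $\leq 1$ and with polynomials $P_F, P_G$ of degree $\leq 1$. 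Hence $h(\cdot,b_0)$ is a polynomial of degree at most $1$ in $a$.

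On this dense set of $b_0$ we have $\partial_a^2 h(\cdot,b_0) \equiv 0$. Since $\partial_a^2 h$ is entire, continuity gives $\partial_a^2 h \equiv 0$ on all of $\CC^2$. Therefore
\[
  h(a,b) = c_0(b) + c_1(b)\,a
\]
with $c_0, c_1$ entire in $b$.

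\emph{Step 3: the slices at fixed $a$.} Run the same argument in the $b$ variable. The order bound is $\tfrac56 < 1$, so the genus is $0$ and the exponential factor is a polynomial of degree $0$. Hence $h(a_0,\cdot)$ is constant for generic $a_0$, and by continuity $\partial_b h \equiv 0$ everywhere. This gives $c_0'(b) + c_1'(b)\,a \equiv 0$ identically in $a$, so $c_0$ and $c_1$ are constants, which is the claim.

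\emph{Main obstacle.} The substantive point is the bookkeeping on the slices. One needs that restricting the common divisor to a generic line yields the same one-variable zero set with multiplicities for $F$ and for $G$. One also needs that the hypothesis ``order $\leq \rho$ locally uniformly in the other variable'' is exactly what is required to apply Hadamard's theorem slice by slice. Here uniformity is used only to guarantee that the order bound holds on every slice, including the generic ones chosen above. The non-generic slices, where a whole line lies in the divisor, are handled by the density-and-continuity argument rather than treated directly. Everything else is formal.
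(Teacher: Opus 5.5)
Your proof is correct and follows the same route as the paper: write $F/G=e^{h}$, use the order bound $\tfrac54$ on the $a$-slices to make $h$ affine in $a$, then use the order bound $\tfrac56<1$ on the $b$-slices to remove the $b$-dependence. The differences are minor: you compare Hadamard factorizations sharing a common canonical product where the paper applies the minimum-modulus estimate to the quotient, and your density-and-continuity treatment of slices lying in the divisor fills in a point the paper's proof leaves implicit (it applies the slice estimate at every $b$ and fixes $a$ at two values without saying these must be generic).
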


\begin{proof}
The quotient is entire and zero-free on the simply connected
$\CC^2$, hence of the form $e^g$ with $g$ entire. Fix $b$: by the
Hadamard minimum-modulus estimate the zero-free function
$e^{g(\cdot, b)}$ has order $\leq \tfrac54$ in $a$, so $g(\cdot, b)$
is a polynomial of degree $\leq \lfloor \tfrac54 \rfloor = 1$:
$g = c_0(b) + c_1(b)\,a$ with $c_0, c_1$ entire in $b$. Fixing $a$
at two values, the same argument in the $b$-direction with order
$\tfrac56 < 1$ forces degree $0$: both $c_i$ are constant.
\end{proof}

\emph{The zeta side.} Set $\lambda_n = -a_n > 0$ and
$\zeta(s) = \sum_n \lambda_n^{-s}$, convergent for
$\Re s > \tfrac54$. The exponent lattice
$\{\tfrac54 - \tfrac52 k\}_{k \geq 0} \cup \{0\}$ of the
all-orders counting function \eqref{eq:det-counting} contains no
positive integer. Hence $\zeta(s)$ continues regularly to $s = 1$
and $s = 0$, with $\zeta(0) = \tfrac12$ from the Maslov constant ---
the same half-integrality of weights responsible for
$\kappa_{\log} = 0$. The zeta-normalized determinant is then fixed
by the classical identity \cite{Vor}
\begin{equation}\label{eq:det-zetanorm}
  D_\zeta(a) \;=\; e^{-\zeta'(0) \,-\, a\,\zeta(1)}
  \prod_n \Bigl(1 + \frac{a}{\lambda_n}\Bigr) e^{-a/\lambda_n}
  \;=\; \exp\Bigl(-\partial_s\big|_{s=0}
  \sum_n (\lambda_n + a)^{-s}\Bigr),
\end{equation}
valid precisely because $\zeta$ is regular at $s = 0, 1$. By
Lemma~\ref{lem:det-ambiguity} the discrepancy between Sibuya's
normalisation and \eqref{eq:det-zetanorm} is exhausted by the pair
$(\zeta'(0), \zeta(1))$. Weighted covariance pins the
$\hb$-dependence: on the symmetric slice the eigenvalues scale as
$a_n(\hb) = \hb^{4/5} a_n(1)$, so
$\zeta_\hb(s) = \hb^{-4s/5}\zeta_1(s)$ and
\begin{equation}\label{eq:det-anomaly}
  -\zeta_\hb'(0) \;=\; c_0^{0} + \tfrac{2}{5}\log\hb,
  \qquad
  \zeta_\hb(1) \;=\; c_1^{0}\, \hb^{-4/5},
\end{equation}
with $c_0^0 = -\zeta_1'(0)$, $c_1^0 = \zeta_1(1)$ pure numbers; the
coefficient $\tfrac25 = \tfrac45\,\zeta(0)$ is the scaling anomaly
of the zeta normalisation. (For $b \neq 0$ the same covariance makes
the constants weight-zero functions of $b\,\hb^{-6/5}$, constant
along each weighted ray.) Note the two combinations
$\hb^{-4/5} a$ and $\log \hb$ are exactly the weight-zero monomials
available --- the ambiguity is as small as scaling allows.

\emph{The tau side.} In the Joyce-structure formulation the
$\tau$-function is fixed by choices of symplectic potentials, and
with the choices of \cite[\S 8]{BrTau} it coincides, up to a
multiplicative constant, with the extension of
$\tau_{\mathrm{PI}}$ as a function of monodromy constructed by
Lisovyy--Roussillon \cite{LR}. The residual elementary freedom is
the finite list of potential shifts of \cite[\S 6.3]{BrTau}; on the
sector of elementary $(a,b)$-dependence it is generated by the
cotangent-structure shift
$\tfrac12 i_E(\lambda) = \tfrac25\, ab$, entering the $\hb$-graded
formula with the prefactor $\hb^{-2}$ --- the unique weight-$5$
monomial --- together with overall constants. Thus the two
ambiguity spaces are
\[
  \text{zeta side: } \exp\bigl[c_0^0 + \tfrac25 \log\hb
  + c_1^0\, \hb^{-4/5} a \bigr],
  \qquad
  \text{tau side: } \exp\bigl[c_0' + c'\, \hb^{-2}\, ab\bigr],
\]
which intersect only in the constants.

\emph{The cancellation mechanism.} The five Stokes sectors are
permuted by $\mu_5$, acting on parameters by
$(a, b) \mapsto (\lambda^4 a, \lambda^6 b)$, $\lambda^5 = 1$, with
$\hb$ fixed (the integer weights $(4,6,5)$ of
\cite[Rmk.~3.5]{BrMa}).

\begin{proposition}\label{prop:det-elem}
In any $\mu_5$-equivariant assembly of the rotated zeta-determinants,
the $a$-linear normalisers of \eqref{eq:det-anomaly} cancel
identically, since $\sum_{\lambda \in \mu_5} \lambda^4 = 0$, and the
logarithmic anomalies add to the pure prefactor $\hb^{2}$, absorbed
by the weighted homogeneity used to normalise $\tau$. The
elementary-factor matching against the \cite[\S 8]{BrTau}
normalisation of $\tau_{\mathrm{PI}}$ therefore reduces to: (i) a
single numerical constant, built from $\zeta_1'(0)$; and (ii) the
selection of the symplectic potential
$\Theta_0 \in \{\Theta_0^{\mathrm{L}}, \Theta_0,
\Theta_0^{\mathrm{H}}\}$ of \cite[\S 6.3]{BrTau} --- since the zeta
ambiguity contains no $ab$-term, at most one member of the family is
consistent.
\end{proposition}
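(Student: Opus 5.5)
The plan is to write the $\mu_5$-equivariant assembly explicitly as a product over the five rotated problems and then read off the elementary factors term by term. For $\lambda \in \mu_5$, the rotated determinant is $D_\zeta(\lambda^4 a, \lambda^6 b;\hb)$. Each factor carries the zeta normaliser of \eqref{eq:det-zetanorm}, which \eqref{eq:det-anomaly} expresses as
\[
\exp\bigl[c_0^0 + \tfrac25\log\hb + c_1^0\,\hb^{-4/5}\,\lambda^4 a\bigr].
\]
Here the rotation acts on $a$ through the weight $\lambda^4$ and leaves $\hb$ fixed. For $b\neq 0$ the constants are weight-zero functions of $b\hb^{-6/5}$, and these are permuted among themselves by $b\mapsto\lambda^6 b$.

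\emph{Step 1: the $a$-linear normalisers cancel.} Taking the logarithm of the product, the $a$-linear part is $c_1^0\hb^{-4/5}a\sum_{\lambda\in\mu_5}\lambda^4$. This vanishes because $\lambda\mapsto\lambda^4$ is a bijection of $\mu_5$, and the sum of the fifth roots of unity is zero. For $b\neq0$ I would restrict to a weighted ray so that the coefficient functions are constant along it.

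\emph{Step 2: the logarithmic anomalies.} There are five equal contributions $\tfrac25\log\hb$, which add to $2\log\hb$, that is, to the prefactor $\hb^{2}$. Since $\hb$ has weight $5/2$, this is a pure monomial of the form allowed by the weighted-homogeneous normalisation of $\tau$, and it is absorbed there.

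\emph{Step 3: what survives.} After Steps 1 and 2 the assembled zeta side carries only the constant $5c_0^0 = -5\zeta_1'(0)$, together with the $b\hb^{-6/5}$-dependent constants, which become constants once we fix a ray. The next step is to compare the zeta side with the $\tau$ side. I would apply Lemma~\ref{lem:det-ambiguity} to the assembled product and the \cite[\S 8]{BrTau} $\tau$. This requires the assembled product to be entire, of the Sibuya orders in $a$ and $b$, and to have the same zero divisor as $\tau$; the zero-divisor condition is check (ii), which is taken here as given. The lemma then leaves only the discrepancy $\exp(c_0+c_1a)$. On the $\tau$ side, the only elementary freedom is $\exp[c_0' + c'\hb^{-2}ab]$. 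The zeta ambiguity, which after Step 1 has no $a$-term, contains no $ab$-monomial. So matching forces the coefficient $c'$ to take one definite value. The three potentials $\Theta_0^{\mathrm{L}},\Theta_0,\Theta_0^{\mathrm{H}}$ differ by distinct multiples of $\tfrac25ab$, so at most one of them is consistent. What then remains is the single constant built from $\zeta_1'(0)$, which gives (i) and (ii).

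\emph{Main obstacle.} The delicate point is Step 3: the comparison through Lemma~\ref{lem:det-ambiguity} needs the assembled product to satisfy the same order bounds as $\tau$. The product is built from rotated sectors, and those bounds must hold for it, not only for each factor. I would argue this first by noting that the order of a finite product is at most the maximum order of its factors, and that the $\mu_5$-rotation preserves the orders $\tfrac54$ and $\tfrac56$, because it acts by unimodular rescaling. A second point needs care: that the three potential shifts really differ by nonzero multiples of the single monomial $\tfrac25 ab$. For this I would consult the list in \cite[\S 6.3]{BrTau} and verify that the differences are exactly of that form.
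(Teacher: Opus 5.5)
Your proposal is correct and is essentially the paper's argument. The paper's proof is just ``immediate from Lemma~\ref{lem:det-ambiguity}, \eqref{eq:det-anomaly}, and the fifth-root-of-unity sum,'' and your Steps~1--3 unpack exactly those three inputs: the cancellation $\sum_{\lambda}\lambda^4=0$, the count $5\cdot\tfrac{2}{5}=2$ producing $\hb^{2}$, and the absence of an $ab$-monomial on the zeta side leaving at most one admissible $\Theta_0$. The order-bound and divisor hypotheses you flag are implicit in the paper's appeal to the lemma.

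Your aside for $b\neq0$ is not needed, and it would not work as phrased. At fixed $\hb$ the five rotated coefficients are evaluated at the distinct points $\lambda^6 b\,\hb^{-6/5}$, so restricting to one weighted ray does not equalise them; a $b$-linear part of $\zeta_b(1)$ would even produce a $\mu_5$-invariant $\hb^{-2}ab$ term. Nothing depends on this, because the statement concerns the slice normalisers of \eqref{eq:det-anomaly}, and the $b$-independence of the discrepancy constants comes from Lemma~\ref{lem:det-ambiguity} itself.
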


\begin{proof}
Immediate from Lemma~\ref{lem:det-ambiguity},
\eqref{eq:det-anomaly}, and the fifth-root-of-unity sum.
\end{proof}

Both residues are numerically accessible: (i) as the fitted $O(1)$
constant of the collocation; (ii) as the $ab$-coefficient of the
assembled log-determinant against $d\log\tau_{\mathrm{LR}}$. A
closed form for $\zeta_1(1)$ and $\zeta_1'(0)$ may be within reach
of the ODE/IM representation of the cubic spectral sums, but is not
needed for the matching.

\subsection{Numerical verification on the symmetric slice}
\label{subsec:det-numerics}

The Sibuya solution $y_0(x, a)$ is computed by exponent-stripped
integration along rays in $S_0$, seeded at large radius with the odd
Voros data through $s_6$ and the exact even part
$P_{\mathrm{odd}}^{-1/2}$; all $y_k$ follow from
$y_k(x, a) = y_0(\omega^{-k} x,\, \omega^{-2k} a)$,
$\omega = e^{2\pi i/5}$. The implementation is validated to
$10^{-12}$ against the closed form at $a = 0$
($y_0(0) = \Gamma(\tfrac15)\, 5^{1/5}/\sqrt{5\pi}$,
$y_0'(0) = \Gamma(-\tfrac15)\, 5^{-1/5}/\sqrt{5\pi}$, from
$y_0 \propto x^{1/2} K_{1/5}(\tfrac25 x^{5/2})$) and against the
constancy of Sibuya's Wronskian
$W[y_0, y_1] = 2i e^{-i\pi/5}$ along the real $a$-axis.

The numerics identifies the lateral determinant. The two
conjugate-symmetric pairs have no zeros on the negative real axis
--- under $x = i\xi$ the pair $(S_{-1}, S_1)$ is the zero-energy
locus of the PT-symmetric oscillator $p^2 + i(\xi^3 - a\xi)$, which
does not vanish there --- while $D(a) = W[y_0, y_2]$ carries the
spectrum: its zeros sit just below the negative real axis with
widths $O(e^{-\beta|a_n|^{5/4}/\hb})$, as expected of a resonance
problem. At $\hb = 1$, Newton iteration gives
\begin{center}
\begin{tabular}{r@{\qquad}l@{\qquad}l@{\qquad}l}
$n$ & $\Re\, a_n$ & $-\Im\, a_n$ &
  $\beta|a|^{5/4} + \tfrac{B(1/4,1/2)}{16}|a|^{-5/4} = 2\pi(n+\tfrac12)$ \\[2pt]
0 & $-2.506957$ & $1.78\times 10^{-2}$ & $-2.51660$ \\
1 & $-6.206794$ & $2.29\times 10^{-5}$ & $-6.20731$ \\
2 & $-9.357730$ & $3.75\times 10^{-8}$ & $-9.35782$ \\
3 & $-12.254435$ & $<10^{-10}$ & $-12.25446$ \\
4 & $-14.986451$ & $<10^{-10}$ & $-14.98646$ \\
5 & $-17.598041$ & $<10^{-10}$ & $-17.59805$
\end{tabular}
\end{center}
with residuals decaying as $|a_n|^{-15/4}$, i.e.\ consistent with
the first neglected order. This confirms, on the eigenvalue side:
the period constant $\beta = B(\tfrac34, \tfrac32)$ of
\eqref{eq:beta-period} to five digits; the closed form
\eqref{eq:det-Z2} through its $b = 0$ specialisation, with
coefficient $B(\tfrac14,\tfrac12)/16$ confirmed to four digits; and
the \emph{sign} of the $\hb^2$ term --- the correction \emph{adds}
to the classical action, $|Z(\gamma_2, \hb)| = \beta|a|^{5/4} +
\hb^2 B(\tfrac14,\tfrac12)\,|a|^{-5/4}/16 + O(\hb^4)$, fixing the
orientation convention in the $b = 0$ evaluation of
\eqref{eq:det-Z2red}.

On the growth side, $\log D(a)$ on the grid
$a \in [12, 100]$ (24 points, all with cancellation health
$\approx 0.87$) fitted against
$c_{\mathrm{lead}}\, a^{5/4} + c_0 + c_{\log}\log a + c_1 a^{-5/4}
+ c_2 a^{-15/4}$ gives, with rms residual $4\times 10^{-8}$:
\begin{align*}
  c_{\mathrm{lead}} &= -0.6777706
  &&\bigl[-\beta/\sqrt2 = -0.6777703\bigr], \\
  c_{\log} &= 6\times 10^{-5} &&[0], \\
  c_1 &= +0.2317591
  &&\bigl[B(\tfrac14,\tfrac12)/16\sqrt2 = 0.2317593\bigr], \\
  c_0 &= +0.693147182
  &&\bigl[\log 2 = 0.693147181\bigr], \\
  c_2 &= +0.535757
  &&\bigl[\tfrac{1617}{5120}\, B(\tfrac34,\tfrac12)/\sqrt2
    = 0.535134\bigr],
\end{align*}
where $c_0, c_1, c_2$ quote the refit including the $a^{-25/4}$
truncation term (rms $4.5\times 10^{-10}$; the three-term fit gives
the same values at lower precision).
The leading constant confirms Route~$\beta$'s modulus
$\beta/\sqrt2$ to seven digits (the phase analysis below refines
the visible cycle content to the single full cycle $\gamma_-$); the
vanishing
of $c_{\log}$ confirms $\kappa_{\log} = 0$ nonasymptotically; and
$c_1$ confirms the order-$\hb$ dictionary \eqref{eq:det-hbcorr},
hence the closed form \eqref{eq:det-Z2}, on the growth side as well.
The constant $c_0$ agrees with $\log 2$ to eight digits (the level
of the fit residual): in the documented Sibuya normalisation the
matching constant of Proposition~\ref{prop:det-elem} is
\[
  c_0 \;=\; \log 2 ,
\]
evidently the modulus of the nonvanishing adjacent Wronskians
($|W[y_k, y_{k+1}]| = 2$), so that the elementary-factor matching
between the Wronskian and zeta normalisations carries \emph{no}
further transcendental constant in this convention. The $c_2$ entry closes the loop with the Griffiths--Dwork
reduction of $Z_4$ by the method of \S\ref{subsec:det-qp}: the
reduction gives, in closed form,
$\Delta^3 \alpha_4 = -\tfrac{63}{640}\, ab\,(788 a^3 - 2457 b^2)$
and
$\Delta^3 \beta_4 = -\tfrac{21}{320}\, a^2 (308 a^3 - 5697 b^2)$,
so on the symmetric slice
$Z_4(\gamma) = -\tfrac{1617}{5120}\, a^{-4} B(\gamma)$ and the
predicted coefficient
$\Re Z_4(\gamma_+) = \tfrac{1617}{5120}\, B(\tfrac34, \tfrac12)/
\sqrt2 = 0.535134$ agrees with the fitted $c_2$ to $0.12\%$. The
next fitted coefficient, $c_3 \approx -7.69$, is the corresponding
measured prediction for $Z_6$.

\paragraph{The phase.}
The imaginary part carries finer information. Fitting the
continuously unwrapped $\arg D(a)$ on $a \in [5, 40]$ against
$\{1, a, a^{5/4}, a^{-5/4}\}$ gives linear coefficient
$3 \times 10^{-4}$ (consistent with zero), constant
$0.3119 \approx \pi/10$, and $a^{5/4}$-coefficient
$-0.67787 \approx -\beta/\sqrt2$. Combined with the modulus fit,
the full complex asymptotics is
\begin{equation}\label{eq:det-singlecycle}
  \log D(a) \;=\; Z(\gamma_-, \hb) \;+\; \log 2 \;+\;
  \frac{i\pi}{10} \;+\; O\bigl(e^{-c\, a^{5/4}}\bigr),
  \qquad Z(\gamma_-) = -\beta\, e^{i\pi/4} a^{5/4},
\end{equation}
the full quantum period of the \emph{single} cycle $\gamma_-$: the
phase distinguishes what the modulus cannot, since
$\{\gamma_+, \gamma_-\}$ with weight $\tfrac12$ and
$\{\gamma_-\}$ with weight $1$ have equal real parts. (The
$a^{-5/4}$ phase coefficient $-0.220$ is consistent with
$\Im\, \hb^2 Z_2(\gamma_-) = -B(\tfrac14,\tfrac12)/16\sqrt2
= -0.232$ at the truncation level of the phase fit.)

\subsection{Consequences for the zeta normalisation}
\label{subsec:det-zeta}

At $a = 0$ the two boundary solutions are the same Bessel solution
($y_0 \propto x^{1/2} K_{1/5}(\tfrac25 x^{5/2})$), so the lateral
determinant is exactly computable:
\begin{equation}\label{eq:det-D0}
  D(0) \;=\; (\omega^{-2} - 1)\, y_0(0)\, y_0'(0)
  \;=\; \frac{1 - \omega^{-2}}{\sin(\pi/5)}
  \;=\; 4\cos(\tfrac\pi5)\, e^{i\pi/10},
  \qquad |D(0)| = 1 + \sqrt5
\end{equation}
(verified to $5\times 10^{-12}$; note $\Im D(0) = 1$ exactly, the
identity $4\cos\tfrac\pi5 \sin\tfrac\pi{10} = 1$).

A function-theoretic remark fixes the correct zeta. The measured
indicator $h(0) = -\beta/\sqrt2 < 0$ is impossible for an
order-$\tfrac54$ entire function whose zeros lie along the single
ray $\RR_{<0}$ (the indicator of a one-ray canonical product of
non-integer order is positive on the opposite ray), so the divisor
of $D$ necessarily contains further families along complex rays ---
the rotated resonance families seen from the other escape sectors
in \S\ref{subsec:det-divisor}. We therefore write $\zeta_D(s)$ for
the spectral zeta of the \emph{full} zero divisor of $D$; on the
symmetric slice this divisor is the $b = 0$ slice of the tau
divisor, so $\zeta_D$ is intrinsically a tau-divisor zeta, refining
the eigenvalue-ray zeta of \S\ref{subsec:det-elem}. Each family
obeys the $\{\tfrac54 - \tfrac52 k\}$ counting lattice by rotation
covariance, so $\zeta_D$ continues regularly to $s = 0, 1$ and the
zeta-normalised determinant
$\Delta_\zeta(a) = \exp(-\partial_s \sum_\rho
(\lambda_\rho + a)^{-s}|_{s=0})$, $\lambda_\rho = -\rho$ over all
zeros, carries no $a^0$ or $a^1$ term in its canonical expansion.

\begin{proposition}[The elementary-factor constant]
\label{prop:det-log2}
Assume the WKB solutions of the family are Borel summable in the
closed Stokes regions adjacent to the ray $a > 0$ (the standard
exact-WKB input of Delabaere--Dillinger--Pham type; for the present
family it follows from the $\hb$-homogeneity of the large-$a$ limit
together with asymptotics of the kind established in
\cite[App.~A]{BrMa}). Then $\kappa_1 = 0$ and
$\Re \kappa_0 = \log 2$; that is, $|D(a)| = 2\,|\Delta_\zeta(a)|$.
\end{proposition}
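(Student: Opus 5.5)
The plan is to reduce everything to a comparison of two entire functions of $a$ on the symmetric slice, and then to read off the two elementary constants from their large-$a$ asymptotics along $a>0$.

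\textbf{Step 1: reduce to a linear exponent.} By construction $\Delta_\zeta$ has exactly the full zero divisor of $D$ restricted to $b=0$ (all resonance families, not just the eigenvalue ray). Both functions have order $\tfrac54$ in $a$: for $D$ this is the Lindel\"of computation of Appendix~\ref{app:det}, and for $\Delta_\zeta$ it follows from the counting lattice of each family. The one-variable form of Lemma~\ref{lem:det-ambiguity} (Hadamard, with $\lfloor \tfrac54\rfloor = 1$) then gives
\[
D(a) = \Delta_\zeta(a)\, \exp(\kappa_0 + \kappa_1 a).
\]
So it remains to determine $\kappa_1$ and $\Re\kappa_0$ by matching the asymptotic expansions of $\log D$ and $\log\Delta_\zeta$ as $a\to+\infty$.

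\textbf{Step 2: the zeta side.} I would use Voros' theory of generalized zeta-regularized products \cite{Vor}. Because each family of zeros obeys the $\{\tfrac54-\tfrac52k\}$ counting lattice, and that lattice contains neither $0$ nor $1$, $\log\Delta_\zeta(a)$ has a \emph{canonical} large-$a$ expansion. This expansion consists only of powers $a^{5/4-5k/2}$, with no $\log a$ term (since $\kappa_{\log}=0$), and with vanishing $a^1$ and $a^0$ coefficients. This canonical property, that a zeta-regularized product has zero constant term and zero integer-power terms whenever the corresponding zeta values are regular, is exactly the sense in which \eqref{eq:det-zetanorm} ``carries no $a^0$ or $a^1$ term''. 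I would state it as a lemma and prove it via the Mellin representation of $\sum_\rho(\lambda_\rho+a)^{-s}$.

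\textbf{Step 3: the Wronskian side.} I would use Sibuya's connection relation $y_{k-1} + C_k\,y_k + y_{k+1}=0$ to write
\[
D = W[y_0,y_2] = -C_1\, W[y_0,y_1], \qquad |W[y_0,y_1]| = 2,
\]
the second identity being the constant Wronskian of \S\ref{subsec:det-numerics}. Under the Borel-summability hypothesis, exact WKB in the Stokes region adjacent to $a>0$ gives $C_1 = \exp\bigl(Z(\gamma_-,\hb)\bigr)\,(1+O(e^{-c\,a^{5/4}}))$, where $Z(\gamma_-,\hb)$ is the Borel-resummed quantum period of Proposition~\ref{prop:det-allorders}, with unit prefactor and a pure phase. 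Consequently $\log|D| = \Re Z(\gamma_-,\hb) + \log 2 + O(e^{-c a^{5/4}})$. Since the quantum period contains only powers $a^{5/4-5k/2}$, the $a^1$ coefficient vanishes, so $\kappa_1=0$. The real constant is $\log 2$, so $\Re\kappa_0=\log 2$, which gives $|D| = 2|\Delta_\zeta|$.

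\textbf{Main obstacle.} The hard step is the unit-prefactor claim in Step 3. One must show that the Voros-normalized WKB solutions agree with Sibuya's subdominant normalization up to factors that cancel in $C_1$. That is, the $\hb$-homogeneous large-$a$ limit must produce no stray constant beyond the Wronskian modulus, and any phase, such as the observed $i\pi/10$, must be purely imaginary. I would first fix the normalization by matching at large $|x|$ along the Stokes rays, using the asymptotics of \cite[App.~A]{BrMa}. As a consistency check I would verify the phase against \eqref{eq:det-D0}, and against the numerical value $c_0=\log 2$ of \S\ref{subsec:det-numerics}.
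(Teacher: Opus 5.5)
Your skeleton is the paper's. Hadamard reduces the quotient to $e^{\kappa_0+\kappa_1 a}$, the canonical zeta expansion has no $a^0$ or $a^1$ term, and everything rests on the constant and linear coefficients of $\log D$ along $a>0$. Factoring $D=C_1\,W[y_1,y_2]$ through Sibuya's exact adjacent Wronskian is a legitimate variant of the paper's bookkeeping. Your relation $y_{k-1}+C_ky_k+y_{k+1}=0$ holds only after rescaling the paper's $y_k$ by constant phases, since $W[y_k,y_{k+1}]=\omega^{-k}W[y_0,y_1]$, but this is harmless. The real problem is that this factorisation moves the entire content of $\Re\kappa_0=\log 2$ into your claim that $\log C_1$ has no real constant term. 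That claim is asserted, not proved, and checking it against the fitted $c_0$ is not an argument.

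The missing idea is what the paper's sketch supplies. Write $y_k=e^{F_k}\psi^{(t_k)}$, where $\psi^{(t)}_\pm$ are the Borel-resummed WKB solutions normalised at a turning point and $F_k$ is the regularised action matching Sibuya's normalisation at infinity. Then three facts hold:
\begin{enumerate}[label=(\alph*)]
\item The turning-point pair has Wronskian exactly $-2$, to all orders.
\item Transporting between base points across Stokes curves of simple turning points produces only exponentials of resummed half-periods and finitely many $a$-independent unimodular Voros constants.
\item At $b=0$, $\hb=1$, every action datum (the $F_k$ and the transfers) is $a$-homogeneous with exponents $\tfrac54-\tfrac52k\notin\ZZ$. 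So it contributes nothing to the $a^0$ or $a^1$ coefficient.
\end{enumerate}
Apply this to the numerator and denominator of $C_1=W[y_0,y_2]/W[y_1,y_2]$. The WKB Wronskian factors of modulus $2$ cancel, and (a)--(c) give exactly your unit-prefactor statement.

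This is not the mechanism you propose. The normalisation factors do \emph{not} cancel in $C_1$: in general $e^{F_0-F_1}$ survives, and it is harmless only because it is constant-free by (c). You also neither need nor have justified the identification of the homogeneous part with the single quantum period $Z(\gamma_-,\hb)$; that is the numerical content of \eqref{eq:det-singlecycle}. The proposition requires only two things: the homogeneous part has no integer exponents, and the leftover phase is independent of $a$, not merely unimodular. The second condition is what gives $\Im\kappa_1=0$ and not just $\Re\kappa_1=0$.
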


\begin{proof}[Proof sketch]
Write $y_0 = e^{F_0}\, \psi^{(0)}_-$ and
$y_2 = e^{F_2}\, \psi^{(t_2)}$, where
$\psi^{(t)}_{\pm} = S_{\mathrm{odd}}^{-1/2}
\exp(\pm\!\int_t^x S_{\mathrm{odd}})$ are the Borel-resummed WKB
solutions normalised at a turning point $t$ and the $F$'s are the
regularised actions matching the Sibuya normalisation. Three facts
assemble the claim. (i)~For any common base point,
$W[\psi^{(t)}_-, \psi^{(t)}_+] = -2$ identically --- an algebraic
identity of the full odd series, exact at every order. (ii)~By the
Voros connection formula, crossing a Stokes curve emanating from a
simple turning point multiplies the coefficients in the
$(\psi_+, \psi_-)$ basis by unimodular constants; transporting
$\psi^{(t_2)}$ to the base point $0$ therefore contributes only
exponentials of resummed actions (half-periods) and finitely many
unimodular factors. (iii)~At $b = 0$, $\hb = 1$, every action datum
in this factorisation --- the $F$'s, the base-point transfers, the
prefactor ratios --- is $a$-homogeneous of exponent
$\tfrac54 - \tfrac52 k \notin \ZZ$, so the asymptotic expansion of
$\log D$ contains no $a^0$ and no $a^1$ term beyond the constants
produced by (i) and (ii); the same holds for $\log \Delta_\zeta$ by
the regularity of $\zeta_D$ at $s = 0, 1$. Hence the Hadamard
quotient $e^{\kappa_0 + \kappa_1 a}$ is a constant of modulus
$|-2| \cdot 1 = 2$.
\end{proof}

\begin{corollary}[Golden ratio]\label{cor:det-golden}
With the input of Proposition~\ref{prop:det-log2}, evaluating the
identity at $a = 0$ against the exact value \eqref{eq:det-D0}
gives
\begin{equation}\label{eq:det-golden}
  \Re\, \zeta_D'(0) \;=\; \log 2 - \log(1 + \sqrt5)
  \;=\; -\log\frac{1 + \sqrt5}{2} \;\approx\; -0.4812118251 .
\end{equation}
\end{corollary}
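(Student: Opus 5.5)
The plan is to evaluate the identity $|D(a)| = 2\,|\Delta_\zeta(a)|$ of Proposition~\ref{prop:det-log2} at the single point $a = 0$. There both sides are explicitly computable: the left side from the exact Bessel evaluation \eqref{eq:det-D0}, and the right side from the definition of the zeta-normalised determinant over the full divisor.

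First, I will record that $\Delta_\zeta(a) = \exp\bigl(-\partial_s \sum_\rho (\lambda_\rho + a)^{-s}|_{s=0}\bigr)$ specialises at $a = 0$ to $\Delta_\zeta(0) = \exp(-\zeta_D'(0))$. This needs $\zeta_D$ to be regular at $s = 0$, which was established in \S\ref{subsec:det-zeta} from the $\{\tfrac54 - \tfrac52 k\}$ counting lattice containing no nonnegative integer. One point needs care: the zeros $\rho$ lie off the negative real axis, so the $\lambda_\rho$ are complex. I will fix the branch of $\lambda_\rho^{-s}$ by continuity from the eigenvalue ray, using the principal branch cut along $\RR_{<0}$, which none of the $\lambda_\rho$ meet since $a = 0$ is not a zero because $D(0) \neq 0$. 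With this choice $\zeta_D'(0)$ is a well-defined complex number, and $|\Delta_\zeta(0)| = e^{-\Re\,\zeta_D'(0)}$ regardless of any imaginary ambiguity. This is why only the real part is asserted.

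Second, I will take moduli in Proposition~\ref{prop:det-log2} at $a = 0$. This gives $|D(0)| = 2\,e^{-\Re\,\zeta_D'(0)}$. By \eqref{eq:det-D0}, $|D(0)| = 4\cos(\pi/5) = 1 + \sqrt5$. Solving, $\Re\,\zeta_D'(0) = \log 2 - \log(1+\sqrt5) = -\log\tfrac{1+\sqrt5}{2}$, and the numerical value follows from $\log\varphi \approx 0.4812118251$.

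The main obstacle is legitimacy rather than computation: the identity is proved for all $a$ only as an equality of entire functions via the Hadamard quotient $e^{\kappa_0+\kappa_1 a}$, where $\kappa_1 = 0$ and $\Re\kappa_0 = \log 2$ were determined from large-$a$ asymptotics. I must make sure this quotient is globally constant, so that its value at $a = 0$ equals its asymptotic value. That is exactly the content of Lemma~\ref{lem:det-ambiguity} restricted to the slice, where order $\tfrac54$ forces $\deg\le 1$. I also need $|D(0)|$ to be computed in the same Sibuya normalisation used in the fit; \eqref{eq:det-D0} was derived from $W[y_0,y_2]$ with $y_2(x,0) = y_0(\omega^{-2}x, 0)$, so the normalisations agree. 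As a consistency check I will compare against the numerical $c_0 = \log 2$ of \S\ref{subsec:det-numerics}.
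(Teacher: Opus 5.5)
Your proposal is correct and is the paper's argument: set $a=0$ in $|D(a)| = 2\,|\Delta_\zeta(a)|$, use $\Delta_\zeta(0) = e^{-\zeta_D'(0)}$ and $|D(0)| = 4\cos(\pi/5) = 1+\sqrt5$ from \eqref{eq:det-D0}, and solve for $\Re\,\zeta_D'(0)$. One small slip: $D(0)\neq 0$ only rules out $\lambda_\rho = 0$, and keeping every $\lambda_\rho$ off the cut $\RR_{<0}$ would need $D$ to have no zeros on the positive real $a$-axis. This is harmless here, since the modulus computation only needs $\zeta_D'(0)$ defined with the same branch convention as in Proposition~\ref{prop:det-log2}.
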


Numerically, the growth fit gives $\Re\kappa_1 < 10^{-9}$ and
$\Re\kappa_0 = \log 2$ to nine digits, and the phase fit of
\S\ref{subsec:det-numerics} gives $\Im\kappa_1$ consistent with
zero at $3 \times 10^{-4}$ and $\Im\kappa_0 = \pi/10$ within
$2 \times 10^{-3}$, matching $\arg D(0) = \pi/10$ --- so the
measurements confirm the Proposition on all four constants. The
remaining datum of the normalisation is
$\zeta_D(1) = D'(0)/D(0) = -0.94261 - 0.68485\,i$ (measured; its
imaginary part reflects the complex-ray families of the
divisor). For the
assembly step this is decisive: in the $\mu_5$-assembly
$\prod_{k \in \ZZ_5} D(\omega^{-2k} a)$ of Sibuya-normalised
determinants the linear ambiguities cancel identically
($\sum_k \omega^{-2k} = 0$, the mechanism of
Proposition~\ref{prop:det-elem}), so the assembled determinant is
canonically normalised with constant exactly
$5\kappa_0 = \log 32 + i\pi/2$ modulo width corrections.

\subsection{The divisor comparison}
\label{subsec:det-divisor}

Check (ii) of \S\ref{subsec:det} is verified along a Painlev\'e~I
trajectory. At a pole $t^*$ of a \PI\ solution
$q(t) = (t-t^*)^{-2} - \tfrac{t^*}{10}(t-t^*)^2 - \tfrac16(t-t^*)^3
+ h\,(t-t^*)^4 + \cdots$ the apparent singularity of the SL-form Lax
operator escapes to infinity and the operator degenerates to an
undeformed cubic: the finite part of the Hamiltonian is $-14h$
(verified symbolically), so the limit potential is
$4x^3 + 2t^*x - 28h$, i.e.\
\begin{equation}\label{eq:det-dict-PI}
  a = 2^{-1/5}\, t^*, \qquad b = -28 \cdot 2^{-4/5}\, h
\end{equation}
in our normalisation. Seeding a \PI\ solution at the pole datum
$(t^*, h) = (2^{1/5} a_1,\, 0)$ of the $n = 1$ zero and integrating
along the real axis, the six neighbouring poles and their data
$(t^*_j, h_j)$, mapped through \eqref{eq:det-dict-PI}, give
\begin{center}
\begin{tabular}{r@{\quad}r@{\quad}r@{\quad}r@{\quad}r@{\quad}c@{\quad}l}
$j$ & $t^*_j$ & $h_j$ & $a_j$ & $b_j$ & $\min_k r_k$ & sector \\[2pt]
$-3$ & $-12.26172$ & $+0.40216$ & $-10.67445$ & $-6.46738$ &
  $6.6\times 10^{-5}$ & $k = 0, 3$ \\
$-2$ & $-10.64736$ & $+0.25946$ & $-9.26906$ & $-4.17264$ &
  $4.7\times 10^{-4}$ & $k = 0, 3$ \\
$-1$ & $-8.94939$ & $+0.12456$ & $-7.79090$ & $-2.00322$ &
  $4.9\times 10^{-5}$ & $k = 0, 3$ \\
$+1$ & $-5.09337$ & $-0.10837$ & $-4.43404$ & $+1.74278$ &
  $8.5\times 10^{-5}$ & $k = 0, 3$ \\
$+2$ & $+2.90683$ & $-0.06066$ & $+2.53054$ & $+0.97560$ &
  $9.8\times 10^{-6}$ & $k = 4$ \\
$+3$ & $+6.09494$ & $-0.23687$ & $+5.30595$ & $+3.80924$ &
  $5.1\times 10^{-6}$ & $k = 4$
\end{tabular}
\end{center}
where $r_k$ is the normalisation-free cancellation ratio of the
skip-Wronskian $W[y_k, y_{k+2}]$ at $(a_j, b_j)$ (the non-minimal
ratios are all $O(1)$). Every pole datum lands on the zero divisor
of a skip Stokes multiplier --- a codimension-two coincidence in
$(a, b)$ at six independent points, well off the symmetric slice ---
with the residual floors consistent with the $O(10^{-5})$ width
offset of the real seed. For $t^* < 0$ the vanishing multipliers are
the conjugate pair $k \in \{0, 3\}$ (forced equal at real $(a,b)$);
for $t^* > 0$ the self-conjugate $k = 4$: the escape sector of the
apparent singularity rotates with $\arg t^*$, exactly as the
$\mu_5$-assembly requires. This identifies the \PI\ pole locus ---
the tau divisor --- with the zero divisor of the assembled
determinant, verifying check (ii) of \S\ref{subsec:det} along the
trajectory and confirming the dictionary
\eqref{eq:det-dict-PI} nonperturbatively.

\subsection{Summary and remaining steps}\label{subsec:det-status}

Appendix~\ref{app:det} verifies the leading dictionary
\eqref{eq:det-dict} (two routes, three data points), the
symplectic constant \eqref{eq:app-symp} (residue and Legendre),
and $\kappa_{\log} = 0$. This section adds: the
order-$\hb$ correction \eqref{eq:det-dict2}, in the closed form
\eqref{eq:det-Z2}, with the two routes agreeing again at order
$\hb$ and the coefficient identified with the linear term of the
Joyce function (Remark~\ref{rem:det-joyce}); perturbative flatness
to all orders (Proposition~\ref{prop:det-allorders},
Corollary~\ref{cor:det-flat}); and the reduction of the
elementary-factor matching to a single constant plus a
$\Theta_0$-selection (Proposition~\ref{prop:det-elem}), the
$a$-linear ambiguities cancelling by $\mu_5$-equivariance and the
logarithmic anomaly pinned to $\tfrac25 \log\hb$ per multiplier by
$\zeta(0) = \tfrac12$. The matching constant is now evaluated:
$\Re\,\zeta_D'(0) = -\log\tfrac{1+\sqrt5}{2}$
\eqref{eq:det-golden}, and the $\mu_5$-assembly of
Sibuya-normalised determinants is canonically normalised with
constant $\log 32 + i\pi/2$ (\S\ref{subsec:det-zeta}).
The divisor comparison with the Painlev\'e~I pole locus is
verified along a trajectory (\S\ref{subsec:det-divisor}), with the
dictionary \eqref{eq:det-dict-PI} confirmed nonperturbatively.
The matching constant is established
(Proposition~\ref{prop:det-log2}). Remaining for the assembly step
of \S\ref{subsec:det}: the singular-to-smooth transfer identifying
the Quillen (Chern) curvature with the semiclassical average of the
divisor current \eqref{eq:det-PL}. Numerically: with
$\beta = B(\tfrac34,\tfrac32) \approx 0.958512$ (the corrected value
of \eqref{eq:beta-period}), the verified asymptotics at $\hb = 1$ on
the symmetric slice reads
\[
  \log D(a) \;=\; -\frac{\beta}{\sqrt2}\, a^{5/4} \;+\; \log 2
  \;+\; \frac{B(\tfrac14,\tfrac12)}{16\sqrt2}\, a^{-5/4}
  \;+\; O(a^{-15/4}),
  \qquad a \to +\infty,
\]
every term confirmed by the fit of \S\ref{subsec:det-numerics}, with
the $O(1)$ constant --- the matching constant of
Proposition~\ref{prop:det-elem} in the Sibuya normalisation ---
equal to $\log 2$ to eight digits.

\section{Outlook}\label{sec:outlook}

The paper has mapped two programs onto explicitly solvable models,
and we close by listing their next steps in order.

\emph{The chamber program.} Conjecture~\ref{conj:chamber} records the
finite-time statement for the cubic fourfold, in the interior-$\AX$
mutation forced by Lemma~\ref{lem:centroid}, with its proof
architecture in Remark~\ref{rem:proof-structure}. Both endpoints of the statement exist in the literature in the form of a geometric glued condition and a
quasi-convergent sectorial tail, joined by a single glued path
\cite[Thms.~2.72, 4.7]{KRZ}.
The outstanding content is the
wall layer: interpreting the discrete heart-tilts of the path as the
walls of Conjecture~\ref{conj:chamber}(2), identifying the
binding-object decays and their Jordan--H\"older factors, and
computing the entry time from the offsets of
\cite[(3.14)]{KRZ}.\footnote{For $\PP^1$, this is the difference between
tracking exceptional objects and torsion sheaves.}
A completion of that analysis, together with
Proposition~\ref{prop:terminality} guaranteeing that the resulting
sectorial regime is final, converts the dynamical protection of the
$K3$ atom from a selection rule into a theorem; the deeper
question left open on the quantum side is how to perform a non-semisimple
isomonodromic deformation of \cite[Question~4.8]{KRZ}.

\emph{The analytic program.} The models prescribe an ordered sequence
of calculations. First, the chamber layer needs to be
made spectral: for the two-well operator of the $\PP^1$ mirror
($W = x + q/x$), the entry wall of
Proposition~\ref{prop:P1-entry} should appear as a spectral flow.
The delocalization of the low-lying solitonic modes of $D_W^2$ in the
$tt^*$ geometry \cite{CV} needs to be examined
as the ray parameter passes $t_0$. Second,
the regional picture needs to be made asymptotic: the mass table of
\S\ref{sec:product} prescribes that the heat kernel of the resonant
product weight the two degenerate wells at polynomial order against
the exponential weights of the isolated wells, with the crossover
$t_1 \sim 1/\varepsilon$ of \S\ref{sec:epsilon} appearing as the
scale at which the two nearby wells resolve.
The nested-scale
structure of the augmented boundary can then be read
off from heat-kernel asymptotics
in the theta-summable framework of \cite{GS}. Third, the
tau/determinant identification of \S\ref{subsec:det} should be
carried out, with the $A_2$ case as the theorem to attempt; its
first consistency check is verified at leading order in
Appendix~\ref{app:det} and continued in \S\ref{sec:detline}: the
$\hb$-corrections are obtained in closed form through $Z_4$,
perturbative flatness is established to all orders, the
elementary-factor matching is reduced to a single constant
evaluated as $\log\tfrac{1+\sqrt5}{2}$ and the tau-divisor
identification is verified along a Painlev\'e~I trajectory, and
the matching constant is established as $\log 2$
(Proposition~\ref{prop:det-log2}), whence the divisor spectral zeta
at zero is the golden-ratio logarithm; the remaining step is
the identification of the smooth Quillen curvature with the
semiclassical divisor current.
The proposed calculations are independent and
self-contained. Together, they would constitute the analytic
half of the dictionary whose categorical half is established in
\S\S\ref{sec:P1}--\ref{sec:cubic}.

Beyond the perturbative layer, the numerics of
\S\ref{subsec:det-numerics} makes the nonperturbative structure of
the determinant directly visible: the zeros acquire widths
$\Im a_n = O(e^{-\beta|a_n|^{5/4}/\hb})$ exactly on the active ray
where the real cycle $\gamma_1$ supports a saddle connection. We
record the expected interpretation as a conjecture.

\begin{conjecture}[Nonperturbative jumps of the determinant line]
\label{conj:jumps}
Fix $\hb > 0$ and let $D^{\pm}$ denote the lateral Borel
resummations, on the two sides of the active ray
$\ell_\gamma = \RR_{>0}\, Z(\gamma)$ of a BPS class $\gamma$ of the
$A_2$ structure, of the all-orders determinant--period dictionary
of \S\ref{subsec:det-qp}--\ref{subsec:det-flat}. Then the
discontinuity is the Delabaere--Dillinger--Pham automorphism
attached to $\gamma$ with the Kontsevich--Soibelman weighting of
the $A_2$ BPS spectrum ($\Omega(\gamma) = 1$ on the active
classes):
\[
  \log D^{+} - \log D^{-}
  \;=\; \frac{\Omega(\gamma)}{2}
  \sum_{\gamma' \in V(D)} \langle \gamma, \gamma' \rangle\,
  \log\!\bigl(1 + e^{\,\mathcal V_\gamma}\bigr),
\]
where $\mathcal V_\gamma = Z(\gamma)/\hb + O(\hb)$ is the resummed
Voros symbol of $\gamma$. Equivalently: the nonperturbative
completion of the determinant line is the Riemann--Hilbert problem
of the Joyce structure of \cite{BrMa}, with (i) the linearisation
in $\hb$ of the jump given by the Joyce-function identity of
Remark~\ref{rem:det-joyce}; (ii) the leading instanton on the
eigenvalue ray reproducing the resonance widths
$\Im a_n = O(e^{-\beta |a_n|^{5/4}/\hb})$ measured in
\S\ref{subsec:det-numerics}; and (iii) the composition of the
jumps around a generic point given by the pentagon identity, the
$A_2$ wall-crossing formula.
\end{conjecture}

Proving the conjecture would close the loop between the two halves of the
$A_2$ dictionary: the determinant line, constructed from spectral
data alone, would carry precisely the Stokes automorphisms that
define the Riemann--Hilbert side, making the tau/determinant
identification an equivalence of resurgent structures rather than
an equality of functions.

\paragraph{The Quillen transfer}

By the weight quasi-homogeneity, the $\hb \to 0$ concentration of
the curvature current $2\pi i\, [\operatorname{div} D]$ on
anti-Stokes walls is equivalent, at $\hb = 1$, to \emph{radial}
equidistribution of the zeros of the entire function $D$ on the
symmetric slice.  This is Levin's completely-regular-growth
density law: zeros concentrate on the rays where the indicator
$h(\theta)$ has corners, with counting
$N_{\mathrm{ray}}(R) = (R^{5/4}/2\pi)\,[h'(\theta^+) -
  h'(\theta^-)]$.

The proposed route is then through classical technology:
Levin--Pfluger equidistribution driven by coefficient-uniform upper
bounds of Sibuya type, with Jensen-lower-bound no-mass-loss
supplied by the resonance lattice itself.

The points requiring
care are the wall junctions -- the image of the discriminant
$\Delta = 0$ (parabolic-cylinder local model) and the $\mu_5$ fixed
point -- and the smooth $\partial\bar\partial$-anomaly of the zeta metric, a
one-dimensional computation of Burghelea--Friedlander--Kappeler
type whose first input is the first-kind derivative identity
$\partial_b z_i = \omega_i$.

The multi-ray divisor structure established
in \S\ref{subsec:det-zeta} and the single-cycle weights of
\eqref{eq:det-singlecycle} caution that wall multiplicities must
be assigned per escape sector.

The $b = 0$ shadow of
the full statement (consisting of sector-by-sector zero counts by the argument
principle, the measured indicator and its corner jumps, and
per-wall spacing densities) is directly testable with the
methods of \S\ref{subsec:det-numerics}.  Comparison of the three
independent density measurements against the Levin jump law
constitutes a quantitative test of the transfer on the slice.

We have conducted this test out on the annuli $14 \le |a| \le 22$
and $10 \le |a| \le 30$ (240 angular points, 24 sectors each;
completely-regular-growth collapse consistent with the finite-radius
$\log 2$ term plus $O(R^{-5/2})$).

The measured indicator is
captured, over the \emph{full} circle and to $9 \times 10^{-5}$
(rms $6 \times 10^{-5}$) at the outer radius $R = 30$, by the
three-arc closed form
\[
  h_R(\theta) \;=\; \frac{\log 2}{R^{5/4}} \;+\;
  \begin{cases}
    A(\theta), & 0 \le \theta \le \tfrac{3\pi}{5},\\[2pt]
    A(\theta) + C(\theta), & \tfrac{3\pi}{5} < \theta < \pi,\\[2pt]
    C(\theta), & \pi \le \theta < 2\pi,
  \end{cases}
  \qquad
  \begin{aligned}
    A(\theta) &= \beta \cos\bigl(\tfrac54\theta - \tfrac{3\pi}{4}\bigr),\\
    C(\theta) &= \beta \cos\bigl(\tfrac54\theta - \tfrac{5\pi}{4}\bigr),
  \end{aligned}
\]
with $A = \Re\, Z(\gamma_-)$ (anchored at $\theta = 0$ by
\eqref{eq:det-singlecycle}, $Z(\gamma_-) = \beta e^{5i\pi/4}
a^{5/4}$) and $C = \Re\, Z(\gamma_+)$,
$Z(\gamma_+) = \beta e^{3i\pi/4} a^{5/4}$ --- the conjugate
vanishing cycle on the turning-point pair $\{0, +i\sqrt a\}$, the
monodromy image of $\gamma_-$ around $a = 0$ --- including the
finite-radius constant of Proposition~\ref{prop:det-log2}. The two
charges are everywhere perpendicular,
$Z(\gamma_+)/Z(\gamma_-) = e^{-i\pi/2}$: an $A_2$-specific
coincidence which makes the visibility transition of one cycle
($\Re Z$ crossing zero) fall exactly on the BPS ray of the other.
Exactly two walls appear, at $\theta = \tfrac{3\pi}{5}$ (where
$\Re Z(\gamma_+)$ crosses zero, equivalently
$Z(\gamma_-) \in \RR_{>0}$; the branch $C$ enters) and
$\theta = \pi$ (the mirror statement; $A$ exits), carrying three
zeros each on the inner annulus and eight each on the wider one ---
the latter matching the predicted count
$(\beta/2\pi)(R_2^{\rho} - R_1^{\rho}) = 8.00$ exactly.

On the two-cycle arc the data is the
transseries $\log D = \mathcal V_{\gamma_-} + \log(1 + \sigma\,
e^{\mathcal V_{\gamma_+}})$: the zeros sit on the ray where the
relative symbol oscillates, quantised by
$\Im Z(\gamma_+) \in 2\pi(\ZZ + \mathrm{const})$ --- whence the
measured spacing coefficient $\beta$.

The density law is verified four ways: sector counts give jump
$2\pi\rho N/(R_2^{\rho} - R_1^{\rho}) = 1.146$ and $1.198$ on the
two annuli ($4\%$ and $0.03\%$), one-sided slope fits give $1.209$
at both walls ($0.9\%$, fit-window limited), and the spacing of the
Newton-polished zeros on the resonance ray gives
$\rho\, d_c = 1.1978$ ($0.03\%$), all against the prediction
$\rho\beta = \tfrac54 B(\tfrac34, \tfrac32) = 1.19814$; the corner
locations fit $\tfrac{3\pi}{5}$ and $\pi$ to $2.4 \times 10^{-4}$,
and the outer slopes vanish at both walls, the wall-defining branch
being extremal on its ray. A polished zero on the complex wall,
$a = -6.21603 + 19.13098\,i$ (argument $\tfrac{3\pi}{5}$ to
$5 \times 10^{-5}$), has modulus $20.11550$, matching the
resonance-ray quantisation
$\beta |a|^{5/4} + \hb^2 B(\tfrac14,\tfrac12)|a|^{-5/4}/16
= 2\pi(n + \tfrac12)$ at $n = 6$ ($20.11551$) to six digits: the
rotated family carries the identical Bohr--Sommerfeld constants,
including the $\hb^2$ term, as the $\mu_5$ covariance requires. The ray $\theta = \tfrac{9\pi}{5}$, where $Z(\gamma_+)$ is real
\emph{negative} --- the relative symbol maximally suppressed ---
shows a smooth minimum $h = -\beta$ and \emph{no} zeros: only the
positively-oriented rays are active, the orientation structure of
Conjecture~\ref{conj:jumps} seen directly in the data. The
composite structure --- the visible set changing by exactly the
added branch at its BPS ray --- is the indicator-level shadow of
Conjecture~\ref{conj:jumps}, and the verified density law
$N_{\mathrm{ray}}(R) = (R^{\rho}/2\pi\rho)\,
[h'(\theta^+) - h'(\theta^-)]$ is the Levin/Poincar\'e--Lelong
transfer statement on the slice.

Both approaches converge on the same picture.
The rigidity of the $K3$
atom over moduli and the dynamical protection of its spectrum are one
fact seen from the regional and the chamber perspectives respectively
(Remark~\ref{rem:rigidity}); the analytic program, if completed,
would exhibit the same fact as the stability of the
localized heat-kernel data of the degenerate critical locus.

\section*{Acknowledgments}
This research was conducted with the assistance of Artificial
Intelligence (e.g., literature searches, code generation, the
development of arguments, and typesetting). Pacific Northwest National
Laboratory (PNNL) is a multi-program national laboratory operated for
the U.S. Department of Energy (DOE) by Battelle Memorial Institute
under Contract No.  DE-AC05-76RL01830.


\appendix
\section{The resonant product in detail}\label{app:product}

This appendix carries out, in full, the computations underlying
\S\S\ref{sec:product}--\ref{sec:epsilon}. Throughout,
$X = \PP^1\times\PP^1$ with mirror superpotential
$W(x,y) = x + q_1/x + y + q_2/y$ on $(\CC^\times)^2$, and we write
$m = \sqrt{q}$ for a fixed branch at the resonance $q_1 = q_2 = q$.

\subsection{Critical points and Hessians}\label{app:hess}

In the logarithmic coordinates $u = \log x$, $v = \log y$, in which
the holomorphic volume form $\frac{dx}{x}\wedge\frac{dy}{y} = du
\wedge dv$ is translation-invariant,
\[
  W = e^u + q_1 e^{-u} + e^v + q_2 e^{-v}, \qquad
  \partial_u W = e^u - q_1 e^{-u}, \qquad
  \partial_u^2 W = e^u + q_1 e^{-u},
\]
and similarly in $v$; the mixed derivative vanishes. Critical points
are $e^u = s_1\sqrt{q_1}$, $e^v = s_2\sqrt{q_2}$ with
$s_i \in \{\pm\}$. At the resonance,
\[
  \partial_u^2 W\big|_{x = s_1 m} = s_1 m + q\,(s_1 m)^{-1}
  = 2 s_1 m,
\]
so at the vacuum labeled $(s_1, s_2)$,
\begin{equation}\label{eq:app-hess}
  \lambda_{(s_1,s_2)} = 2m(s_1 + s_2), \qquad
  \det \operatorname{Hess}_{\log} W = (2s_1 m)(2 s_2 m)
  = 4\, s_1 s_2\, m^2 ,
\end{equation}
reproducing the table of \S\ref{sec:product}: outer vacua
$(\pm,\pm)$ with $\lambda = \pm 4m$ and positive Hessian determinant
$4m^2$; middle vacua $(\mp,\pm)$ with $\lambda = 0$ and
\emph{negative} determinant $-4m^2$. The middle critical points are
nondegenerate --- only the critical \emph{values} coincide.

\subsection{Saddle asymptotics and the mass/phase table}
\label{app:saddle}

For the thimble $\Gamma$ through a nondegenerate critical point
$p$ with value $\lambda$, stationary phase in the coordinates
$(u,v)$ gives, along the ray \eqref{eq:ray},
\begin{equation}\label{eq:app-saddle}
  Z_t = \int_\Gamma e^{-W/z}\, du\wedge dv
  \;=\; (2\pi z)\,
  \bigl(\det\operatorname{Hess}_{\log}W(p)\bigr)^{-1/2}
  e^{-\lambda t e^{-i\theta}}\bigl(1 + O(1/t)\bigr),
\end{equation}
the factor $(2\pi z)^{n/2}$ of \eqref{eq:saddle} appearing with
$n = 2$. With $|z| = 1/t$:
\begin{itemize}
\item Outer vacua: $|(2\pi z)(4m^2)^{-1/2}| = \pi/(mt)$, so
$|Z_t| = (\pi/mt)\, e^{\mp 4mt\cos\theta}$ for
$\lambda = \pm 4m$, and lifted phase
$\pi\varphi_t = \arg(\text{prefactor}) - t\,
\mathrm{Im}(\lambda e^{-i\theta}) = \pm 4mt\sin\theta + O(1)$.
\item Middle vacua: $(-4m^2)^{-1/2} = \mp i/(2m)$ (branch fixed by
the thimble orientation), so $Z_t = \mp\, i\pi z/m \cdot
(1 + O(1/t))$: modulus $\pi/(mt)$, decaying \emph{polynomially},
with phase constant up to $O(1/t)$.
\end{itemize}
This is the three-scale mass table of \S\ref{sec:product}:
exponentially light, polynomially massless, exponentially heavy, with
phase slopes $+4m\sin\theta/\pi$, $0$, $-4m\sin\theta/\pi$.

\subsection{The swap symmetry and the degenerate class}
\label{app:swap}

The swap $\tau(x,y) = (y,x)$ is an automorphism of
$\PP^1\times\PP^1$ with $\tau^*\mathcal{O}(1,0) = \mathcal{O}(0,1)$,
and it exchanges the quantum parameters $(q_1, q_2)$. At
$q_1 = q_2$ it preserves the small quantum product (Gromov--Witten
invariants are functorial under automorphisms), hence the quantum
connection, its fundamental solutions, and the
$\widehat{\Gamma}$-integral structure; therefore
\[
  Z_t(\mathcal{O}(0,1)) = Z_t(\tau^*\mathcal{O}(1,0))
  = Z_t(\mathcal{O}(1,0)) \qquad \text{identically in } t.
\]
(The mirror check: the substitution $(x,y) \mapsto (y,x)$ preserves
$W$ at $q_1 = q_2$ and exchanges the two middle thimbles; the sign
from reordering $du \wedge dv$ is absorbed by the induced
orientation-matching of the thimbles, consistently with the
categorical identity above.) Consequently
$Z_t(\delta) \equiv 0$ for
$\delta = [\mathcal{O}(1,0)] - [\mathcal{O}(0,1)]$, and the central
charge factors through $K_0/\langle\delta\rangle$. The support
property then forbids semistable objects of class
$k\delta \neq 0$; the obvious object
$\mathcal{O}(1,0)\oplus\mathcal{O}(0,1)[1]$ of class $\delta$ has
summands of phases $\varphi$ and $\varphi + 1$ and indeed is never
semistable.

\subsection{The K\"unneth tables}\label{app:kunneth}

By K\"unneth,
$\RHom(\mathcal{O}(a,b), \mathcal{O}(c,d)) \cong
H^\bullet(\PP^1, \mathcal{O}(c-a)) \otimes
H^\bullet(\PP^1, \mathcal{O}(d-b))$, and
$H^\bullet(\PP^1, \mathcal{O}(-1)) = 0$. The forward morphisms
(concentrated in degree $0$) and the vanishing backward twists are:
\begin{center}
\footnotesize
\begin{tabular}{llll}
\hline
pair (from, to) & twist & $\RHom$ & \\
\hline
$\mathcal{O} \to \mathcal{O}(1,0)$, $\mathcal{O} \to \mathcal{O}(0,1)$
  & $(1,0)$, $(0,1)$ & $\CC^2$ (deg $0$) & forward \\
$\mathcal{O}(1,0) \to \mathcal{O}(1,1)$,
$\mathcal{O}(0,1) \to \mathcal{O}(1,1)$
  & $(0,1)$, $(1,0)$ & $\CC^2$ (deg $0$) & forward \\
$\mathcal{O} \to \mathcal{O}(1,1)$ & $(1,1)$ & $\CC^4$ (deg $0$) &
  forward \\
$\mathcal{O}(1,0) \leftrightarrow \mathcal{O}(0,1)$
  & $(-1,1)$, $(1,-1)$ & $0$ & orthogonal \\
$\mathcal{O}(1,0) \to \mathcal{O}$, $\mathcal{O}(0,1) \to \mathcal{O}$
  & $(-1,0)$, $(0,-1)$ & $0$ & backward \\
$\mathcal{O}(1,1) \to \mathcal{O}(1,0)$,
$\mathcal{O}(1,1) \to \mathcal{O}(0,1)$
  & $(0,-1)$, $(-1,0)$ & $0$ & backward \\
$\mathcal{O}(1,1) \to \mathcal{O}$ & $(-1,-1)$ & $0$ & backward \\
\hline
\end{tabular}
\end{center}
Every backward or orthogonal twist has a factor $\mathcal{O}(-1)$ in
one tensor slot, hence vanishing $\RHom$; this proves
Lemma~\ref{lem:ext} in tabulated form. In particular
$B = \langle \mathcal{O}(1,0)\rangle \oplus
\langle\mathcal{O}(0,1)\rangle$ is a completely orthogonal pair.

\subsection{Gluing inequalities and the entry time}\label{app:entry}

Since the collection is strong, its shifts
$\mathcal{O}[n_0], \mathcal{O}(1,0)[n_1], \mathcal{O}(0,1)[n_1],
\mathcal{O}(1,1)[n_2]$ form an Ext-exceptional collection precisely
when the shift decreases by at least one across each forward
morphism (\S\ref{subsec:background}):
$n_0 - n_1 \geq 1$ and $n_1 - n_2 \geq 1$, the middle pair imposing
no condition by orthogonality and the skip pair
$\mathcal{O} \to \mathcal{O}(1,1)$ being implied. Normalizing the
heart to $\mathcal{P}((0,1])$, the simples sit at in-heart phases in
$(0,1]$, so the lifted phases satisfy
\[
  \gamma_1(t) = \varphi_t(B) - \varphi_t(\mathcal{O}) \in
  (n_0 - n_1 - 1,\; n_0 - n_1 + 1), \qquad
  \gamma_2(t) \in (n_1 - n_2 - 1,\; n_1 - n_2 + 1),
\]
and the Ext-exceptional (equivalently, Collins--Polishchuk)
condition is exactly $\gamma_1(t) > 1$ and $\gamma_2(t) > 1$. By
Appendix~\ref{app:saddle} both gaps are affine in $t$ with common
slope $s = 4m|\sin\theta|/\pi$ (Convention~\ref{conv:P1} orientation)
up to $O(1/t)$:
$\gamma_i(t) = s\,t + \gamma_i(0) + O(1/t)$, whence the entry time
\[
  t_0 = \frac{1}{s}\max\bigl(1 - \gamma_1(0),\, 1 -
  \gamma_2(0)\bigr)\,\bigl(1 + O(1/t_0)\bigr)
\]
of Proposition~\ref{prop:prod-entry}, the offsets $\gamma_i(0)$
being the prefactor arguments of Appendix~\ref{app:saddle} (fixed by
the $\widehat{\Gamma}$-integral normalization, which we leave
symbolic).

\subsection{The walls and their Jordan--H\"older data}
\label{app:walls}

\emph{Channel $\mathcal{O} \leftrightarrow B$.} A section of
$\mathcal{O}(1,0)$ vanishes on a divisor $D_1 \in |h_1|$, a vertical
line $\{p\}\times\PP^1$, giving
$0 \to \mathcal{O} \to \mathcal{O}(1,0) \to
\mathcal{O}(1,0)|_{D_1} \to 0$; since $h_1 \cdot h_1 = 0$ the
restriction has degree zero, so the cokernel is
$\mathcal{O}_{D_1}$, of class $[\mathcal{O}(1,0)] - [\mathcal{O}]$
and Chern character $h_1$. Rotating gives the triangle
$\mathcal{O}(1,0) \to \mathcal{O}_{D_1} \to \mathcal{O}[1]$. At the
wall $\gamma_1 = 1$ the phases of $\mathcal{O}(1,0)$ and
$\mathcal{O}[1]$ align, $Z(\mathcal{O}_{D_1}) = Z(\mathcal{O}(1,0))
+ Z(\mathcal{O}[1])$ with equal arguments, and $\mathcal{O}_{D_1}$
is strictly semistable with JH factors
$\{\mathcal{O}(1,0), \mathcal{O}[1]\}$. The horizontal line sheaves
$\mathcal{O}_{D_2}$, $D_2 \in |h_2|$, of class
$[\mathcal{O}(0,1)] - [\mathcal{O}]$, satisfy the mirror-image
statement; by the exact locking of Appendix~\ref{app:swap} their
phases coincide with those of the $\mathcal{O}_{D_1}$ \emph{for all}
$t$, so both families die at the same wall: multiplicity two.

\emph{Channel $B \leftrightarrow \mathcal{O}(1,1)$.} A section of
$\mathcal{O}(0,1)$ vanishes on $D' \in |h_2|$, giving
$0 \to \mathcal{O}(1,0) \to \mathcal{O}(1,1) \to
\mathcal{O}(1,1)|_{D'} \to 0$ with restriction of degree
$h_1 \cdot h_2 = 1$: the cokernel is $\mathcal{O}_{D'}(1)$, of class
$[\mathcal{O}(1,1)] - [\mathcal{O}(1,0)]$, dying at
$\gamma_2 = 1$ with JH factors
$\{\mathcal{O}(1,1), \mathcal{O}(1,0)[1]\}$; again doubled by the
locking.

\subsection{The deformation: exact splitting, two clocks, wall
splitting}\label{app:eps}

Set $q_1 = q e^{i\varepsilon}$, $q_2 = q$. The critical values
become $\lambda_{(s_1,s_2)} = 2m(s_1 e^{i\varepsilon/2} + s_2)$
exactly, so the middle splitting is
\[
  \Delta\lambda_B = \lambda(\mathcal{O}(0,1)) -
  \lambda(\mathcal{O}(1,0)) = 4m\bigl(e^{i\varepsilon/2} - 1\bigr)
  = 2im\varepsilon - \tfrac12 m\varepsilon^2 -
  \tfrac{i}{24} m\varepsilon^3 + O(\varepsilon^4),
\]
reproducing \eqref{eq:splitting}. Projecting onto the ray:
\begin{align*}
  \mathrm{Im}\bigl(\Delta\lambda_B\, e^{-i\theta}\bigr)
  &= 2m\varepsilon\cos\theta + \tfrac12 m\varepsilon^2\sin\theta
  + O(\varepsilon^3)
  && \Rightarrow \quad s_f = \tfrac{1}{\pi}\,
  \bigl|2m\varepsilon\cos\theta +
  \tfrac12 m\varepsilon^2\sin\theta\bigr|, \\
  \mathrm{Re}\bigl(\Delta\lambda_B\, e^{-i\theta}\bigr)
  &= 2m\varepsilon\sin\theta - \tfrac12 m\varepsilon^2\cos\theta
  + O(\varepsilon^3),
\end{align*}
the phase clock \eqref{eq:phaseclock} and (twice) the mass clock
\eqref{eq:massclock}: individually, the middle values
$\lambda \approx \pm i m\varepsilon$ give per-object mass rates
$\mathrm{Re}(\pm im\varepsilon\, e^{-i\theta}) = \pm
m\varepsilon\sin\theta$, so the two middle masses separate at
relative exponential rate $2m\varepsilon|\sin\theta|$ while each
drifts at $r_f = m\varepsilon|\sin\theta|$, the convention of
\eqref{eq:massclock}. The internal gap
$g_B(t) = s_f\, t + O(\varepsilon)$ reaches $1$ at
\[
  t_1 = \frac{\pi}{2m\varepsilon|\cos\theta|}
  \bigl(1 + O(\varepsilon)\bigr) \quad (\cos\theta \neq 0),
  \qquad
  t_1 = \frac{2\pi}{m\varepsilon^2|\sin\theta|}
  \bigl(1 + O(\varepsilon)\bigr) \quad (\cos\theta = 0),
\]
the second case because the leading imaginary part of
$\Delta\lambda_B$ drops out of the phase projection at
$\cos\theta = 0$ and the $O(\varepsilon^2)$ real part takes over ---
formula \eqref{eq:t1}. \emph{Wall splitting:} at the coarse wall
time $t_0 = O(1)$ the internal gap is already
$g_B(t_0) = s_f t_0 + O(\varepsilon) = O(\varepsilon) \neq 0$, so
the two channel-$1$ alignments (for $\mathcal{O}_{D_1}$ and
$\mathcal{O}_{D_2}$) occur at times differing by
$\Delta t = g_B(t_0)/s + O(\varepsilon^2) = O(\varepsilon)$, and
likewise in channel $2$: the two multiplicity-two walls of
Appendix~\ref{app:walls} unfold into four simple walls in two
$\varepsilon$-close pairs. \emph{Flicker times:} objects of mixed
middle class are direct sums $E_1 \oplus E_2$ with $E_i$ in the two
orthogonal summands; such a sum is semistable exactly when the
phases align modulo $2\ZZ$ of shifts, i.e.\ at the discrete times
$\{t : g_B(t) \in 2\ZZ\}$, spaced $2/s_f$ apart, and is never
stable --- the content of Proposition~\ref{prop:not-a-wall}.

\section{The determinant--period check at leading order}\label{app:det}

We verify the consistency check (i) of \S\ref{subsec:det}, step (2),
at leading order in $\hb$, on the symmetric slice of
\S\ref{subsec:A2bps}. Fix $b = 0$ and treat $a$ as the spectral
parameter of the lateral problem for
$\hb^2 y'' = (x^3 + ax)\,y$, with boundary conditions subdominant
along the positive real axis (the sector $S_0$, reached through the
barrier $(s,\infty)$) and in the sector $S_2$ reached across the
finite barrier (the conjugate choice $S_{-2}$ yields the conjugate
determinant): a resonance-type problem. By \cite{Sib} the
corresponding Stokes multiplier $D(a; \hb)$ is an entire function of
$a$; its zeros $a_n$ lie on the negative real axis to all orders in
$\hb$, with nonperturbatively small widths
$\Im a_n = O(e^{-\beta|a_n|^{5/4}/\hb})$
(see \S\ref{subsec:det-numerics}); it is the spectral determinant, and by the spectral
zeta theory of Voros \cite{Vor} it coincides with the
zeta-regularized determinant up to a factor
$\exp(\text{polynomial in } a)$ --- the same elementary-factor
ambiguity as in the $\tau = \tau_{\mathrm{PI}}$ identification of
\cite{BrMa}; this ambiguity is quantified in
\S\ref{subsec:det-elem}. Write
$\beta = B(\tfrac34, \tfrac32) \approx 0.958512$ for the period
constant of \eqref{eq:beta-period}, so that
$Z(\gamma_1)(a) = \beta|a|^{5/4}$ and
$Z(\gamma_2)(a) = i\beta|a|^{5/4}$ for $a < 0$.

\subsection*{Route $\alpha$: zero density to growth}
The eigenvalues quantize the oscillatory cycle:
$|Z(\gamma_2)(a_n)| = 2\pi\hb(n + \tfrac12) + O(\hb^2)$, \ie\
$\beta|a_n|^{5/4} = 2\pi\hb(n + \tfrac12)$ with $a_n$ on the
negative real axis, giving the counting function
$N(A) = \#\{|a_n| \leq A\} \sim c\,A^\rho$ with
$c = \beta/2\pi\hb$ and $\rho = 5/4$. (Cross-check of the order:
$\rho = \mathrm{wt}(Z)/\mathrm{wt}(a) = \tfrac{5/2}{2}$; in the
$b$-direction the same reasoning gives Sibuya's classical order
$\tfrac{5/2}{3} = \tfrac56$.) Since $\rho \in (1,2)$, $D$ is a
genus-one canonical product over the $a_n$, and the standard
Lindel\"of computation --- substitute $t = au$ in
$\int_0^\infty \log(1 + a/t)\, dN(t)$, integrate by parts, and use
the Mellin integral
$\int_0^\infty u^{\rho-1}(1+u)^{-1}du = \pi/\sin\pi\rho$, continued
to $\rho = \tfrac54$ through the genus-one subtraction --- yields,
for $a \to +\infty$ along the positive axis,
\begin{equation}\label{eq:app-lindelof}
  \log D(a) \;\sim\; \frac{\pi}{\sin \pi\rho}\; c\, a^{\rho}
  \;=\; -\pi\sqrt{2}\cdot\frac{\beta}{2\pi\hb}\, a^{5/4}
  \;=\; -\frac{\beta}{\sqrt{2}}\cdot\frac{a^{5/4}}{\hb}.
\end{equation}

\subsection*{Route $\beta$: analytic continuation of the periods}
The growth of a Stokes multiplier is the exponential of the Agmon
action between the sectors it connects: at leading order, one
\emph{half}-period per vanishing cycle adjacent to the boundary
sectors. Continuing from $a < 0$ to $a > 0$, the turning points
become $\{0, \pm i\sqrt{a}\}$ and the oscillatory cycle splits into
a complex-conjugate pair. Substituting $x = i\sqrt{a}\,u$,
\[
  x^3 + ax = i a^{3/2} u(1 - u^2), \qquad
  \sqrt{x^3 + ax}\; dx = e^{3i\pi/4} a^{5/4}
  \sqrt{u(1-u^2)}\; du,
\]
so the continued periods are
$Z_\pm = e^{\pm 3i\pi/4}\, \beta\, a^{5/4}$, and the real problem's
suppression sees both cycles symmetrically:
\begin{equation}\label{eq:app-agmon}
  \log D(a) \;\sim\; \frac{1}{2\hb}\bigl(Z_+ + Z_-\bigr)
  \;=\; \frac{\beta a^{5/4}}{\hb}\cos\tfrac{3\pi}{4}
  \;=\; -\frac{\beta}{\sqrt{2}}\cdot\frac{a^{5/4}}{\hb}.
\end{equation}

\subsection*{The agreement}
\eqref{eq:app-lindelof} and \eqref{eq:app-agmon} match exactly,
including the sign, from entirely different inputs: the
trigonometric factor $\pi/\sin\tfrac{5\pi}{4}$ of the zero-counting
integral \emph{is} the cosine produced by continuing the
Beta-period around the parameter space --- the counting integral
knows the monodromy of the elliptic curve. The leading-order
dictionary is therefore
\begin{equation}\label{eq:app-dict}
  \log D(a,b;\hb) \;=\; \frac{1}{2\hb}
  \sum_{\gamma \in V(D)} Z(\gamma)(a,b) \;+\; O(\log \hb^{-1}),
\end{equation}
where $V(D)$ is the set of vanishing cycles adjacent to the
determinant's boundary sectors, continued along the parameter ray;
monodromy of the parameters permutes $V(D)$ but preserves the sum,
consistently with single-valuedness. Differentiating
\eqref{eq:app-dict} under the period integral,
$\partial_b Z(\gamma) = \tfrac12 \oint_\gamma dx/\sqrt{Q_0}$, gives
the derivative-level statement
\[
  \partial_b \log D \;\sim\; \frac{1}{4\hb}
  \sum_{\gamma \in V(D)} \oint_\gamma \frac{dx}{\sqrt{Q_0}} :
\]
the growth of $\partial_b \log D$ is governed by the periods of the
\emph{first kind}. A third, independent data point: evaluating the
canonical product \emph{on} the zero axis (Levin's formula,
$\log|P(-r)| \sim \pi c\, r^\rho \cot\pi\rho$, and
$\cot\tfrac{5\pi}{4} = 1$) gives
$\log|D(a)| \sim \beta|a|^{5/4}/\hb = |Z(\gamma_1)|/\hb$ for
$a \to -\infty$: on that side the visible cycle set $V(D)$ is the
single real cycle $\gamma_1$ with weight one --- the full period,
as in \eqref{eq:det-singlecycle}; the corner height
$h(\pi) = \beta$ is confirmed by the wall map of
\S\ref{sec:outlook}. (The
growth-side constant, and with it the modulus of the cycle content
of $V(D)$
for $a \to +\infty$, is confirmed to seven digits in
\S\ref{subsec:det-numerics}.)

\subsection*{The symplectic constant}
The derivative statement above is the entry point to the family
geometry. Writing $A_i = \oint_{\gamma_i} dx/y$ and
$B_i = \oint_{\gamma_i} x\,dx/y$ on $E_{a,b}\colon y^2 = Q_0(x)$, so
that $dZ_i = \tfrac12(B_i\, da + A_i\, db)$, we claim
\begin{equation}\label{eq:app-symp}
  dZ(\gamma_1) \wedge dZ(\gamma_2)
  \;=\; \tfrac14\bigl(B_1 A_2 - A_1 B_2\bigr)\, da\wedge db
  \;=\; \pm\, 2\pi i \;\, da \wedge db ,
\end{equation}
the sign being the orientation of the symplectic basis
$\langle\gamma_1,\gamma_2\rangle = 1$: the period map
$(a,b) \mapsto (Z_1, Z_2)$ identifies the unfolding space, with its
constant symplectic form, with the central-charge space carrying
$dZ_1\wedge dZ_2$ --- the symplectic structure underlying the Joyce
formalism \cite{BrTau}, produced here from the determinant side.

\emph{Proof by residue.} By the Riemann bilinear relation for the
first-kind $\omega = dx/y$ against the second-kind
$\eta = x\,dx/y$ (double pole at $\infty$, no residue),
$A_1 B_2 - A_2 B_1 = 2\pi i \operatorname{Res}_\infty(F\eta)$ with
$dF = \omega$. In the local coordinate $t$ at infinity with
$x = t^{-2}$, $y = t^{-3}(1 + \tfrac{a}{2}t^4 +
\tfrac{b}{2}t^6 + \cdots)$:
\[
  \omega = -2\bigl(1 - \tfrac{a}{2}t^4 - \cdots\bigr)dt, \quad
  F = -2t + \tfrac{a}{5}t^5 + \cdots, \quad
  \eta = t^{-2}\omega = \bigl(-2t^{-2} + a t^2 + \cdots\bigr)dt,
\]
so the $t^{-1}$-coefficient of $F\eta$ is $(-2t)(-2t^{-2}) = 4/t$:
$\operatorname{Res} = 4$, giving $|B_1A_2 - A_1B_2| = 8\pi$ and
\eqref{eq:app-symp}. \emph{Check at the equianharmonic point}
($b = 0$): the periods evaluate to Beta functions,
$|A_i| = s^{-1/2}B(\tfrac14,\tfrac12)$,
$|B_i| = s^{1/2}B(\tfrac34,\tfrac12)$, and the two cross terms add
with equal phases, so the bilinear combination has modulus
$2\,B(\tfrac14,\tfrac12)B(\tfrac34,\tfrac12) =
2\,\Gamma(\tfrac14)\Gamma(\tfrac12)^2\Gamma(\tfrac14)^{-1}\cdot 4
= 8\pi$: the Beta identity
$B(\tfrac14,\tfrac12)B(\tfrac34,\tfrac12) = 4\pi$ \emph{is} the
Legendre relation at this point, and it reproduces the residue.

\emph{Flatness at leading order.} The Agmon connection form
$\theta = \tfrac{1}{2\hb}\sum_{V(D)} dZ(\gamma)$ appearing in
\eqref{eq:app-dict} is exact where $V(D)$ is locally constant, so
the curvature of the determinant line vanishes at order
$\hb^{-1}$; all of the order-$\hb^{-1}$ geometry resides in the
\emph{jumps} of $V(D)$ across the parameter Stokes rays, where
Picard--Lefschetz monodromy around $\Delta = 0$ acts by
$\gamma \mapsto \gamma \pm \langle\gamma,\gamma'\rangle\gamma'$ and
$\theta$ jumps by $\pm\tfrac{1}{2\hb}dZ(\gamma')$: a flat connection
with wall-crossing monodromy, measured by the symplectic pairing
\eqref{eq:app-symp} --- the leading-order shadow of the
Kontsevich--Soibelman structure, exactly as the Joyce-structure
picture requires.

\subsection*{The logarithmic coefficient vanishes}
The $O(\log\hb^{-1})$ term of \eqref{eq:app-dict} is absent for the
cubic: $\kappa_{\log} = 0$. (1) Since
$\deg Q_0 = 3$ is odd, the expansion
$\sqrt{Q_0} = x^{3/2} + \tfrac{a}{2}x^{-1/2} +
\tfrac{b}{2}x^{-3/2} - \tfrac{a^2}{8}x^{-5/2} + \cdots$ contains
only half-integer powers, never $x^{-1}$: the action primitive
$\tfrac25 x^{5/2} + a x^{1/2} - b x^{-1/2} + \cdots$ has no
logarithm, so Sibuya's normalization at infinity is log-free.
(2) The turning-point (Airy) factors carry the $\hb$-powers of the
connection formulas, and they cancel in the
$Q_0^{-1/4}$-normalized Wronskian ratios defining the multiplier.
(3) On the Riemann--Hilbert side, the prescribed asymptotics
$x_\gamma(\hb)e^{Z(\gamma)/\hb} \to \xi_\gamma$ of
\S\ref{subsec:BM} have \emph{constant} limits $\xi_\gamma$ --- no
logarithms --- so a nonzero $\kappa_{\log}$ would be inconsistent
with the solved RH problem. Consequently
$\log D = \tfrac{1}{2\hb}\sum_{V(D)} Z + O(1)$, the $O(1)$ term
being the logarithm of Sibuya's connection constant, and the
flatness statement above persists to order $\hb^0$. The contrast
with the Fano side is instructive: there the
$(\dim X/2 - m)\log w$ terms of \cite[(3.9)]{KRZ} originate in the
$(2\pi z)^{n/2}$ torus-volume prefactor of the oscillatory
integrals, a feature of the $n$-dimensional mirror absent from the
one-dimensional quantum mechanics.

\end{document}